\definecolor{darkgreen}{rgb}{0.0, 0.7, 0.0}
\newcommand{\comp}{\vartriangleleft}
\newcommand{\compflip}{\vartriangleright}
\newcommand{\lat}{\mathfrak{L}}
\newtheorem{question}[thm]{Question}
\newtheorem{convention}[thm]{Convention}
\newenvironment{manualtheorem}[1]{%
  \manualtheoreminner
}{\endmanualtheoreminner}
\newenvironment{manualprop}[1]{%
  \manualpropinner
}{\endmanualtheoreminner}
\begin{document}

\begin{frontmatter}
  \title{Compatibility and accessibility:\\lattice representations for semantics\\of non-classical and modal logics}

  \author{Wesley H. Holliday}
  \address{University of California, Berkeley}
  
    \subtitle{{\footnotesize Published in \textit{Advances in Modal Logic}, Vol.~14, 2022, 507-529.}}
 
 \begin{abstract} In this paper, we study three representations of lattices by means of a set with a binary relation of compatibility in the tradition of Plo\v{s}\v{c}ica. The standard representations of complete ortholattices and complete perfect Heyting algebras drop out as special cases of the first representation, while the second covers arbitrary complete lattices, as well as complete lattices equipped with a negation we call a protocomplementation. The third topological representation is a variant of that of Craig, Havier, and Priestley. We then extend each of the three representations to lattices with a multiplicative unary modality; the representing structures,  like so-called graph-based frames, add a second relation of accessibility interacting with compatibility. The three representations generalize possibility semantics for classical modal logics to non-classical modal logics, motivated by a recent application of modal orthologic to natural language semantics.\end{abstract}
   
  \begin{keyword}
lattices, representation theorems, ortholattices, orthologic, Heyting algebras, intuitionistic logic, Boolean algebras, modal logic, negation, graph-based frames, possibility semantics
  \end{keyword}
 \end{frontmatter}
 
 \makeatletter
\def\@title{Compatibility and accessibility}
\makeatother 
 
\section{Introduction}\label{Intro}

Semantics for non-classical and modal logics may be seen as arising from more basic algebraic representation theorems. For example, traditional semantics for intuitionistic logic, orthologic, and classical modal logic may be understood in terms of the following well-known representations:
\begin{itemize}
\item Any Heyting algebra $H$ embeds into the lattice of downsets of a poset, and the embedding is an isomorphism if $H$ is complete and its completely join-irreducible elements are join-dense (see, e.g., \cite{Esakia2019}, \cite[Prop.~1.1]{Davey1979}).
\item Any ortholattice $L$ embeds into the lattice of $\bot$-closed sets of an orthoframe $(X,\bot)$ equipped  with an orthocomplementation $\neg$ induced by  the relation~$\bot$, and the embedding is an isomorphism if $L$ is complete (see \cite{Goldblatt1974} for the orthoframe description and \cite[\S\S~32-4]{Birkhoff1940}, \cite{Dishkant1972} for other descriptions).
\item Any Boolean algebra $B$ equipped with a multiplicative unary operation $\Box$ embeds into the powerset of a set $W$ equipped with an operation $\Box_R$ induced by a binary relation $R$ on $W$, and the embedding is an isomorphism if $B$ is complete and atomic and $\Box$ completely multiplicative (see \cite{Tarski1935,Jonsson1952a,Thomason1975}).
\end{itemize}
In each case, adding topology to the relevant relational structures allows one to characterize topologically the image of the relevant embedding \cite{Esakia2019,Goldblatt1975a,Bimbo2007,Goldblatt1974a,Sambin1988}.

Here we study representations that subsume and go beyond all of those mentioned above. In \S~\ref{FramesToLattices}, we explain how to go from a set together with a binary relation of ``compatibility'' to a complete lattice. In \S~\ref{LatticesToFrames}, we study three ways of going back: one economical representation of certain complete lattices, including but not limited to Heyting and ortholattice cases; one less economical but fully general representation of complete lattices, including complete lattices equipped with a type of negation that we call a protocomplementation; and one representation of arbitrary lattices. In \S~\ref{ModalCase}, we extend the three representations to lattices with a multiplicative unary modality $\Box$, by adding a second relation of accessibility interacting with compatibility. We conclude in \S~\ref{Conclusion}.

After writing this paper, I discovered Plo\v{s}\v{c}ica's \cite{Ploscica1993} representation of bounded lattices using certain compatibility frames\footnote{Also see \cite{Craig2015} for \textit{TiRS graphs}, which are compatibility frames with extra properties, which we do not require here (as is crucial for a number of our results and for Conjecture \ref{Conj}).} as in Definition \ref{CompFrames} with a topology, as well as Craig et al.'s~\cite{Craig2013} modification of Plo\^s\^cica's approach. In \S~\ref{SubRepSec}, we briefly cover a variant of this representation with a different topology. In addition, a referee informed me that the addition of modal accessibility interacting with compatibility  (Definition \ref{CAframes}) appears in the \textit{graph-based frames} of Conradie et al.~\cite{Conradie2019}. We will return to this connection in \S~\ref{ModalCase}.

A Jupyter notebook with code to verify examples and investigate conjectures and questions is available at \href{https://github.com/wesholliday/compat-frames}{github.com/wesholliday/compat-frames}.

\section{From compatibility frames to lattices}\label{FramesToLattices}

\subsection{Basic concepts}

Our starting point is a certain way of going from a set with a binary relation to a complete lattice. For a comparison with other ways of realizing complete lattices using doubly ordered structures and polarities, see \cite{Holliday2021}.

\begin{definition}\label{CompFrames} A \textit{relational frame} is a pair $\mathcal{F}=(X,\comp)$ where $X$ is a nonempty set and $\comp$ is a binary relation on $X$. A \textit{compatibility frame} is a relational frame in which $\comp$ is reflexive.
\end{definition}
\noindent We read  $x\comp y$ as ``$x$ is compatible with $y$,'' also written $y\compflip x$.\footnote{In \cite{Holliday2021}, we wrote $x\between y$ and $y\between^{-1}x$ instead of $x\comp y$ and $y\compflip x$, respectively.} 

\begin{convention}\label{DiagramConv} \textnormal{In diagrams, such as Fig.~\ref{FirstFig}, an arrow with a triangle arrowhead from $y$ to $x$ indicates $y\compflip x$. Thus, we draw the directed graph $(X,\compflip)$ to represent the compatibility frame $(X,\comp)$. Reflexive loops are not shown.}
\end{convention}

Recall that a unary operation on a lattice  is a \textit{closure operator} if $c$ is inflationary ($x\leq c(x)$), idempotent ($c(c(x))=c(x)$), and monotone ($x\leq y$ implies $c(x)\leq c(y)$). We will use the compatibility relation $\comp$ to define a closure operator on $\wp(X)$, whose fixpoints give us a complete lattice as in the following classic result (see, e.g., \cite[Thm.~5.2]{Burris1981}).

\begin{proposition}\label{ClosureLattice} Let $X$ be a nonempty set and $c$ a closure operator on $\wp(X)$. Then the fixpoints of $c$, i.e., those $A\subseteq X$ with $c(A)=A$, ordered by $\subseteq$ form a complete lattice with
\[\underset{i\in I}{\bigwedge}{A_i} =\underset{i\in I}{\bigcap}{A_i}\mbox{ and } \underset{i\in I}{\bigvee}{A_i} =c(\underset{i\in I}{\bigcup}{A_i}).\]
\end{proposition}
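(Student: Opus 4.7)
The plan is to verify directly the two formulas for meets and joins among the fixpoints of $c$, from which the complete lattice structure follows immediately. Write $\mathcal{C}=\{A\subseteq X\mid c(A)=A\}$.

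First I would handle meets. Given a family $\{A_i\}_{i\in I}\subseteq \mathcal{C}$, set $A=\bigcap_{i\in I} A_i$. Inflation gives $A\subseteq c(A)$, and for the reverse inclusion, monotonicity yields $c(A)\subseteq c(A_i)=A_i$ for each $i$, hence $c(A)\subseteq A$. Thus $A\in\mathcal{C}$, and since $\cap$ is already the greatest lower bound in $\wp(X)$ and all the $A_i$'s and $A$ itself live in $\mathcal{C}$, it is the meet in $\mathcal{C}$ as well. (Taking $I=\emptyset$ gives $c(X)=X$, so $X$ is the top element.)

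Next I would handle joins. Let $B=c(\bigcup_{i\in I} A_i)$. By idempotence $c(B)=B$, so $B\in\mathcal{C}$. Inflation gives $A_i\subseteq\bigcup_j A_j\subseteq B$ for each $i$, so $B$ is an upper bound in $\mathcal{C}$. If $C\in\mathcal{C}$ is any other upper bound, then $\bigcup_i A_i\subseteq C$, and monotonicity plus $c(C)=C$ give $B=c(\bigcup_i A_i)\subseteq c(C)=C$. Hence $B$ is the least upper bound.

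Since $(\mathcal{C},\subseteq)$ is a poset in which every family has a meet (equivalently, every family has a join by the argument above), it is a complete lattice with the displayed formulas. I do not anticipate any real obstacle: the argument is essentially bookkeeping with the three defining properties of a closure operator, and the only minor subtlety is remembering that while intersections of fixpoints remain fixpoints, unions generally do not, which is why the join requires an application of $c$.
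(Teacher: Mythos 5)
Your proof is correct and complete; the paper itself gives no proof of this proposition, citing it as a classic result (Burris--Sankappanavar, Thm.~5.2), and your argument is exactly the standard one found there: intersections of fixpoints are fixpoints by inflation plus monotonicity, and $c(\bigcup A_i)$ is the least fixpoint above the union by idempotence and monotonicity. Nothing is missing.
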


\begin{definition} Given a relational frame $(X,\comp)$, define $c_\comp: \wp(X)\to\wp(X)$ by
\[c_\comp(A)=\{x\in X\mid \forall x'\comp x\; \exists x''\compflip x':\, x''\in A\}.\] 
\end{definition}
\noindent Thus, $x$ is in $c_\comp(A)$ iff every state compatible with $x$ is compatible with some state in $A$. Given a compatibility frame, we are interested in the $c_\comp$-fixpoints, i.e., those $A\subseteq X$ such that $c_\comp(A)=A$. Looking at a diagram of a compatibility frame, one can check that $c_\comp(A)=A$ by checking that the following holds: 
\begin{itemize}
\item from any $x\in X\setminus A$, you can step forward along an arrow to a state $x'$ that cannot step backward along an arrow into $A$. 
\end{itemize}
Informally, ``from $x$ you can see a state that cannot be seen from $A$.'' 

\begin{example} Consider the cycle on three elements on the left of Fig.~\ref{FirstFig}, regarded as a compatibility frame according to Convention \ref{DiagramConv}: $\{y\}$ is a {$c_\comp$-fixpoint} because $z$ and $x$ can both see $x$, which cannot be seen from $\{y\}$. Yet  $\{y,z\}$ is not a $c_\comp$-fixpoint, because  $x$ cannot see a state that cannot be seen from $\{y,z\}$, since both $x$ and $y$ can be seen from $\{y,z\}$.

We get the reverse verdicts on $\{y\}$ and $\{y,z\}$ in the acyclic (ignoring loops) but non-transitive frame on the right of Fig.~\ref{FirstFig}: $\{y\}$ is \textit{not} a $c_\comp$-fixpoint, because now $z$ cannot see a state that cannot be seen from $\{y\}$; but $\{y,z\}$ \textit{is} a $c_\comp$-fixpoint, because $x$ can see a state, namely $x$, that cannot be seen from $\{y,z\}$.\end{example}

\begin{figure}[h]
\begin{center}
\begin{minipage}{1.1in}
\begin{center}
\begin{tikzpicture}[->,>=stealth',shorten >=1pt,shorten <=1pt, auto,node
distance=2cm,thick,every loop/.style={<-,shorten <=1pt}]
\tikzstyle{every state}=[fill=gray!20,draw=none,text=black]
\node[label=center:$x$,inner sep=0pt,minimum size=.175cm] at (0,0) (D) {}; 
\node[label=center:$y$,inner sep=0pt,minimum size=.175cm] at (1,1) (F) {}; 
\node[label=center:$z$,inner sep=0pt,minimum size=.175cm] at (2,0) (H) {}; 

\path[-{Triangle[open]},draw,thick] (D) to node {{}}  (F);
\path[-{Triangle[open]},draw,thick] (F) to node {{}}  (H);
\path[-{Triangle[open]},draw,thick] (H) to node{{}}  (D);

\path[-, draw=red, opacity=0.5, thick, rounded corners]  (0, .4) -- (.4, .4) -- (.4, -.4) -- (-.4, -.4) -- (-.4, .4) -- (.4, .4) -- (0, .4); 

\path[-, draw=darkgreen, opacity=0.5, thick, rounded corners] (1, 1.4) -- (1.4, 1.4) -- (1.4, .6) -- (.6, .6) -- (.6, 1.4) -- (1.4, 1.4) -- (1, 1.4); 

\path[-, draw=blue, opacity=0.5, thick, rounded corners] (2, .4) -- (2.4, .4) -- (2.4, -.4) -- (1.6, -.4) -- (1.6, .4) -- (2.4, .4) -- (2, .4); 

\end{tikzpicture}
\end{center}
\end{minipage}
\begin{minipage}{1.1in}
\begin{center} 
\begin{tikzpicture}[->,>=stealth',shorten >=1pt,shorten <=1pt, auto,node
distance=2cm,thick,every loop/.style={<-,shorten <=1pt}]
\tikzstyle{every state}=[fill=gray!20,draw=none,text=black]
\node[circle,draw=black!100, label=right:$$,inner sep=0pt,minimum size=.175cm] (1) at (0,0) {{}};
\node[circle,fill=red!50,draw=black!100, label=left:$$,inner sep=0pt,minimum size=.175cm] (x) at (-1,-1) {{}};
\node[circle,fill=darkgreen!50, draw=black!100, label=right:$$,inner sep=0pt,minimum size=.175cm] (y) at (0,-1) {{}};
\node[circle,fill=blue!50,draw=black!100, label=right:$$,inner sep=0pt,minimum size=.175cm] (z) at (1,-1) {{}};
\node[circle,draw=black!100, label=right:$$,inner sep=0pt,minimum size=.175cm] (0) at (0,-2) {{}};

\path (1) edge[-] node {{}} (y);
\path (1) edge[-] node {{}} (x);
\path (1) edge[-] node {{}} (z);
\path (x) edge[-] node {{}} (0);
\path (y) edge[-] node {{}} (0);
\path (z) edge[-] node {{}} (0);

\path (x) edge[<-,dashed,bend right=20, gray] node {{}} (z);
\path (z) edge[<-,dashed,bend right=20, gray] node {{}} (y);
\path (y) edge[<-,dashed,bend right=20, gray] node {{}} (x);

\end{tikzpicture}
\end{center}
\end{minipage}\vrule\;\;\;\begin{minipage}{1.15in}
\begin{center}
\begin{tikzpicture}[->,>=stealth',shorten >=1pt,shorten <=1pt, auto,node
distance=2cm,thick,every loop/.style={<-,shorten <=1pt}]
\tikzstyle{every state}=[fill=gray!20,draw=none,text=black]
\node[label=center:$x$,inner sep=0pt,minimum size=.175cm] at (0,0) (D) {}; 
\node[label=center:$y$,inner sep=0pt,minimum size=.175cm] at (1,1) (F) {}; 
\node[label=center:$z$,inner sep=0pt,minimum size=.175cm] at (2,0) (H) {}; 

\path[-{Triangle[open]},draw,thick] (D) to node {{}}  (F);
\path[-{Triangle[open]},draw,thick] (F) to node {{}}  (H);

\path[-, draw=red, opacity=0.5, thick, rounded corners]  (0, .4) -- (.4, .4) -- (.4, -.4) -- (-.4, -.4) -- (-.4, .4) -- (.4, .4) -- (0, .4); 

\path[-, draw=darkgreen, opacity=0.5, thick, rounded corners] (1.1, 1.5) -- (1.5, 1.5) -- (2.5, .5) -- (2.5, -.5) -- (1.5, -.5) -- (.5, .5)  -- (.5, 1.5) -- (1.5, 1.5) -- (1.1, 1.5); 

\path[-, draw=blue, opacity=0.5, thick, rounded corners] (2, .4) -- (2.4, .4) -- (2.4, -.4) -- (1.6, -.4) -- (1.6, .4) -- (2.4, .4) -- (2, .4); 

\end{tikzpicture}
\end{center}
\end{minipage}
\begin{minipage}{1.1in}
\begin{center}
\begin{tikzpicture}[->,>=stealth',shorten >=1pt,shorten <=1pt, auto,node
distance=2cm,thick,every loop/.style={<-,shorten <=1pt}]
\tikzstyle{every state}=[fill=gray!20,draw=none,text=black]

\node[circle,draw=black!100, label=right:$$,inner sep=0pt,minimum size=.175cm] (n1) at (0,0) {{}};
\node[circle,fill=darkgreen!50,draw=black!100, label=left:$$,inner sep=0pt,minimum size=.175cm] (nx) at (1,-.5) {{}};
\node[circle,fill=blue!50,draw=black!100, label=right:$$,inner sep=0pt,minimum size=.175cm] (ny) at (1,-1.5) {{}};
\node[circle,fill=red!50,draw=black!100, label=right:$$,inner sep=0pt,minimum size=.175cm] (nz) at (-1,-1) {{}};
\node[circle,draw=black!100, label=right:$$,inner sep=0pt,minimum size=.175cm] (n0) at (0,-2) {{}};
\path (nx) edge[-] node {{}} (n1);
\path (nx) edge[-] node {{}} (ny);
\path (n1) edge[-] node {{}} (nz);
\path (ny) edge[-] node {{}} (n0);
\path (nz) edge[-] node {{}} (n0);

\path (ny) edge[->,dashed,gray] node {{}} (nz);

\path (nx) edge[->,dashed,gray] node {{}} (n0);

\path (nz) edge[->,dashed,gray] node {{}} (nx);

\end{tikzpicture}
\end{center}
\end{minipage}
\end{center}
\caption{Two compatibility frames drawn according to Convention \ref{DiagramConv} with their $c_\comp$-fixpoints (except $X$ and $\varnothing$) outlined, followed by their associated lattices.}\label{FirstFig}
\end{figure}

\begin{proposition}\label{IsClosure} For any relational frame, $c_\comp$ is a closure operator on $\wp(X)$.
\end{proposition}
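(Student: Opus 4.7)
I would verify the three defining properties of a closure operator — monotonicity, inflationarity, and idempotence — in that order of difficulty.

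Monotonicity is immediate from the definition: any $x''$ lying in $A$ and witnessing the membership condition for $c_\comp(A)$ automatically lies in any superset $B \supseteq A$. Inflationarity, $A \subseteq c_\comp(A)$, is obtained by taking $x \in A$, fixing any $x' \comp x$, and using $x$ itself as the witness $x''$: since $x' \comp x$ gives $x \compflip x'$ and $x \in A$, the defining condition for $x \in c_\comp(A)$ is met.

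The substantive step is idempotence. The inclusion $c_\comp(A) \subseteq c_\comp(c_\comp(A))$ follows from inflationarity applied to the set $c_\comp(A)$. For the reverse inclusion $c_\comp(c_\comp(A)) \subseteq c_\comp(A)$, I would suppose $x \in c_\comp(c_\comp(A))$ and fix an arbitrary $x' \comp x$; the goal is to produce $x'' \in A$ with $x'' \compflip x'$. Membership of $x$ in the double closure supplies a $y \in c_\comp(A)$ with $y \compflip x'$, equivalently $x' \comp y$. Now feed the same $x'$ into the definition of $y \in c_\comp(A)$: since $x' \comp y$, there exists $u \in A$ with $u \compflip x'$, and this $u$ is the required $x''$.

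The one place where care is needed is in aligning the quantifier directions — specifically, reusing $x'$ as both the witness demanded at the outer unpacking and the universally quantified variable at the inner unpacking, and tracking the flip between $\comp$ and $\compflip$. Notably, reflexivity of $\comp$ plays no role in this particular argument, though it will matter elsewhere in the paper.
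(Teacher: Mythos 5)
Your proof is correct and follows essentially the same route as the paper's: inflationarity and monotonicity are immediate, and the idempotence argument (unpacking the double closure at a fixed $x'\comp x$, obtaining $y\in c_\comp(A)$ with $x'\comp y$, then re-applying the definition at the same $x'$ to extract a witness in $A$) is exactly the paper's chain $x''\compflip x'$, $x'''\compflip x'$. Your remark that reflexivity is not needed is also consistent with the paper, which states the result for arbitrary relational frames.
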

 \begin{proof} That  $Y\subseteq c_\comp (Y)$  and that $Y\subseteq Z$ implies $c_\comp(Y)\subseteq c_\comp (Z)$ are obvious. To see  $c_\comp(c_\comp(Y))\subseteq c_\comp(Y)$, suppose $x\in c_\comp(c_\comp(Y))$ and $x'\comp x$. Hence there is an  $x''\compflip x'$ with $x''\in c_\comp(Y)$. This implies there is an $x'''\compflip x'$ with $x'''\in Y$.  Thus, for any $x'\comp x$ there is an $x'''\compflip x'$ with $x'''\in Y$. Therefore, $x\in c_\comp(Y)$.
\end{proof}

Given Propositions \ref{ClosureLattice} and \ref{IsClosure}, we have the following immediate corollary.

\begin{corollary}\label{FrameToLate} For any relational frame $(X,\comp)$, the $c_\comp$-fixpoints  ordered by $\subseteq$ form a complete lattice $\lat(X,\comp)$ with meet and join as in Proposition \ref{ClosureLattice}.
\end{corollary}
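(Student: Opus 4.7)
The plan is to simply compose the two preceding results. Proposition \ref{IsClosure} establishes that $c_\comp$ is a closure operator on $\wp(X)$ for any relational frame $(X,\comp)$, and Proposition \ref{ClosureLattice} establishes that, for any nonempty set $X$ and any closure operator on $\wp(X)$, the fixpoints under $\subseteq$ form a complete lattice with meets given by intersection and joins given by the closure of the union. Instantiating the latter with $c = c_\comp$ yields the corollary.

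The hypotheses match directly: $X$ is nonempty by the definition of a relational frame, and $c_\comp$ is a closure operator by Proposition \ref{IsClosure}. The meet and join formulas in the statement are exactly the ones produced by Proposition \ref{ClosureLattice} with $c$ replaced by $c_\comp$. The only thing to observe in passing is that $\lat(X,\comp)$ is then well-defined as a notation, since $c_\comp$ is uniquely determined by $\comp$.

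There is no substantive obstacle here; the real content has already been discharged in Proposition \ref{IsClosure}, where inflationarity and monotonicity are immediate from the definition of $c_\comp$ and idempotence requires the short chase through witnesses $x'\comp x$, $x''\compflip x'$, and $x'''\compflip x'$ carried out there. Accordingly, I would present this proof in a single line: apply Proposition \ref{ClosureLattice} to the closure operator $c_\comp$ of Proposition \ref{IsClosure}.
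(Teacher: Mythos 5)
Your proposal is correct and matches the paper exactly: the paper gives no separate proof, stating the corollary as an immediate consequence of Propositions \ref{ClosureLattice} and \ref{IsClosure}, which is precisely the composition you describe. Nothing further is needed.
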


\begin{example}We see in Fig.~\ref{FirstFig} that the $\mathbf{M}_3$ lattice (ignoring the dashed arrows for now) arises from the cycle on three elements, while the $\mathbf{N}_5$ lattice arises from the acyclic but non-transitive frame on three elements.
\end{example}

We can relate Corollary \ref{FrameToLate} to possible world semantics for classical and intuitionistic logic as follows. Let $=_W$ be the identity relation on the set $W$.

\begin{proposition}\label{PWS}$\,$
\begin{enumerate}
\item\label{PWS1} Given any set $W$, the pair $(W,=_W)$ is a compatibility frame, and ${\lat(W,=_W)}$ is the Boolean algebra of all subsets of $W$.
\item\label{PWS2} Given any preorder $\leq$ on a set $P$, the pair $(P,\leq)$ is a compatibility frame, and $\lat(P,\leq)$ is the Heyting algebra of all downsets of $(P,\leq)$.
\end{enumerate}
\end{proposition}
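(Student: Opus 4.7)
The plan is to unwind the definition of $c_\comp$ directly in each case, after first noting that both frames are compatibility frames: $=_W$ is reflexive by definition of identity, and any preorder is reflexive by assumption, so Definition~\ref{CompFrames} applies.

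For part~(\ref{PWS1}), with $\comp$ the identity on $W$, the condition $x'\comp x$ forces $x'=x$, and then $x''\compflip x'$ forces $x''=x$; hence ``$x\in c_\comp(A)$'' collapses to ``$x\in A$'', and every subset of $W$ is a $c_\comp$-fixpoint. Corollary~\ref{FrameToLate} then yields $\lat(W,=_W)=\wp(W)$: the closure operator is the identity, so joins coincide with unions, and one obtains the full power-set Boolean algebra.

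For part~(\ref{PWS2}), I would show that the $c_\comp$-fixpoints are exactly the downsets of $(P,\leq)$. First, if $A$ is a downset and $x\in c_\comp(A)$, then taking $x'=x$ (by reflexivity) produces some $x''\geq x$ with $x''\in A$, whence $x\in A$ because $A$ is downward closed; combined with the inflationary property from Proposition~\ref{IsClosure}, this gives $c_\comp(A)=A$. Conversely, suppose $A$ is a fixpoint with $a\in A$ and $b\leq a$; for every $x'\leq b$ one has $x'\leq a$ by transitivity, so $x''=a$ witnesses $b\in c_\comp(A)=A$, and $A$ is a downset.

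Once fixpoints are identified with downsets, the lattice structure from Corollary~\ref{FrameToLate} matches the downset Heyting algebra automatically: meets are intersections in both, and a union of downsets is already a downset, so the closure in the join formula of Proposition~\ref{ClosureLattice} collapses to ordinary union; the Heyting implication is then determined by the lattice structure in the standard way. The argument is entirely routine and there is no real obstacle; the only point requiring care is tracking which of the two quantifiers $\forall x'\comp x$ and $\exists x''\compflip x'$ calls on reflexivity and which on transitivity in the two directions of the fixpoint/downset equivalence.
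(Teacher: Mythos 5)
Your proposal is correct and follows essentially the same route as the paper: identify the $c_\comp$-fixpoints with the downsets using reflexivity in one direction and transitivity in the other, with part (\ref{PWS1}) treated as the degenerate case where the closure operator is the identity. The only cosmetic difference is that you argue $c_\comp(A)\subseteq A$ directly while the paper argues the contrapositive, and you spell out the (routine) identification of the resulting lattice structure with the downset Heyting algebra, which the paper leaves implicit.
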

Part (ii) appears in \cite[Prop.~4.1.1]{Conradie2020} but we include a proof for convenience. 

\begin{proof} Part (\ref{PWS1}) is obvious. For (\ref{PWS2}), let $A$ be a downset and $x\in P\setminus A$. Setting $x'=x$, we have $x'\comp x$, and for all $x''\compflip x'$, i.e., all $x''\geq x'$, $x''\not\in A$, since $A$ is a downset. Thus, $c_\comp(A)=A$. Conversely, suppose $c_\comp(A)=A$, $x\in A$, and $y\leq x$. Let $y'\comp y$, so $y'\leq y$ and hence $y'\leq x$. Then setting $y''=x$, we have $y'\comp y''\in A$. Since  $c_\comp(A)=A$, it follows that $y\in A$. Thus, $A$ is a downset.\end{proof}

\begin{example} To illustrate part (\ref{PWS2}), if we add to the non-transitive frame in Fig.~\ref{FirstFig} the arrow from $x$ to $z$ required by transitivity, then instead of realizing  $\mathbf{N}_5$, we realize the four-element chain in Fig.~\ref{HeytingFig}, which is a Heyting algebra.
\end{example}

\begin{figure}[h]
\begin{center}
\begin{minipage}{1.5in}
\begin{center}
\begin{tikzpicture}[->,>=stealth',shorten >=1pt,shorten <=1pt, auto,node
distance=2cm,thick,every loop/.style={<-,shorten <=1pt}]
\tikzstyle{every state}=[fill=gray!20,draw=none,text=black]
\node[label=center:$x$,inner sep=0pt,minimum size=.175cm] at (0,0) (D) {}; 
\node[label=center:$y$,inner sep=0pt,minimum size=.175cm] at (1,1) (F) {}; 
\node[label=center:$z$,inner sep=0pt,minimum size=.175cm] at (2,0) (H) {}; 

\path[-{Triangle[open]},draw,thick] (D) to node {{}}  (F);
\path[-{Triangle[open]},draw,thick] (F) to node {{}}  (H);
\path[-{Triangle[open]},draw,thick] (D) to node {{}}  (H);

\path[-, draw=darkgreen, opacity=0.5, thick, rounded corners] (1.1, 1.5) -- (1.5, 1.5) -- (2.5, .5) -- (2.5, -.5) -- (1.5, -.5) -- (.5, .5)  -- (.5, 1.5) -- (1.5, 1.5) -- (1.1, 1.5); 

\path[-, draw=blue, opacity=0.5, thick, rounded corners] (2, .4) -- (2.4, .4) -- (2.4, -.4) -- (1.6, -.4) -- (1.6, .4) -- (2.4, .4) -- (2, .4); 

\end{tikzpicture}
\end{center}
\end{minipage}
\begin{minipage}{1.5in}
\begin{center}
\begin{tikzpicture}[->,>=stealth',shorten >=1pt,shorten <=1pt, auto,node
distance=2cm,thick,every loop/.style={<-,shorten <=1pt}]
\tikzstyle{every state}=[fill=gray!20,draw=none,text=black]

\node[circle,draw=black!100, label=right:$$,inner sep=0pt,minimum size=.175cm] (n1) at (0,0) {{}};
\node[circle,fill=darkgreen!50,draw=black!100, label=left:$$,inner sep=0pt,minimum size=.175cm] (nx) at (0,-.75) {{}};
\node[circle,fill=blue!50,draw=black!100, label=right:$$,inner sep=0pt,minimum size=.175cm] (ny) at (0,-1.5) {{}};
\node[circle,draw=black!100, label=right:$$,inner sep=0pt,minimum size=.175cm] (n0) at (0,-2.25) {{}};
\path (nx) edge[-] node {{}} (n1);
\path (nx) edge[-] node {{}} (ny);
\path (ny) edge[-] node {{}} (n0);

\path (nx) edge[->,bend left=25,dashed,gray] node {{}} (n0);
\path (ny) edge[->,bend right=25,dashed,gray] node {{}} (n0);

\end{tikzpicture}
\end{center}
\end{minipage}

\end{center}
\caption{A compatibility frame realizing a Heyting algebra.}\label{HeytingFig}
\end{figure}
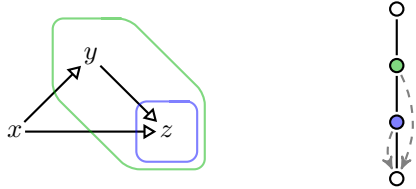

Some appealing aspects of working with downsets of a preorder also apply to our fixpoints; let $^\partial$, $\oplus$, $\overline{\oplus}$, $\dot{\cup}$, and $\times$ be the dual, linear sum, vertical sum, disjoint union, and product  operations  \cite[\S~1.31, \S~1.24, Ex.~1.18, Prop.~1.32]{Davey2002}.

\begin{proposition}\label{Sums} For any relational frames $\mathcal{F}$ and $\mathcal{G}$, \textnormal{(i)} $\lat(\mathcal{F}^\partial)\cong\lat(\mathcal{F})^\partial$; \textnormal{(ii)}  $\lat(\mathcal{F}\oplus \mathcal{G})\cong\lat(\mathcal{F})\,\overline{\oplus}\, \lat(\mathcal{G})$; and \textnormal{(iii)}  $\lat(\mathcal{F}\,\dot{\cup}\, \mathcal{G})\cong\lat(\mathcal{F})\times \lat(\mathcal{G})$.
\end{proposition}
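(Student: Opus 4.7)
For each part the approach is to unfold $c_\comp$ on the constructed frame and reduce it to the component closure operators; part~(i) also benefits from a Galois-connection viewpoint.

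For (iii), since $\mathcal{F}\,\dot{\cup}\,\mathcal{G}$ has no $\comp$-edges between $X_\mathcal{F}$ and $X_\mathcal{G}$, writing $A = A_F \sqcup A_G$ keeps every quantifier in the definition of $c_\comp(A)$ on the side of $z$. Thus $c_\comp(A) = c_{\comp_\mathcal{F}}(A_F)\cup c_{\comp_\mathcal{G}}(A_G)$, and $A \mapsto (A_F, A_G)$ is an order-isomorphism onto $\lat(\mathcal{F})\times\lat(\mathcal{G})$.

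For (ii), the same split requires a case distinction because $\mathcal{F}\oplus\mathcal{G}$ adjoins every pair $(f,g) \in X_\mathcal{F}\times X_\mathcal{G}$ to $\comp$. If $A_G\neq\emptyset$, then $\comp[x']\supseteq X_\mathcal{G}\supseteq A_G$ for every $x'\in X_\mathcal{F}$, so the $c_\comp$-condition is automatic on $X_\mathcal{F}$; on $X_\mathcal{G}$ it reduces to $c_{\comp_\mathcal{G}}(A_G)$, giving $c_\comp(A) = X_\mathcal{F}\cup c_{\comp_\mathcal{G}}(A_G)$. If $A_G=\emptyset$, reflexivity of $\comp_\mathcal{G}$ at $x'=z$ falsifies the condition on $X_\mathcal{G}$, so $c_\comp(A) = c_{\comp_\mathcal{F}}(A_F)\subseteq X_\mathcal{F}$. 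Hence the fixpoints are either $A_F\in\lat(\mathcal{F})$ (with $A_G=\emptyset$) or $X_\mathcal{F}\cup A_G$ with $A_G$ a nonempty element of $\lat(\mathcal{G})$; the single set $X_\mathcal{F}$ plays the role of both $\top_{\lat(\mathcal{F})}$ and the image of $\bot_{\lat(\mathcal{G})}$, yielding $\lat(\mathcal{F})\,\overline{\oplus}\,\lat(\mathcal{G})$.

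For (i), introduce $A^\perp := \{y\in X \mid \forall a\in A,\,\neg(y\comp a)\}$ and its counterpart on $\mathcal{F}^\partial$, namely $B^{\perp^\partial}:=\{z\in X \mid \forall b\in B,\,\neg(b\comp z)\}$. Both $B\subseteq A^\perp$ and $A\subseteq B^{\perp^\partial}$ unfold to the symmetric condition $\forall a\in A\,\forall b\in B:\,\neg(b\comp a)$, so $(\cdot^\perp,\cdot^{\perp^\partial})$ is a contravariant Galois connection on $\wp(X)$. Taking the contrapositive of the defining implication shows $A^{\perp\perp^\partial} = c_\comp(A)$ and symmetrically $B^{\perp^\partial\perp} = c_{\compflip}(B)$; hence $(\cdot)^\perp$ restricts to an order-reversing bijection between $\lat(\mathcal{F})$ and $\lat(\mathcal{F}^\partial)$, establishing $\lat(\mathcal{F}^\partial)\cong\lat(\mathcal{F})^\partial$.

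The main obstacle is the contrapositive identity $A^{\perp\perp^\partial} = c_\comp(A)$ in (i), where the direction of $\comp$ must be tracked carefully through the Galois connection; in (ii), one must also verify that the two fixpoint regimes meet precisely at the single element $X_\mathcal{F}$, producing the vertical (rather than linear) sum.
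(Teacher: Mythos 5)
Your proposal is correct and follows essentially the same route as the paper's (very terse) proof: your map $(\cdot)^\perp$ with inverse $(\cdot)^{\perp^\partial}$ is exactly the paper's isomorphism $\varphi(A)=\{x\mid \forall y\comp x\;\, y\not\in A\}$ for part (i), just derived via the standard Galois-connection facts, and your fixpoint analysis for (ii) reproduces the paper's map $\varphi(A)=A$ or $A\cup X_\mathcal{F}$. One small observation: your argument for (ii) invokes reflexivity of $\comp_\mathcal{G}$, which is indeed needed (the claim can fail for non-reflexive relational frames), so your proof makes explicit that (ii) is really a statement about compatibility frames.
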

\begin{proof} Let $\mathcal{F}=(X,\comp)$, so $\mathcal{F}^\partial=(X,\compflip)$. For an isomorphism $\varphi$ from $\lat(\mathcal{F}^\partial)$ to $\lat(\mathcal{F})^\partial$, let   $\varphi(A)=\{x\in X\mid \forall y\comp x\; y\not\in A \}$. From $\lat(\mathcal{F})\overline{\oplus} \lat(\mathcal{G})$ to $\lat(\mathcal{F}\oplus \mathcal{G})$,  $\varphi(A)=A$ if $A\in \lat(\mathcal{F})$, and $\varphi(A)=A\cup X$ if $A\in \lat(\mathcal{G})$.~Part (iii) is also easy.\end{proof}

We can also relate our approach to that of realizing complete Boolean algebras as in forcing \cite{Takeuti1973} or possibility semantics \cite{Holliday2021b} for classical logic as follows.

\begin{proposition}\label{RegOpen} Given a preordered set $(P,\leq)$, define $\comp$ on $P$ by: $x\comp y$ if $\exists z\in P$:~$z\leq x$ and $z\leq y$. Then $(P,\comp)$ is a compatibility frame, and $\lat(P,\comp)$ is the Boolean algebra $\mathcal{RO}(P,\leq)$ of all regular open downsets of $(P,\leq)$.
\end{proposition}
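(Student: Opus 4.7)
The plan is first to observe the structural properties of the relation $\comp$: since $x\leq x$ witnesses $x\comp x$, the relation is reflexive (so $(P,\comp)$ is a compatibility frame), and it is obviously symmetric, so $\comp$ and $\compflip$ coincide. Then $x\in c_\comp(A)$ unpacks to the condition that \emph{every element $x'$ having a common lower bound with $x$ also has a common lower bound with some element of $A$}. On the other hand, a subset $A\subseteq P$ is a regular open downset in the standard forcing/Alexandrov sense iff (a) $A$ is a downset and (b) whenever $A$ is \emph{dense below} $x$ (i.e.\ every $y\leq x$ has some $z\leq y$ with $z\in A$), then $x\in A$. The core of the proof will be to show that the $c_\comp$-fixpoints coincide with the regular open downsets.

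For one direction I would take $A\in\mathcal{RO}(P,\leq)$ and check $c_\comp(A)\subseteq A$: given $x\in c_\comp(A)$ and any $y\leq x$, the element $y$ satisfies $y\comp x$ (via $y$ itself), so by assumption there is some $x''\in A$ with $y\comp x''$, giving a common lower bound $r\leq y$, $r\leq x''$; since $A$ is a downset, $r\in A$, showing $A$ is dense below $x$, hence $x\in A$ by (b). For the converse, I would suppose $A=c_\comp(A)$ and first verify $A$ is a downset: if $y\leq x\in A$ and $y'\comp y$ with witness $z\leq y, z\leq y'$, then $z\leq x\in A$ and $z\leq y'$ give $y'\comp x$ with $x\in A$, so $y\in c_\comp(A)=A$. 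Then to verify condition (b), suppose $A$ is dense below $x$ and let $x'\comp x$ with witness $z\leq x,z\leq x'$; density supplies $r\leq z$ with $r\in A$, and $r\leq x'$ makes $r$ a witness for $x'\comp r$, whence $x\in c_\comp(A)=A$.

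Once the two fixpoint sets are identified, I would conclude that the complete lattices agree: meets are intersections in both (arbitrary intersections of regular open downsets are themselves regular open downsets in the Alexandrov topology, which falls out of the dense-below characterization, and this matches Proposition~\ref{ClosureLattice}), and joins are the closure of unions taken with respect to the unique closure operator whose fixpoints are these sets, so the join operations automatically coincide as well.

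The main obstacle is really just the bookkeeping of the quantifier alternation translating between ``every $x'\comp x$ sees some $x''\in A$ via $\comp$'' and ``$A$ is dense below $x$''; the trick that makes it work is that a common lower bound $r$ of $x'$ and $x''\in A$ lies in $A$ by downward closure and then itself witnesses $r\leq x'$ directly, so the two-step compatibility chain collapses into the one-step density condition.
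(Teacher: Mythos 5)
Your proposal is correct and follows essentially the same route as the paper: the paper's one-line proof likewise observes that every $c_\comp$-fixpoint is a $\leq$-downset and that, for downsets, the condition $\forall x'\leq x\,\exists x''\leq x':x''\in A$ is equivalent to $\forall x'\comp x\,\exists x''\compflip x':x''\in A$, which is exactly the quantifier bookkeeping you carry out in detail. Your version just spells out both directions of that equivalence explicitly.
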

\begin{proof} Observe that any $c_\comp$-fixpoint is a $\leq$-downset, and for any $\leq$-downset~$A$,  (i) $\forall x'\leq x\,\exists x''\leq x':\, x''\in A$ and (ii)~$\forall x'\comp x\,\exists x''\compflip x':\, x''\in A$ are equivalent. It follows that $A$ is a regular open downset iff $A$ is a $c_\comp$-fixpoint.
\end{proof}

Now $(X,\comp)$ gives us not only $\mathfrak{L}(X,\comp)$ but also an operation $\neg_\comp$ on $\mathfrak{L}(X,\comp)$.

\begin{proposition}\label{NegProp} For any relational frame $(X,\comp)$ and $A\subseteq X$, the set
$\neg_\comp A = \{x\in X\mid \forall y\comp x\;\; y\not\in A \}$
is a $c_\comp$-fixpoint.
\end{proposition}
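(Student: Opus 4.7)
The plan is to show $\neg_\comp A$ equals its own $c_\comp$-closure. Since $c_\comp$ is a closure operator by Proposition \ref{IsClosure}, inflation gives $\neg_\comp A\subseteq c_\comp(\neg_\comp A)$ for free, so the only thing to verify is the reverse inclusion $c_\comp(\neg_\comp A)\subseteq \neg_\comp A$.

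For that inclusion I would argue directly from the definitions. Fix $x\in c_\comp(\neg_\comp A)$; the goal is to show that every $y\comp x$ satisfies $y\notin A$. Given such a $y$, I instantiate the defining universal of $c_\comp$ at the witness $x':=y$: since $y\comp x$, there must exist some $x''\compflip y$ (equivalently $y\comp x''$) with $x''\in\neg_\comp A$. But $x''\in\neg_\comp A$ together with $y\comp x''$ is precisely what forbids $y\in A$, by the very definition of $\neg_\comp A$. Hence $y\notin A$, as desired.

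There is essentially no obstacle here beyond keeping the directions of $\comp$ and $\compflip$ straight; in particular, the argument does not need $\comp$ to be symmetric, because the witness $x''$ is produced with $y\comp x''$ (the same direction of compatibility that appears in the definition of $\neg_\comp A$). One could alternatively package the statement more abstractly by observing that $A\mapsto\neg_\comp A$ is half of a Galois connection between $(\wp(X),\subseteq)$ and $(\wp(X),\supseteq)$ induced by the relation $\comp$, so that its image automatically lies in the fixpoints of the associated closure operator; but since $c_\comp$ has already been introduced concretely, the two-line direct unfolding above is cleanest.
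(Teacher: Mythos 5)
Your proof is correct and takes essentially the same route as the paper's: both come down to the single inclusion $c_\comp(\neg_\comp A)\subseteq \neg_\comp A$ (the other being free from inflationarity) established by a one-step unfolding of the definitions, the only cosmetic difference being that the paper argues contrapositively (from $x\notin\neg_\comp A$ it exhibits an $x'\comp x$ with $x'\in A$, so that no $x''\compflip x'$ lies in $\neg_\comp A$) while you argue directly from $x\in c_\comp(\neg_\comp A)$. Your bookkeeping of the directions of $\comp$ and $\compflip$ is accurate, so the argument goes through as written.
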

\begin{proof} If $x\in X\setminus \neg_\comp A$, then $\exists x'\comp x$ with $x'\in A$, so $\forall x''\compflip x'$, $x''\not\in \neg_\comp A$.
\end{proof}

To characterize the $\neg_\comp$ operation, let us recall some terminology.

\begin{definition} Let $L$ be a bounded lattice and $a\in L$. An $x\in L$ is a \textit{semicomplement} of $a$ if $a\wedge x=0$, a \textit{complement} of $a$ if $a\wedge x=0$ and $a\vee x=1$, and a \textit{pseudocomplement} of $a$ if $x$ is the maximum in $L$ of $\{y\in X\mid a\wedge y=0\}$. 

A unary operation $\neg$ on $L$ is a \textit{semicomplementation} (resp.~\textit{complementation}, \textit{pseudocomplementation}) if for all $a\in L$, $\neg a$ is a semicomplement (resp.~complement, pseudocomplement) of $a$. It is \textit{antitone} if for all $a,b\in L$, $a\leq b$ implies $\neg b\leq \neg a$, \textit{involutive} if $\neg\neg a=a$ for all $a\in L$, and  \textit{anti-inflationary} if $a\not\leq \neg a$ for all nonzero $a\in L$.  An \textit{ortholattice} is a bounded lattice equipped with an involutive antitone complementation, called an \textit{orthocomplementation}. A \textit{p-algebra} is a bounded lattice equipped with a pseudocomplementation. 

Finally, for a non-standard piece of terminology, we say $\neg$ is a \textit{protocomplementation} if $\neg$ is an antitone semicomplementation such that $\neg 0=1$.

\end{definition}

An antitone $\neg$ is anti-inflationary iff it is a semicomplementation. Also recall that the operation in a Heyting algebra $H$ defined by $\neg a=a\to 0$ is a pseudocomplementation, so $H$ may also be regarded as a p-algebra; and the complementation in a Boolean algebra $B$ is an orthocomplementation, so $B$ is an ortholattice. As for the operation $\neg_\comp$, the following is easy to check.

\begin{proposition} For any compatibility \textnormal{(}resp.~relational\textnormal{)} frame $(X,\comp)$,   $\neg_\comp$  is a protocomplementation \textnormal{(}resp.~antitone and such that $\neg 1=0$\textnormal{)} on  $\lat(X,\comp)$.
\end{proposition}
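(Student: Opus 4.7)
The plan is to verify the two claims by unpacking the definition of $\neg_\comp$ together with the extremal elements of $\lat(X,\comp)$. By Proposition~\ref{NegProp}, $\neg_\comp$ already maps $\lat(X,\comp)$ into itself; the bottom is $0_\lat = c_\comp(\varnothing)$, and the top is $1_\lat = c_\comp(X) = X$ (for any $y$ with $y\comp x$, take $z = x$ to witness $y\comp z \in X$, so $X$ is already a $c_\comp$-fixpoint; this holds for any relational frame, reflexive or not). I would then observe that both $\neg_\comp X$ and $c_\comp(\varnothing)$ reduce to $\{x \in X \mid \text{no } y \text{ with } y\comp x\}$, because ``$y \notin X$'' and ``$z \in \varnothing$'' both make the inner matrix unsatisfiable. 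Hence $\neg_\comp 1_\lat = c_\comp(\varnothing) = 0_\lat$ with no use of reflexivity.

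Antitonicity is a one-liner: if $A \subseteq B$, then ``$y \notin B$ for every $y\comp x$'' is stronger than ``$y \notin A$ for every $y\comp x$'', so $\neg_\comp B \subseteq \neg_\comp A$. Together with the previous paragraph, this settles the relational frame case.

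For the compatibility frame case, reflexivity supplies the remaining two conditions. First, since $x\comp x$ forbids any $x$ from satisfying ``no $y\comp x$'', we get $0_\lat = c_\comp(\varnothing) = \varnothing$, and then $\neg_\comp 0_\lat = \neg_\comp \varnothing = X = 1_\lat$ by vacuity. Second, for the semicomplementation property, if $x \in A \cap \neg_\comp A$, then instantiating the universal in $\neg_\comp A$ at $y = x$ (allowed by $x\comp x$) forces $x \notin A$, contradicting $x \in A$; so $A \wedge \neg_\comp A = A \cap \neg_\comp A = \varnothing = 0_\lat$. Altogether, $\neg_\comp$ is then an antitone semicomplementation with $\neg 0 = 1$, i.e., a protocomplementation. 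The only step that deserves any attention is verifying $c_\comp(X) = X$ without reflexivity, so that $1_\lat$ is genuinely $X$; everything else is direct definition-chasing.
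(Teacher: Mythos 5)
Your proof is correct, and since the paper only remarks that this proposition "is easy to check" without supplying an argument, your direct verification is exactly the intended one: identifying $1_\lat = X$ and $0_\lat = c_\comp(\varnothing)$, noting that $\neg_\comp X = c_\comp(\varnothing)$ holds in any relational frame, and using reflexivity only for $c_\comp(\varnothing)=\varnothing$, for $\neg_\comp\varnothing = X$, and for $A\cap\neg_\comp A=\varnothing$. All steps check out, including the careful point that $c_\comp(X)=X$ and the identification of the bottom of the fixpoint lattice with $c_\comp(\varnothing)$.
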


\noindent In our diagrams of lattices arising from compatibility frames, the dashed arrows represent the operation $\neg_\comp$. We omit arrows representing  $\neg 0=1$ and $\neg 1=0$.

\subsection{Frames for ortholattices}

If we assume that $\comp$ is \textit{symmetric}, then we get a standard representation (as in \cite{Goldblatt1974} via ``proximity frames'') of ortholattices.  The proof is straightforward.

\begin{proposition}\label{IsOrtho} For any compatibility frame $(X,\comp)$, if $\comp$ is symmetric, then $\neg_\comp$ is an orthocomplementation on $\lat(X,\comp)$.
\end{proposition}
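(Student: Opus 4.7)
The plan is to leverage the preceding proposition, which already gives that $\neg_\comp$ is an antitone semicomplementation with $\neg_\comp 0 = 1$ on $\lat(X,\comp)$. So to upgrade to an orthocomplementation, the two things left to verify, given symmetry, are involutivity (i.e., $\neg_\comp\neg_\comp A = A$ for every $c_\comp$-fixpoint $A$) and the join condition $A \vee \neg_\comp A = X$ (since meet $A \cap \neg_\comp A = \varnothing$ is already part of being a semicomplementation).

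For involutivity, I would prove each inclusion separately. The inclusion $A \subseteq \neg_\comp\neg_\comp A$ holds by unwinding definitions: if $x \in A$ and $y \comp x$, then by symmetry $x \comp y$, so $x$ witnesses that $y \notin \neg_\comp A$; hence $x \in \neg_\comp\neg_\comp A$. This direction does not even need $A$ to be a fixpoint. For the reverse inclusion $\neg_\comp\neg_\comp A \subseteq A$, I would take the contrapositive: suppose $x \notin A$. Because $A$ is a $c_\comp$-fixpoint, there exists $x' \comp x$ such that for all $x'' \compflip x'$ we have $x'' \notin A$. By symmetry, $\compflip$ coincides with $\comp$, so $x' \in \neg_\comp A$. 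Since $x' \comp x$, this witnesses $x \notin \neg_\comp\neg_\comp A$. Thus the fixpoint property of $A$ combined with symmetry delivers the nontrivial inclusion.

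For the complementation property, I need to show $c_\comp(A \cup \neg_\comp A) = X$, so I take an arbitrary $x \in X$ and an arbitrary $y \comp x$ and must produce some $z \compflip y$ with $z \in A \cup \neg_\comp A$. Symmetry again lets me conflate $\comp$ and $\compflip$. If $y \in \neg_\comp A$, then by reflexivity $y \comp y$, so I take $z = y$. Otherwise, by definition of $\neg_\comp$, there is some $z \comp y$ with $z \in A$, and by symmetry this $z$ also satisfies $z \compflip y$, giving the required witness in $A$. Either way, the required $z$ exists, so $x \in c_\comp(A \cup \neg_\comp A)$.

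There is no real obstacle here; the argument is essentially a bookkeeping exercise once one notices the role of each hypothesis. If anything, the subtle point is recognising \emph{why} the fixpoint hypothesis is needed for involutivity and is not just a decoration: it is what turns the negative statement $x \notin A$ into a positive witness $x' \in \neg_\comp A$, which in turn witnesses $x \notin \neg_\comp\neg_\comp A$. Symmetry is used in both parts to identify $\comp$ and $\compflip$, and reflexivity is used only in the complementation step to use $y$ itself as a witness.
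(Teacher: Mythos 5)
Your proof is correct and is essentially the direct verification the paper has in mind when it calls the proof ``straightforward'' (the paper omits the details entirely). You correctly isolate the two remaining conditions beyond the protocomplementation facts already established, and you use symmetry, reflexivity, and the fixpoint hypothesis exactly where each is needed.
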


\begin{example} Fig.~\ref{OrthoFig} shows two symmetric compatibility frames  and their associated ortholattices, $\mathbf{MO}_2$ or $\mathbf{M}_4$  (left) and the Benzene ring $\mathbf{O}_6$ (right).
\end{example}

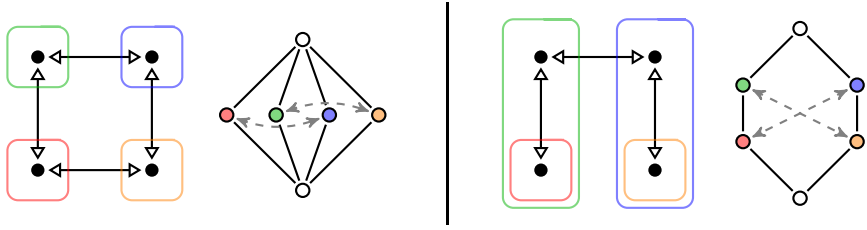
\begin{figure}[h]

\begin{center}
\begin{minipage}{1in}
\begin{center}
\begin{tikzpicture}[->,>=stealth',shorten >=1pt,shorten <=1pt, auto,node
distance=2cm,thick,every loop/.style={<-,shorten <=1pt}]
\tikzstyle{every state}=[fill=gray!20,draw=none,text=black]
\node[circle, fill=black!100,inner sep=0pt,minimum size=.175cm] at (0,0) (A) {}; 
\node[circle, fill=black!100,inner sep=0pt,minimum size=.175cm] at (0,1.5) (B) {}; 
\node[circle, fill=black!100,inner sep=0pt,minimum size=.175cm] at (1.5,1.5) (C) {}; 
\node[circle, fill=black!100,inner sep=0pt,minimum size=.175cm] at (1.5,0) (D) {}; 

\path[{Triangle[open]}-{Triangle[open]},draw,thick] (A) to node {{}}  (B);
\path[{Triangle[open]}-{Triangle[open]},draw,thick] (B) to node {{}}  (C);
\path[{Triangle[open]}-{Triangle[open]},draw,thick] (C) to node {{}}  (D);
\path[{Triangle[open]}-{Triangle[open]},draw,thick] (D) to node {{}}  (A);

\path[-, draw=red, opacity=0.5, thick, rounded corners]  (0, .4) -- (.4, .4) -- (.4, -.4) -- (-.4, -.4) -- (-.4, .4) -- (.4, .4) -- (0, .4); 

\path[-, draw=darkgreen, opacity=0.5, thick, rounded corners]  (0, 1.9) -- (.4, 1.9) -- (.4, 1.1) -- (-.4, 1.1) -- (-.4, 1.9) -- (.4, 1.9) -- (0, 1.9); 

\path[-, draw=blue, opacity=0.5, thick, rounded corners]  (1.5, 1.9) -- (1.9, 1.9) -- (1.9, 1.1) -- (1.1, 1.1) -- (1.1, 1.9) -- (1.9, 1.9) -- (1.5, 1.9); 

\path[-, draw=orange, opacity=0.5, thick, rounded corners]  (1.5, .4) -- (1.9, .4) -- (1.9, -.4) -- (1.1, -.4) -- (1.1, .4) -- (1.9, .4) -- (1.5, .4); 

\end{tikzpicture}
\end{center}
\end{minipage}
\begin{minipage}{1in}
\begin{center}
\begin{tikzpicture}[->,>=stealth',shorten >=1pt,shorten <=1pt, auto,node
distance=2cm,thick,every loop/.style={<-,shorten <=1pt}]
\tikzstyle{every state}=[fill=gray!20,draw=none,text=black]
\node[circle,draw=black!100, label=right:$$,inner sep=0pt,minimum size=.175cm] (1) at (0,0) {{}};
\node[circle,fill=red!50,draw=black!100, label=left:$$,inner sep=0pt,minimum size=.175cm] (x) at (-1,-1) {{}};
\node[circle,fill=darkgreen!50, draw=black!100, label=right:$$,inner sep=0pt,minimum size=.175cm] (y) at (-.35,-1) {{}};
\node[circle,fill=blue!50, draw=black!100, label=right:$$,inner sep=0pt,minimum size=.175cm] (w) at (.35,-1) {{}};
\node[circle,fill=orange!50,draw=black!100, label=right:$$,inner sep=0pt,minimum size=.175cm] (z) at (1,-1) {{}};
\node[circle,draw=black!100, label=right:$$,inner sep=0pt,minimum size=.175cm] (0) at (0,-2) {{}};

\path (1) edge[-] node {{}} (y);
\path (1) edge[-] node {{}} (x);
\path (1) edge[-] node {{}} (z);
\path (1) edge[-] node {{}} (w);
\path (x) edge[-] node {{}} (0);
\path (y) edge[-] node {{}} (0);
\path (z) edge[-] node {{}} (0);
\path (w) edge[-] node {{}} (0);

\path (x) edge[<->,dashed,bend right=20,gray] node {{}} (w);
\path (y) edge[<->,dashed,bend left=20,gray] node {{}} (z);

\end{tikzpicture}
\end{center}
\end{minipage}\qquad\vrule\qquad\begin{minipage}{1in}
\begin{center}
\begin{tikzpicture}[->,>=stealth',shorten >=1pt,shorten <=1pt, auto,node
distance=2cm,thick,every loop/.style={<-,shorten <=1pt}]
\tikzstyle{every state}=[fill=gray!20,draw=none,text=black]
\node[circle, fill=black!100,inner sep=0pt,minimum size=.175cm] at (0,0) (A) {}; 
\node[circle, fill=black!100,inner sep=0pt,minimum size=.175cm] at (0,1.5) (B) {}; 
\node[circle, fill=black!100,inner sep=0pt,minimum size=.175cm] at (1.5,1.5) (C) {}; 
\node[circle, fill=black!100,inner sep=0pt,minimum size=.175cm] at (1.5,0) (D) {}; 

\path[{Triangle[open]}-{Triangle[open]},draw,thick] (A) to node {{}}  (B);
\path[{Triangle[open]}-{Triangle[open]},draw,thick] (B) to node {{}}  (C);
\path[{Triangle[open]}-{Triangle[open]},draw,thick] (C) to node {{}}  (D);

\path[-, draw=red, opacity=0.5, thick, rounded corners]  (0, .4) -- (.4, .4) -- (.4, -.4) -- (-.4, -.4) -- (-.4, .4) -- (.4, .4) -- (0, .4); 

\path[-, draw=darkgreen, opacity=0.5, thick, rounded corners]  (0, 2) -- (.5, 2) -- (.5, -.5) -- (-.5, -.5) -- (-.5, 2) -- (.5, 2) -- (0, 2); 

\path[-, draw=blue, opacity=0.5, thick, rounded corners]  (1.5, 2) -- (2, 2) -- (2, -.5) -- (1, -.5) -- (1, 2) -- (2, 2) -- (1.5, 2); 

\path[-, draw=orange, opacity=0.5, thick, rounded corners]  (1.5, .4) -- (1.9, .4) -- (1.9, -.4) -- (1.1, -.4) -- (1.1, .4) -- (1.9, .4) -- (1.5, .4); 

\end{tikzpicture}
\end{center}
\end{minipage}
\begin{minipage}{1in}
\begin{center}
\begin{tikzpicture}[->,>=stealth',shorten >=1pt,shorten <=1pt, auto,node
distance=2cm,thick,every loop/.style={<-,shorten <=1pt}]
\tikzstyle{every state}=[fill=gray!20,draw=none,text=black]
\node[circle,draw=black!100, label=below:,inner sep=0pt,minimum size=.175cm] (0) at (0,0) {{}};
\node[circle,draw=black!100,fill=red!50, label=left:,inner sep=0pt,minimum size=.175cm] (a) at (-.75,.75) {{}};
\node[circle,draw=black!100,fill=orange!50, label=right:,inner sep=0pt,minimum size=.175cm] (b) at (.75,.75) {{}};
\node[circle,draw=black!100,fill=darkgreen!50, label=left:,inner sep=0pt,minimum size=.175cm] (1l) at (-.75,1.5) {{}};
\node[circle,draw=black!100,fill=blue!50, label=right:,inner sep=0pt,minimum size=.175cm] (1r) at (.75,1.5) {{}};
\node[circle,draw=black!100, label=above:,inner sep=0pt,minimum size=.175cm] (new1) at (0,2.25) {{}};

\path (new1) edge[-] node {{}} (1l);
\path (new1) edge[-] node {{}} (1r);
\path (1l) edge[-] node {{}} (a);
\path (1r) edge[-] node {{}} (b);
\path (a) edge[-] node {{}} (0);
\path (b) edge[-] node {{}} (0);

\path (1l) edge[<->,dashed,gray] node {{}} (b);
\path (1r) edge[<->,dashed,gray] node {{}} (a);

\end{tikzpicture}
\end{center}
\end{minipage}

\end{center}

\caption{Compatibility frames realizing ortholattices.}\label{OrthoFig}
\end{figure}

\subsection{Frames for Heyting and Boolean algebras}\label{BooleHeyt}

We conclude this section with sufficient conditions on $\comp$ for realizing Heyting and Boolean algebras, weaker than those in Proposition \ref{PWS} (where $\comp$ was a preorder or the identity relation). First, we define some auxiliary notions.

\begin{definition} Given a compatibility frame $(X,\comp)$ and $x,y\in X$:
\begin{enumerate}
\item $x$ \textit{pre-refines $y$}, written $x\sqsubseteq_{pr} y$, if for all $z\in X$, $z\comp x$ implies $z\comp y$;
\item $x$ \textit{post-refines $y$}, written $x\sqsubseteq_{po} y$, if for all $z\in X$, $x\comp z$ implies $y\comp z$;
\item $x$ \textit{refines} $y$, written $x\sqsubseteq y$, if $x$ pre-refines and post-refines $y$;
\item $x$ is \textit{compossible with $y$} if there is a $w\in X$ that refines $x$ and pre-refines $y$.
\end{enumerate} 
\end{definition}

\noindent Note that if $\comp$ is symmetric, then pre-refinement and post-refinement are equivalent, and $x$ is compossible with $y$ just in case they have a common refinement.

\begin{lemma}\label{DownsetLem} For any compatibility frame $(X,\comp)$, $\sqsubseteq_{pr}$ and $\sqsubseteq_{po}$ are preorders on $X$. Moreover, each $c_\comp$-fixpoint is a $\sqsubseteq_{pr}$-downset.
\end{lemma}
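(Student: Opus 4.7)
The plan is to handle the two assertions separately, noting that the first is essentially definitional and the second unwinds the definition of $c_\comp$-fixpoint using pre-refinement in a one-step way.

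For the preorder claim, I would observe that both $\sqsubseteq_{pr}$ and $\sqsubseteq_{po}$ are defined as universally quantified implications involving $\comp$, so reflexivity (take the trivial implication $z\comp x \Rightarrow z\comp x$) and transitivity (chain the two implications) are immediate and require no properties of $\comp$ at all. I would dispatch these in a single sentence.

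For the downset claim, the content is the implication: if $A=c_\comp(A)$, $y\in A$, and $x\sqsubseteq_{pr} y$, then $x\in A$. Since $A=c_\comp(A)$, it suffices to show $x\in c_\comp(A)$, i.e., that for every $x'\comp x$ there is some $x''\compflip x'$ with $x''\in A$. The trick is that $y$ itself will serve as $x''$: given $x'\comp x$, pre-refinement yields $x'\comp y$, equivalently $y\compflip x'$, and $y\in A$ by hypothesis. So the witness is immediate.

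There is no real obstacle here; the lemma is a direct unwinding of the definitions, with the pre-refinement condition tailored precisely so that compatibility with $x$ transfers to compatibility with $y$ in the direction needed to verify $x\in c_\comp(A)$. The only thing to be careful about is not to confuse $\comp$ with $\compflip$ when rewriting $x'\comp y$ as $y\compflip x'$ to match the quantifier pattern in the definition of $c_\comp$.
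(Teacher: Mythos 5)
Your proof is correct and matches the paper's argument exactly: the preorder claim is dismissed as definitional, and the downset claim is verified by using the element already known to lie in the fixpoint as the witness for the $\exists x''\compflip x'$ clause, with pre-refinement transferring $x'\comp x$ to $x'\comp y$. No issues.
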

\begin{proof} The preorder part is obvious. Next suppose $A$ is a $c_\comp$-fixpoint, $x\in A$, and $y\sqsubseteq_{pr} x$. Toward showing that $y\in A$, consider any $y'\comp y$. Then since $y\sqsubseteq_{pr} x$, we have $y'\comp x$, so taking $y''=x$, we have shown that for every $y'\comp y$ there is a $y''\compflip y'$ with $y''\in A$. Since $A$ is a $c_\comp$-fixpoint, it follows that $y\in A$.\end{proof}

\begin{definition} A \textit{compossible compatibility frame} is a compatibility frame $(X,\comp)$ in which for any $x,y\in X$, if $x\comp y$, then $x$ is compossible with $y$.
\end{definition}

\begin{theorem}\label{HeytBool}$\,$ 
\begin{enumerate}
\item\label{HeytBool2} If $(X,\comp)$ is a compossible compatibility frame, then $\lat(X,\comp)$ is a Heyting algebra with $\to$ defined by $A\to B=\{x\in X\mid \forall y\sqsubseteq_{pr} x \, (y\in A\Rightarrow y\in B)\}$, and $\neg_\comp$ is the Heyting pseudocomplementation.
\item\label{HeytBool3} If $(X,\comp)$ is a compossible symmetric compatibility frame, then $\lat(X,\comp)$ is the Boolean algebra $\mathcal{RO}(X,\sqsubseteq)$, and $\neg_\comp$ is the Boolean complementation.
\end{enumerate}
\end{theorem}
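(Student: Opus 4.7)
The plan is to treat (ii) as a specialization of (i) together with Proposition \ref{RegOpen}, so the real work is part (i). For (i), I will verify that the candidate $C := A \to B$ satisfies the universal property of Heyting implication in $\lat(X,\comp)$: (a) $C$ is a $c_\comp$-fixpoint, (b) $A \cap C \subseteq B$, and (c) every $c_\comp$-fixpoint $D$ with $A \cap D \subseteq B$ satisfies $D \subseteq C$. Since $\wedge = \cap$ on $\lat(X,\comp)$ by Proposition \ref{ClosureLattice}, (a)--(c) identify $C$ as the Heyting implication of $A$ and $B$.

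Items (b) and (c) are immediate from Lemma \ref{DownsetLem}. For (b), if $x \in A \cap C$, reflexivity of $\sqsubseteq_{pr}$ lets me take $y := x$ in the definition of $C$, yielding $x \in B$. For (c), given $x \in D$ and $y \sqsubseteq_{pr} x$ with $y \in A$, the lemma puts $y \in D$, so $y \in A \cap D \subseteq B$ and thus $x \in C$.

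The main obstacle is (a), where compossibility becomes indispensable. Arguing contrapositively, I extract from $x \notin C$ some $y \sqsubseteq_{pr} x$ with $y \in A \setminus B$; fixpointhood of $B$ yields $y' \comp y$ such that every $y'' \compflip y'$ lies outside $B$. Applying compossibility to $y' \comp y$ produces $u$ with $u \sqsubseteq y'$ and $u \sqsubseteq_{pr} y$, so $u \in A$ (Lemma \ref{DownsetLem}) and $u \notin B$ (since $u \sqsubseteq_{po} y'$ combined with reflexivity gives $y' \comp u$). Transitivity of $\sqsubseteq_{pr}$ puts $u \sqsubseteq_{pr} x$, and reflexivity of $\comp$ then produces $u \comp x$, so I set $x' := u$. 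For any $x'' \compflip u$, a second application of compossibility to $u \comp x''$ yields $w$ with $w \sqsubseteq u$ and $w \sqsubseteq_{pr} x''$, and the chain $w \sqsubseteq_{po} u$, $u \sqsubseteq_{po} y'$ with reflexivity of $\comp$ propagates to $y' \comp w$, so $w \in A \setminus B$. Hence $w$ witnesses $x'' \notin C$, completing the fixpoint argument.

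The identity $\neg_\comp A = A \to \varnothing$, which makes $\neg_\comp$ the Heyting pseudocomplement, follows similarly: one inclusion uses reflexivity of $\sqsubseteq_{pr}$ to turn $y \sqsubseteq_{pr} x$ into $y \comp x$, and the converse shrinks any $y \comp x$ via compossibility to $w \sqsubseteq y$ with $w \sqsubseteq_{pr} x$, forcing a contradiction if $y \in A$. For (ii), symmetry collapses $\sqsubseteq_{pr}$, $\sqsubseteq_{po}$, and $\sqsubseteq$ into a single preorder, and compossibility combined with reflexivity gives $x \comp y$ iff $x$ and $y$ share a common $\sqsubseteq$-refinement. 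Thus $\comp$ coincides with the relation built from $(X,\sqsubseteq)$ in Proposition \ref{RegOpen}, so $\lat(X,\comp) = \mathcal{RO}(X,\sqsubseteq)$ is Boolean; and since pseudocomplements are complements in any Boolean algebra, $\neg_\comp$ from (i) is the Boolean complementation.
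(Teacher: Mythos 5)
Your argument is correct, but it reaches the Heyting structure by a genuinely different route than the paper. For part (i) the paper does not verify the residuation property directly: it observes that the $\sqsubseteq_{pr}$-downsets form a Heyting algebra $H$, proves (using compossibility) that $A\cap c_\comp(B)\subseteq c_\comp(A\cap B)$ for $A,B\in H$, concludes that $c_\comp$ restricted to $H$ is a nucleus, and then invokes the general fact that the fixpoints of a nucleus on a complete Heyting algebra form a complete Heyting algebra whose implication is the restriction of $\to_H$ --- which is exactly the displayed formula. You instead check the adjunction $D\subseteq A\to B \Leftrightarrow A\cap D\subseteq B$ by hand, with the only substantive work being that $A\to B$ is a $c_\comp$-fixpoint; your double application of compossibility (first to $y'\comp y$ to get $u$, then to $u\comp x''$ to get $w$, chasing $\sqsubseteq_{pr}$ and $\sqsubseteq_{po}$ through $A$ and the witness for $y\notin B$) is sound and plays the same role that the single compossibility chase plays inside the paper's nucleus inequality. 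Your version is more self-contained, avoiding the nucleus machinery and the quoted fact about $\to_{H_j}$; the paper's version gets the formula for $\to$ for free once the nucleus inequality is established. For part (ii) the two proofs agree on reducing to Proposition \ref{RegOpen} via the equivalence of $x\comp y$ with the existence of a common $\sqsubseteq$-refinement, though your derivation of the complementation claim (pseudocomplements coincide with complements in a Boolean algebra, so it follows from part (i)) is slightly slicker than the paper's direct recomputation of $\neg$ in $\mathcal{RO}(X,\sqsubseteq)$. One trivial slip: in the $\neg_\comp A = A\to\varnothing$ step, the passage from $y\sqsubseteq_{pr}x$ to $y\comp x$ uses reflexivity of $\comp$ (instantiate $z:=y$ in the definition of pre-refinement), not reflexivity of $\sqsubseteq_{pr}$.
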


\begin{proof} For part (\ref{HeytBool2}), recall that a \textit{nucleus} on a Heyting algebra $H$ is a closure operator that is also multiplicative, i.e., for all $x,y\in H$, we have $c(x)\wedge c(y)\leq c(x\wedge y)$. This follows from $c$ being a closure operator such that for all $a,b\in H$, $a\wedge c(b) \leq c(a\wedge b)$. For then setting $a=c(x)$ and $b=y$, $c(x)\wedge c(y)\leq c(c(x)\wedge y)=c(y\wedge c(x))$. Then setting $a=y$ and $b=x$, we have $y\wedge c(x)\leq c(y\wedge x)$, in which case monotonicity and idempotence yield $c(y\wedge c(x))\leq c(c(y\wedge x))= c(y\wedge x)=c(x\wedge y)$.
Thus, combining the two long strings of equations, $c(x)\wedge c(y)\leq c(x\wedge y)$.

Now it is well known that the fixpoints of a nucleus $j$ on a (complete) Heyting algebra $H$ form a (complete) Heyting algebra $H_j$ under the restricted lattice order (see, e.g., \cite[p.~71]{Dragalin1988}), where for $a,b\in H_j$, $a\to_{H_j} b = a\to_H b$.

Let $H$ be the Heyting algebra of all $\sqsubseteq_{pr}$-downsets. By the results above, to prove the first part of (\ref{HeytBool2}), it suffices to show that $c_\comp$ restricted to $H$ is a nucleus, for which it suffices to show that for all  $A,B\in H$, $A\cap c_\comp (B)\subseteq c_\comp(A\cap B)$. Suppose $x\in A\cap c_\comp (B)$ but $x\not\in c_\comp(A\cap B)$. Since $x\not\in c_\comp(A\cap B)$, there is a $y\comp x$ such that $(\star)$ for all $y'\compflip y$, we have $y'\not\in A\cap B$. Since $y\comp x$, by compossibility there is a $z$ that refines $y$ and pre-refines $x$. Since $z$ pre-refines $x$,  $z\comp x$. Then since $x\in c_\comp (B)$, there is a $z'\compflip  z$ with $z'\in B$.  Since $z\comp z'$, there is a $w$ that refines $z$ and pre-refines $z'$. Since $w$ pre-refines $z'$, from $z'\in B$ we have $w\in B$ by Lemma \ref{DownsetLem}. Since $w$ pre-refines $z$ and $z$ pre-refines $x$,  $w$ pre-refines $x$, so $x\in A$ implies $w\in A$. Thus, $w\in A\cap B$. Moreover, since $w$ post-refines $z$ and $z$ post-refines $y$, $w$ post-refines $y$. Hence $y\comp w$. But then by $(\star)$, $w\not\in A\cap B$. This is a contradiction. Finally, for the claim about $\neg_\comp$, observe that for any $\sqsubseteq_{pr}$-downset $A$, we have $A\to \varnothing=\{x\in X\mid \forall y \sqsubseteq_{pr}x\;\, y\not\in A\}=\neg_\comp A$; for the second equality, the right-to-left inclusion uses that $y \sqsubseteq_{pr}x$ implies $y\comp x$, while the left-to-right inclusion uses the assumption of compossibility.
 
 For part (\ref{HeytBool3}), to see that $\lat(X,\comp)$ is isomorphic to $\mathcal{RO}(X,\sqsubseteq)$, by Proposition \ref{RegOpen} it suffices to show that for all $x,y\in X$, we have $x\comp y$ iff there is a $z\in X$ such that $z\sqsubseteq x$ and $z\sqsubseteq y$. From left to right, if $x\comp y$, then since $(X,\comp)$ is a compossible compatibility frame, there is a $z$ that refines $x$ and pre-refines $y$, which implies that $z$ refines $y$ by the symmetry of $\comp$. From right to left, $z\sqsubseteq x$ implies $x\comp z$, which with $z\sqsubseteq y$ implies $x\comp y$. Finally, for the claim about $\neg_\comp$, recall that $\neg A$ in $\mathcal{RO}(X,\sqsubseteq)$  is $\{x\in X\mid \forall y\sqsubseteq x\;\, y\not\in A\}$, which is equal to $\neg_\comp A$ by reasoning analogous to that at the end of the previous paragraph.\end{proof}

\section{From lattices to compatibility frames}\label{LatticesToFrames}

\subsection{Representation of special complete lattices via join-dense sets}\label{SpecialRepSec}

In this section, we give an economical representation of certain complete lattices $L$ using compatibility frames  based on a join-dense set $\mathrm{V}$ of nonzero elements of $L$, so the frame representing $L$ is smaller than $L$. In all our figures, the frame can be seen as obtained from the lattice via this representation. 

Let $CJI(L)$ be the set of completely join-irreducible elements of $L$. Recall that a complete Heyting algebra is \textit{perfect} if $CJI(L)$ is join-dense in $L$. The standard representations of complete perfect Heyting algebras and  complete ortholattices drop out of the representation in this section as special cases. 

\subsubsection{Suitable compatibility relations}

Before giving the crucial definition of the compatibility relation used in our representation (Definition \ref{KeyDef}(\ref{KeyDef1}) below), it is instructive to distill conditions on a compatibility relation sufficient for a successful representation.

\begin{definition} Let $L$ be a lattice and $\mathrm{V}$ a set of elements of $L$. A binary relation $\comp$ on $\mathrm{V}$ is \textit{suitable for $L$} if $\comp$ is reflexive and:
\begin{enumerate}
\item for $a\in \mathrm{V}$ and $b\in L$, if $a\not\leq b$, then $\exists a'\comp a$ $\forall a''\compflip a'$  $a''\not\leq b$; 
\item if $a\in \mathrm{V}$, $B$ is a $c_\comp$-fixpoint of $(\mathrm{V},\comp)$ with a join $b$ in $L$, and $a\leq b$, then~$a\in B$.
\end{enumerate}
\end{definition}
\noindent We call (i) and (ii) the first and second \textit{suitability conditions}, respectively.

\begin{proposition}\label{SpecialRep} Let $L$ be a lattice, $\mathrm{V}$ a join-dense set of elements of $L$, and $\comp$ a reflexive relation on $\mathrm{V}$. For $b\in L$, define $\varphi(b)=\{x\in \mathrm{V}\mid x\leq b\}$.
\begin{enumerate}
\item\label{SpecialRep1} If $\comp$ satisfies the first suitability condition, then $\varphi$ embeds $L$ into $\lat(\mathrm{V},\comp)$.
\item\label{SpecialRep2} If $L$ is complete and $\comp$ suitable, $\varphi$ is an isomorphism from $L$ to $\lat(\mathrm{V},\comp)$.
\end{enumerate}
\end{proposition}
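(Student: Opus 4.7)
The plan is to handle (i) and (ii) in sequence, since (ii) really just adds a surjectivity step on top of (i). For (i), the key observation is that the first suitability condition is precisely the contrapositive of ``$x \in c_\comp(\varphi(b)) \Rightarrow x \leq b$.'' So I would start by showing $\varphi(b) \in \lat(\mathrm{V},\comp)$ for every $b \in L$: the inclusion $\varphi(b) \subseteq c_\comp(\varphi(b))$ is the inflationary property, and for the converse, suppose $x \in c_\comp(\varphi(b))$, meaning for every $x' \comp x$ there is $x'' \compflip x'$ with $x'' \leq b$; the contrapositive of first suitability, applied with $a = x$, yields $x \leq b$, i.e.\ $x \in \varphi(b)$.

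Next I would verify that $\varphi$ is an order-embedding preserving binary meets. Order-preservation is immediate from the definition. For order-reflection, if $\varphi(a) \subseteq \varphi(b)$, then join-density of $\mathrm{V}$ gives $a = \bigvee \varphi(a) \leq \bigvee \varphi(b) = b$ (so in particular $\varphi$ is injective). Preservation of binary meets is a one-line check: $x \leq a \wedge b$ iff $x \leq a$ and $x \leq b$, so $\varphi(a \wedge b) = \varphi(a) \cap \varphi(b)$, and $\cap$ is the meet in $\lat(\mathrm{V},\comp)$ by Proposition \ref{ClosureLattice}. This gives (i).

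For (ii), with $L$ complete and $\comp$ fully suitable, the extra work is to show surjectivity. Given any fixpoint $B \in \lat(\mathrm{V},\comp)$, let $b = \bigvee B$ in $L$, which exists by completeness. The inclusion $B \subseteq \varphi(b)$ is automatic since every element of $B$ is below $b$; the reverse $\varphi(b) \subseteq B$ is \emph{exactly} the second suitability condition applied to $B$. Hence $\varphi$ is bijective, and since an order-isomorphism between complete lattices automatically preserves arbitrary joins and meets, $\varphi$ is a complete lattice isomorphism.

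The main obstacle is conceptual rather than technical: recognizing how the two suitability conditions play complementary roles. The first condition is exactly what makes every $\varphi(b)$ closed (so $\varphi$ is well-defined and embeds nicely); the second condition is exactly what prevents closures from introducing ``extra'' elements not captured by any $\varphi(b)$ (so $\varphi$ is surjective on fixpoints). Once this dual role is isolated, the verification is routine; the only subtlety to watch is that in part (i) preservation of arbitrary joins is \emph{not} claimed (and need not hold), so one should be careful not to overstate what first suitability alone delivers before invoking completeness and the second condition in (ii).
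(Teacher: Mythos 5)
Your proposal is correct and follows essentially the same route as the paper: first suitability makes each $\varphi(b)$ a $c_\comp$-fixpoint (you phrase this as the contrapositive, the paper argues it directly), join-density gives order-reflection, and the second suitability condition gives exactly $\varphi(\bigvee B)\subseteq B$ for surjectivity in part (ii). Your explicit check of meet preservation and your remark on the dual roles of the two conditions are fine additions but do not change the argument.
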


\begin{proof} For part (\ref{SpecialRep1}), clearly $\varphi$ is order-preserving: if $a\leq b$, then $\varphi(a)\subseteq \varphi(b)$. Moreover, $\varphi$ is order-reflecting:  since $\mathrm{V}$ is join-dense in $L$, we have $a=\bigvee A$ for some $A\subseteq \mathrm{V}$, so $a\not\leq b$ implies that for some $a_0\in A$, we have $a_0\not\leq b$, so $a_0\in \varphi(a)$ but $a_0\not\in\varphi(b)$, and hence $\varphi(a)\not\subseteq\varphi(b)$.

Next we claim $\varphi(b)$ is a $c_\comp$-fixpoint. Suppose for $a\in \mathrm{V}$ that $a\not\in \varphi(b)$, so $a\not\leq b$. Then by the first suitability condition, there is an $a'\comp a$ such that for all $a''\compflip a'$, $a''\not\leq b$ and hence $a''\not\in\varphi(b)$. Thus, $\varphi(b)$ is a $c_\comp$-fixpoint.

For (\ref{SpecialRep2}),  $\varphi$ is surjective. Let $B$ be a $c_\comp$-fixpoint and $b=\bigvee B$. We claim  $B=\varphi(b)$. For $B\subseteq\varphi(b)$, if $b_0\in B$, then $b_0\leq b$, so $b_0\in \varphi(b)$. For $B\supseteq\varphi(b)$, suppose $a\in \varphi(B)$, so $a\leq b$. Then by the second suitability condition, $a\in B$.\end{proof}

Our strategy for defining suitable compatibility relations on a join-dense set $\mathrm{V}$ of elements of $L$ will be to assume that $L$ comes equipped with an anti-inflationary operation~$\neg$, e.g., as in the case of an ortholattice with orthcomplementation $\neg$ or a p-algebra or Heyting algebra with pseudocomplementation $\neg$ or a lattice to which we have added an anti-inflationary operation $\neg$ as in Fig.~\ref{FirstFig}. We will then use $\neg$ to define a compatibility relation $\comp^\neg_\mathrm{V}$ on $\mathrm{V}$. 

\subsubsection{The first suitability condition}

In the case of an ortholattice, our defined compatibility relation will be equivalent to $x\comp y$ if $y\not\leq\neg x$ (Lemma \ref{KeyDefApplies}(\ref{KeyDefApplies1})). To see why this compatibility relation satisfies the first suitability condition, the following concept is useful.

\begin{definition}\label{EscapeDef}  Let $L$ be a lattice equipped with a unary operation $\neg$ and $\mathrm{V}$ a set of elements of $L$. Given $a,b\in L$, we say that \textit{$a$ escapes $b$ in $\mathrm{V}$ with $\neg$} if there is some $c\in \mathrm{V}$ such that $a\not\leq \neg c$ but $b\leq\neg c$. \end{definition}

\begin{lemma}\label{OrthoEscape} Let $L$ be an ortholattice, $\mathrm{V}$ a join-dense set of elements of $L$, and $a,b\in L$. If $a\not\leq b$, then $a$ escapes $b$ in $\mathrm{V}$ with the orthocomplementation $\neg$.
\end{lemma}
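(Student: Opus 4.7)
The plan is to argue by contradiction, exploiting the fact that in an ortholattice the orthocomplementation $\neg$ is antitone and involutive, so $b\leq \neg c$ is equivalent to $c\leq \neg b$. This lets me translate the escape condition into a more tractable statement about which elements of $\mathrm{V}$ lie below $\neg b$ versus $\neg a$.

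Assume toward contradiction that $a\not\leq b$ but $a$ does not escape $b$ in $\mathrm{V}$ with $\neg$. Unpacking, this means that for every $c\in\mathrm{V}$, whenever $b\leq \neg c$ we also have $a\leq \neg c$. Using the ortholattice identities $b\leq \neg c \iff c\leq \neg b$ and $a\leq \neg c \iff c\leq \neg a$, this says precisely that for every $c\in\mathrm{V}$, $c\leq \neg b$ implies $c\leq \neg a$.

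Next I would bring in the join-density hypothesis. Since $\mathrm{V}$ is join-dense in $L$, the element $\neg b$ is the join of the set $S=\{c\in\mathrm{V}\mid c\leq \neg b\}$. By the previous step, every $c\in S$ satisfies $c\leq \neg a$, so $\neg a$ is an upper bound of $S$, whence $\neg b=\bigvee S\leq \neg a$. Applying $\neg$ once more and using antitonicity together with involutivity then gives $a=\neg\neg a\leq \neg\neg b=b$, contradicting $a\not\leq b$.

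There is no real obstacle here; the only thing to watch is the direction of the inequalities when passing through $\neg$, and to be explicit that the formulation of join-density being used is the one guaranteeing $\neg b=\bigvee\{c\in\mathrm{V}\mid c\leq \neg b\}$ (equivalently, every element is a join of some subset of $\mathrm{V}$, specialized to its lower set in $\mathrm{V}$). Everything else is a one-line manipulation inside the ortholattice.
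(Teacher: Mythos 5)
Your proof is correct and is essentially the paper's argument read contrapositively: both hinge on expressing $\neg b$ as a join of elements of $\mathrm{V}$ and on the equivalence $b\leq\neg c\iff c\leq\neg b$ furnished by the antitone involution. The paper extracts the witness $c$ directly from $\neg b\not\leq\neg a$, whereas you assume all candidates fail and conclude $\neg b\leq\neg a$, hence $a\leq b$; the content is the same.
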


\begin{proof} Suppose $a\not\leq b$, so $\neg b\not\leq\neg a$. Since $\mathrm{V}$ is join-dense, we have $\neg b=\bigvee C$ for some $C\subseteq \mathrm{V}$. Then $\neg b\not\leq \neg a$ implies $c\not\leq\neg a$ for some $c\in C$. From $c\not\leq\neg a$ we have $a\not\leq\neg c$, and from $c\in C$ we have $c\leq\neg b$ and hence $b\leq\neg c$.
\end{proof}
\noindent Using Lemma \ref{OrthoEscape}, it is easy to see that the relation $\comp$ on ortholattices defined by $x\comp y$ if $y\not\leq \neg x$ satisfies the first suitability condition.

\begin{figure}[h]
\begin{center}
\begin{minipage}{1.5in}
\begin{center}
\begin{tikzpicture}[->,>=stealth',shorten >=1pt,shorten <=1pt, auto,node
distance=2cm,thick,every loop/.style={<-,shorten <=1pt}]
\tikzstyle{every state}=[fill=gray!20,draw=none,text=black]

\node[circle,draw=black!100, label=right:$$,inner sep=0pt,minimum size=.175cm] (n1) at (0,0) {{}};
\node[circle,fill=darkgreen!50,draw=black!100, label=right:$a$,inner sep=0pt,minimum size=.175cm] (nx) at (1,-.5) {{}};
\node[circle,fill=blue!50,draw=black!100, label=right:$b$,inner sep=0pt,minimum size=.175cm] (ny) at (1,-1.5) {{}};
\node[circle,fill=red!50,draw=black!100, label=right:$$,inner sep=0pt,minimum size=.175cm] (nz) at (-1,-1) {{}};
\node[circle,draw=black!100, label=right:$$,inner sep=0pt,minimum size=.175cm] (n0) at (0,-2) {{}};
\path (nx) edge[-] node {{}} (n1);
\path (nx) edge[-] node {{}} (ny);
\path (n1) edge[-] node {{}} (nz);
\path (ny) edge[-] node {{}} (n0);
\path (nz) edge[-] node {{}} (n0);

\path (ny) edge[->,dashed,gray] node {{}} (nz);

\path (nx) edge[->,dashed,gray] node {{}} (n0);

\path (nz) edge[->,dashed,gray] node {{}} (nx);

\end{tikzpicture}
\end{center}
\end{minipage}\begin{minipage}{1.5in}
\begin{center}
\begin{tikzpicture}[->,>=stealth',shorten >=1pt,shorten <=1pt, auto,node
distance=2cm,thick,every loop/.style={<-,shorten <=1pt}]
\tikzstyle{every state}=[fill=gray!20,draw=none,text=black]

\node[circle,draw=black!100, label=right:$$,inner sep=0pt,minimum size=.175cm] (n1) at (0,0) {{}};
\node[circle,fill=darkgreen!50,draw=black!100, label=left:$a$,inner sep=0pt,minimum size=.175cm] (nx) at (0,-.75) {{}};
\node[circle,fill=blue!50,draw=black!100, label=left:$b$,inner sep=0pt,minimum size=.175cm] (ny) at (0,-1.5) {{}};
\node[circle,draw=black!100, label=right:$$,inner sep=0pt,minimum size=.175cm] (n0) at (0,-2.25) {{}};
\path (nx) edge[-] node {{}} (n1);
\path (nx) edge[-] node {{}} (ny);
\path (ny) edge[-] node {{}} (n0);

\path (nx) edge[->,bend left=25,dashed,gray] node {{}} (n0);
\path (ny) edge[->,bend right=25,dashed,gray] node {{}} (n0);

\end{tikzpicture}
\end{center}
\end{minipage}
\end{center}
\caption{Lattices with $\neg$ (dashed arrows) in which $a\not\leq b$ but $a$ cannot escape $b$.}\label{NoEscape}
\end{figure}
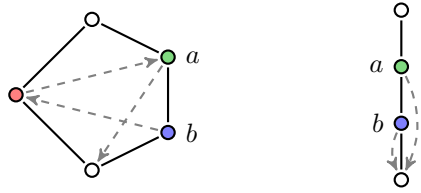

However, in lattices with $\neg$ that are not ortholattices, we can have $a\not\leq b$ while $a$ cannot escape $b$, as shown with two examples in Fig.~\ref{NoEscape}.

In order to deal with lattices in which $a\not\leq b$ does not imply that $a$ can escape $b$, we introduce the key definition of this section.

\begin{definition}\label{KeyDef} Let $L$ be a lattice, $\neg$ an anti-inflationary operation on $L$, and $\mathrm{V}$ a set of nonzero elements of $L$. 
\begin{enumerate}
\item\label{KeyDef1} Define $\comp^\neg_\mathrm{V}$ on $\mathrm{V}$ by: $x\comp^\neg_\mathrm{V} y$ if both $y\not\leq\neg x$ and for all $z\in L$, if $y\leq z$ but $x\not\leq z$, then $x$ escapes $z$ in $\mathrm{V}$ with $\neg$.
\item Given $a\in\mathrm{V}$ and $b\in L$, we say that \textit{$a$ compatibly escapes $b$ in $\mathrm{V}$ with $\neg$} if there is some $c\in \mathrm{V}$ such that $c\comp^\neg_\mathrm{V} a$ and $b\leq\neg c$.
\item $L$ \textit{has compatible escape with $\neg$ in $\mathrm{V}$} if  for all $a\in\mathrm{V}$ and $b\in L$, if $a$ escapes $b$ in $\mathrm{V}$ with $\neg$, then $a$ compatibly escapes $b$ in $\mathrm{V}$ with $\neg$.
\end{enumerate}
\end{definition}
The following is easy to check. 
\begin{lemma}\label{3CompLem} Under the assumptions of Definition \ref{KeyDef}, \textnormal{(i)} $\comp^\neg_\mathrm{V}$ is reflexive, \textnormal{(ii)}~$x\leq y$ implies $x\comp^\neg_\mathrm{V}y$, and \textnormal{(iii)} $x\comp^\neg_\mathrm{V}y\leq y'$ implies $x\comp^\neg_\mathrm{V}y'$.
\end{lemma}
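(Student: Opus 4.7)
The plan is to verify the three parts of the lemma by directly unpacking Definition \ref{KeyDef}(\ref{KeyDef1}), which requires two things for $x \comp^\neg_\mathrm{V} y$: (a) $y \not\leq \neg x$, and (b) for every $z \in L$ with $y \leq z$ and $x \not\leq z$, $x$ escapes $z$ in $\mathrm{V}$ with $\neg$. The only nontrivial ingredient beyond bare lattice order is that every element of $\mathrm{V}$ is nonzero and $\neg$ is anti-inflationary, i.e., $a \not\leq \neg a$ for every nonzero $a$, which is what rules out the bad case in clause (a).

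For (i), reflexivity of $\comp^\neg_\mathrm{V}$: clause (a) for $x \comp^\neg_\mathrm{V} x$ reads $x \not\leq \neg x$, which holds because $x \in \mathrm{V}$ is nonzero and $\neg$ is anti-inflationary; clause (b) is vacuous since its antecedent demands $x \leq z$ and $x \not\leq z$ simultaneously. For (ii), given $x \leq y$ (with $x, y \in \mathrm{V}$): clause (a) asks that $y \not\leq \neg x$, and if $y \leq \neg x$ then transitivity would give $x \leq \neg x$, contradicting anti-inflationarity on the nonzero $x$; clause (b) is again vacuous since $x \leq y \leq z$ contradicts $x \not\leq z$.

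For (iii), assume $x \comp^\neg_\mathrm{V} y$ and $y \leq y'$ (with $y' \in \mathrm{V}$, which is the only way the conclusion typechecks). For clause (a) applied to $(x, y')$, if $y' \leq \neg x$ then $y \leq y' \leq \neg x$ contradicts clause (a) for $(x, y)$. For clause (b) applied to $(x, y')$, any $z$ with $y' \leq z$ and $x \not\leq z$ automatically satisfies $y \leq z$ via $y \leq y' \leq z$, so clause (b) for $(x, y)$ supplies an escape witness, which serves for $(x, y')$ as well.

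There is no real obstacle here: the lemma is a routine unfolding of the definition, and the only step where any hypothesis is actually used is clause (a), which each time reduces to the fact that $\neg$ is anti-inflationary on nonzero elements. Parts (ii) and (iii) are essentially monotonicity observations built into the second argument of $\comp^\neg_\mathrm{V}$ — note that no comparable monotonicity holds in the first argument, which is consistent with the asymmetric role of $x$ and $y$ in clause (b).
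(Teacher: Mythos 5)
Your proof is correct: the paper leaves this lemma as "easy to check," and your direct unfolding of Definition \ref{KeyDef}(\ref{KeyDef1}) — using anti-inflationarity on the nonzero elements of $\mathrm{V}$ for clause (a) and observing that clause (b) is vacuous or inherited via transitivity of $\leq$ — is exactly the intended verification. The only quibble is your closing remark that clause (a) "each time reduces to" anti-inflationarity: in part (iii) it instead reduces to clause (a) of the hypothesis $x\comp^\neg_\mathrm{V}y$, as your own argument correctly shows.
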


It will turn out (Proposition \ref{CompEscapeWorks}) that $L$ having compatible escape ensures that $\comp^\neg_\mathrm{V}$ satisfies the first suitability condition. First, we show that ortholattices and Heyting algebras are alike in having compatible escape, and  $\comp^\neg_\mathrm{V}$ reduces to familiar relations in ortholattices and complete perfect Heyting algebras.

\begin{proposition}\label{KeyDefApplies} Let $L$ be a lattice and $\mathrm{V}$ a join-dense set of nonzero elements.
\begin{enumerate}
\item\label{KeyDefApplies1} If $L$ is an ortholattice, then $x\comp^\neg_\mathrm{V}y$ iff $y\not\leq\neg x$ for $x,y\in \mathrm{V}$; hence $L$ has compatible escape in $\mathrm{V}$ with $\neg$.
\item\label{KeyDefApplies2} If $L$ is a p-algebra, then $L$ has compatible escape in $\mathrm{V}$ with $\neg$.
\item\label{KeyDefApplies3} If $L$ is a complete Heyting algebra and $\mathrm{V}=CJI(L)$, then $x\comp^\neg_\mathrm{V}y$ iff $x\leq y$ for $x,y\in\mathrm{V}$.
\end{enumerate}

\end{proposition}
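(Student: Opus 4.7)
The plan is to handle the three parts in turn, in each case unpacking the definition of $x\comp^\neg_\mathrm{V}y$ in the setting at hand.

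For part (\ref{KeyDefApplies1}), the forward direction of the equivalence is just the first clause of Definition \ref{KeyDef}(\ref{KeyDef1}). For the converse, if $y\not\leq\neg x$, the first clause is immediate, and for the second, whenever $x\not\leq z$, Lemma \ref{OrthoEscape} directly supplies that $x$ escapes $z$ in $\mathrm{V}$ with $\neg$. Compatible escape then follows by re-using the witness: if $a$ escapes $b$ via $c$, then $a\not\leq\neg c$, and by the just-proved equivalence (applied to the pair $c,a$, using that the involutivity of $\neg$ makes $a\not\leq\neg c$ symmetric in $a$ and $c$) we obtain $c\comp^\neg_\mathrm{V}a$, so $c$ itself witnesses that $a$ compatibly escapes $b$.

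For part (\ref{KeyDefApplies2}), the idea is to shrink the witness below $a$ using join-density. If $a$ escapes $b$ via $c\in\mathrm{V}$, then $a\wedge c\neq 0$ in the p-algebra (since $a\not\leq\neg c$), and as $\mathrm{V}$ is a join-dense set of nonzero elements, there exists $c'\in\mathrm{V}$ with $c'\leq a\wedge c$. Since $c'\leq a$, the relation $c'\comp^\neg_\mathrm{V}a$ holds trivially: the first clause because $a\wedge c'=c'\neq 0$, and the second vacuously because $c'\leq a\leq z$ forces $c'\leq z$. Finally, $c'\leq c$ together with $b\leq\neg c$ gives $b\wedge c'\leq b\wedge c=0$, i.e.~$b\leq\neg c'$, so $c'$ witnesses that $a$ compatibly escapes $b$.

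For part (\ref{KeyDefApplies3}), $(\Leftarrow)$ is immediate from Lemma \ref{3CompLem}(ii). For $(\Rightarrow)$, I argue contrapositively: assume $x,y\in CJI(L)$ with $x\not\leq y$, let $x^*$ be the unique lower cover of $x$ supplied by complete join-irreducibility, and set $z=x^*\vee y$. Then $y\leq z$, while complete join-primeness of $x$ (from CJI plus distributivity of the Heyting algebra) rules out $x\leq z$, since otherwise $x\leq x^*$ or $x\leq y$. Hence, if $x\comp^\neg_\mathrm{V}y$ held, its second clause would yield some $c\in CJI(L)$ with $x\wedge c\neq 0$ and $z\wedge c=0$, and distributivity gives $x^*\wedge c=0=y\wedge c$. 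A case split on whether $x\leq c$ closes the argument: if $x\leq c$, then $x\wedge y\leq c\wedge y=0$, contradicting the first clause $y\not\leq\neg x$; if $x\not\leq c$, then $x\wedge c<x$ forces $x\wedge c\leq x^*$ by the unique-lower-cover property, so $x\wedge c\leq x^*\wedge c=0$, contradicting $x\wedge c\neq 0$. The main obstacle lies precisely here: the choice $z=x^*\vee y$ and the case split depend on two distinctive features of a perfect Heyting algebra — distributivity (to break $z\wedge c$ into $x^*\wedge c$ and $y\wedge c$) and the unique lower cover of a CJI element (to pin down $x\wedge c$ when $x\not\leq c$) — whereas the other two parts are mechanical once the witness-choice ideas of reuse (part (\ref{KeyDefApplies1})) and refinement by join-density (part (\ref{KeyDefApplies2})) are in hand.
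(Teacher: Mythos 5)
Your argument is correct in all three parts. Parts (\ref{KeyDefApplies1}) and (\ref{KeyDefApplies2}) essentially coincide with the paper's proof: for (\ref{KeyDefApplies1}) the equivalence is Lemma \ref{OrthoEscape} plus the definition, and compatible escape follows by reusing the escape witness $c$ (your appeal to involutivity there is superfluous, since the equivalence applied to the pair $(c,a)$ asks exactly for $a\not\leq\neg c$, which the escape already provides); for (\ref{KeyDefApplies2}) the paper likewise picks $d\in\mathrm{V}$ with $d\leq a\wedge c$, getting $d\comp^\neg_\mathrm{V}a$ from Lemma \ref{3CompLem}(ii) and $b\leq\neg d$ via antitonicity of $\neg$ rather than your computation $b\wedge d\leq b\wedge c=0$ — a cosmetic difference. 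Part (\ref{KeyDefApplies3}) is where you genuinely diverge. The paper, assuming $x\comp^\neg_\mathrm{V}y$ and $x\not\leq y$, forms $w=\bigvee\{z\in L\mid y\leq z,\,x\not\leq z\}$, uses complete join-primeness of $x$ to get $x\not\leq w$, extracts an escape witness $v$ for $w$, and contradicts the maximality of $w$ by showing $y\leq w\vee v$, $x\not\leq w\vee v$, yet $w<w\vee v$. You instead take the single canonical separator $z=x^*\vee y$ built from the unique lower cover $x^*$ of the completely join-irreducible $x$, and finish with a case split on $x\leq c$ using finite distributivity, the lower-cover property, and the pseudocomplement. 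Both are sound; your route needs only finite join-primeness and a concrete meet computation where the paper invokes an arbitrary complete join and its maximality, while the paper's version avoids any mention of $x^*$ and the case analysis. Either could stand in the text.
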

\begin{proof} For part (\ref{KeyDefApplies1}), by definition, $x\comp^\neg_\mathrm{V} y$ implies  $y\not\leq\neg x$.  Conversely, suppose $y\not\leq\neg x$. To show  $x\comp^\neg_\mathrm{V} y$, suppose $y\leq z$ but $x\not\leq z$.  From $x\not\leq z$, it follows by Lemma \ref{OrthoEscape} that $x$ escapes $z$ in $\mathrm{V}$ with $\neg$. This shows  $x\comp^\neg_\mathrm{V} y$.

For (\ref{KeyDefApplies2}), suppose $a$ escapes $b$ in $\mathrm{V}$, so for some $c\in \mathrm{V}$, $a\not\leq\neg c$ but $b\leq\neg c$. Since $a\not\leq \neg c$ and $\neg$ is pseudocomplementation, we have $a\wedge c\neq 0$. From $b\leq\neg c$, we have $b\leq\neg (a\wedge c)$.  Since $\mathrm{V}$ is join-dense in $L$, there is a $d\in \mathrm{V}$ such that  $d\leq a\wedge c$, so $\neg (a\wedge c)\leq\neg d$. Then since $b\leq\neg (a\wedge c)$, we have $b\leq\neg d$. Moreover, by Lemma \ref{3CompLem}(ii), $d\leq a$ implies $d\comp^\neg_\mathrm{V} a$. Hence we have found a $d\comp^\neg_\mathrm{V} a$ such that $b\leq\neg d$. Thus, $a$ compatibly escapes $b$ in $\mathrm{V}$ with $\neg$.

For (\ref{KeyDefApplies3}), given Lemma \ref{3CompLem}(ii), we need only show that $x\comp^\neg_\mathrm{V}y$ implies $x\leq y$. Suppose $x\comp^\neg_\mathrm{V}y$ but $x\not\leq y$. Let $w=\bigvee\{z\in L\mid y\leq z,x\not\leq z\}$. Since $x\in CJI(L)$ and $L$ is a complete Heyting algebra, $x$ is completely join-prime, which implies $x\not\leq w$. Then since $x\comp^\neg_\mathrm{V}y$ and $y\leq w$,  $x$ escapes $w$, so for some $v\in\mathrm{V}$, $x\not\leq\neg v$ but $w\leq\neg v$ and thus $y\leq\neg v$. Hence $x\not\leq v$, for otherwise $\neg v\leq \neg x$ and so $y\leq\neg x$, contradicting $x\comp^\neg_\mathrm{V}y$. From $x\not\leq w$ and $x\not\leq v$, we have $x\not\leq w\vee v$, while  $y\leq w\vee v$. Since $w\leq\neg v$, we also have $w<w\vee v$. But together $y\leq w\vee v$, $x\not\leq w\vee v$,  and $w<w\vee v$ contradict the definition of $w$.\end{proof}

Crucially, there are other lattices with compatible escape besides ortholattices and p-algebras, as in $\mathbf{N}_5$ equipped with the $\neg$ operation in Fig.~\ref{NoEscape} (left).

\begin{proposition}\label{CompEscapeWorks} Let $L$ be a lattice, $\neg$  an anti-inflationary operation on $L$, and $\mathrm{V}$ a set of nonzero elements of $L$. If $L$ has compatible escape in $\mathrm{V}$ with $\neg$, then $\comp^\neg_\mathrm{V}$ satisfies the first suitability condition. 
\end{proposition}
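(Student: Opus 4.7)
My plan is to prove the first suitability condition by splitting into two cases based on whether $a$ escapes $b$ in $\mathrm{V}$ with $\neg$, exploiting the fact that Definition \ref{KeyDef}(\ref{KeyDef1}) bakes an ``escape'' clause directly into $\comp^\neg_\mathrm{V}$. Fix $a\in \mathrm{V}$ and $b\in L$ with $a\not\leq b$; the goal is to produce $a'\in\mathrm{V}$ with $a'\comp^\neg_\mathrm{V} a$ such that every $a''\in\mathrm{V}$ with $a'\comp^\neg_\mathrm{V} a''$ satisfies $a''\not\leq b$.

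First, suppose $a$ does \emph{not} escape $b$ in $\mathrm{V}$ with $\neg$. Then I would take $a'=a$; this is compatible with $a$ by reflexivity (Lemma \ref{3CompLem}(i)). If some $a''$ with $a\comp^\neg_\mathrm{V} a''$ satisfied $a''\leq b$, then taking $z=b$ in clause (b) of the definition of $\comp^\neg_\mathrm{V}$ (we have $a''\leq z$ and $a\not\leq z$) would force $a$ to escape $b$ in $\mathrm{V}$ with $\neg$, contradicting the case hypothesis. Hence $a''\not\leq b$.

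Second, suppose $a$ does escape $b$ in $\mathrm{V}$ with $\neg$. By the compatible escape assumption, $a$ compatibly escapes $b$, so there is some $c\in \mathrm{V}$ with $c\comp^\neg_\mathrm{V} a$ and $b\leq\neg c$. Take $a'=c$. For any $a''\in\mathrm{V}$ with $c\comp^\neg_\mathrm{V} a''$, clause (a) of the definition gives $a''\not\leq \neg c$; but $b\leq\neg c$ would imply $a''\leq b\leq\neg c$ if $a''\leq b$, contradiction. Hence $a''\not\leq b$.

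The only real content is recognizing that the two cases correspond exactly to the two parts of Definition \ref{KeyDef}(\ref{KeyDef1}): the ``extra'' clause (b) was introduced precisely to handle the non-escape case via $a'=a$, while compatible escape handles the escape case by producing a witness $c$ with $b\leq\neg c$. There is no real obstacle; the argument is a direct unpacking of definitions once the case split is made.
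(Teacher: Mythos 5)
Your proof is correct and follows essentially the same argument as the paper's: the same case split on whether $a$ escapes $b$, with $a'=a$ and clause (b) of Definition \ref{KeyDef}(\ref{KeyDef1}) handling the non-escape case, and the compatible-escape witness $c$ together with clause (a) handling the escape case. The only difference is the order in which you treat the two cases.
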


\begin{proof} For $a\in \mathrm{V}$ and $b\in L$, suppose $a\not\leq b$.

Case 1: $a$ escapes $b$. Then since $L$ has compatible escape in $\mathrm{V}$ with $\neg$, $a$ compatibly escapes $b$ using  some $c\in\mathrm{V}$, so $c\comp^\neg_\mathrm{V} a$ and $b\leq \neg c$. Let $a'=c$, so $a'\comp^\neg_\mathrm{V} a$. Suppose $a''\compflip^\neg_\mathrm{V} a'$, so $a''\not\leq\neg c$. Then since $b\leq\neg c$, we have $a''\not\leq b$. 

Case 2: $a$ does not escape $b$. Then let $a'=a$, so $a'\comp^\neg_\mathrm{V} a$. Suppose $a''\compflip^\neg_\mathrm{V} a'$. If $a''\leq b$, then since $a\not\leq b$, from $a\comp^\neg_\mathrm{V} a''$ it follows that $a$ escapes $b$, contradicting the assumption of the case. Hence $a''\not\leq b$.\end{proof}

\begin{corollary} Let $L$ be a lattice and $\mathrm{V}$ a join-dense set of nonzero elements.
\begin{enumerate}
\item If $L$ is an ortholattice, then $\comp^\neg_\mathrm{V}$ satisfies the first suitability condition.
\item If $L$ is a p-algebra, then $\comp^\neg_\mathrm{V}$ satisfies the first suitability condition.
\end{enumerate}
\end{corollary}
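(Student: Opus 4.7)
The plan is to obtain this corollary as an immediate composition of Proposition \ref{KeyDefApplies} and Proposition \ref{CompEscapeWorks}, since Proposition \ref{CompEscapeWorks} turns ``compatible escape'' into ``first suitability condition,'' and Proposition \ref{KeyDefApplies}(\ref{KeyDefApplies1})--(\ref{KeyDefApplies2}) establishes compatible escape in precisely the two cases of interest.

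First I would check that the hypotheses of Proposition \ref{CompEscapeWorks} are met, i.e., that the relevant $\neg$ is anti-inflationary. In an ortholattice, if $a$ is nonzero and $a\leq\neg a$, then $a=a\wedge\neg a=0$, a contradiction; hence orthocomplementation is anti-inflationary. The same argument works for a p-algebra, since pseudocomplementation satisfies $a\wedge\neg a=0$. So in both cases we may invoke Proposition \ref{CompEscapeWorks}.

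Then for part (i), Proposition \ref{KeyDefApplies}(\ref{KeyDefApplies1}) gives that any ortholattice $L$ with join-dense $\mathrm{V}$ of nonzero elements has compatible escape in $\mathrm{V}$ with $\neg$; by Proposition \ref{CompEscapeWorks}, $\comp^\neg_\mathrm{V}$ satisfies the first suitability condition. Part (ii) is analogous, using Proposition \ref{KeyDefApplies}(\ref{KeyDefApplies2}) in place of (\ref{KeyDefApplies1}).

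There really is no obstacle here: the corollary is a two-line composition of already-established lemmas, and the only thing worth making explicit (perhaps in a single sentence) is the verification that both orthocomplementation and pseudocomplementation satisfy the anti-inflationary hypothesis demanded by Definition \ref{KeyDef} and hence by Proposition \ref{CompEscapeWorks}.
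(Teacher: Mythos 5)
Your proposal is correct and matches the paper's intent exactly: the corollary is stated without proof precisely because it is the immediate composition of Proposition \ref{KeyDefApplies}(\ref{KeyDefApplies1})--(\ref{KeyDefApplies2}) with Proposition \ref{CompEscapeWorks}. Your explicit check that orthocomplementation and pseudocomplementation are anti-inflationary is a sensible addition (the paper covers it implicitly via the remark that an antitone $\neg$ is anti-inflationary iff it is a semicomplementation), but it does not change the route.
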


\subsubsection{The second suitability condition}

We can treat the second suitability condition more quickly. 

\begin{proposition}\label{SecondSuit} 
\begin{enumerate}
\item If $L$ is a complete ortholattice and $\mathrm{V}$ a join-dense set of nonzero elements of $L$, then $\comp^\neg_\mathrm{V}$ satisfies the second suitability condition.
\item If $L$ is a complete Heyting algebra and $\mathrm{V}=CJI(L)$ is join-dense in $L$, then $\comp^\neg_\mathrm{V}$ satisfies the second suitability condition.
\end{enumerate}
\end{proposition}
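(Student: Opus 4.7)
My plan is to handle the two parts separately by exploiting the explicit descriptions of $\comp^\neg_\mathrm{V}$ supplied by Proposition \ref{KeyDefApplies}.

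For part (i), I would argue by contradiction. Assume $a\in\mathrm{V}$, $B$ is a $c_\comp$-fixpoint with join $b$ in $L$, $a\leq b$, and $a\notin B$. Since $a\notin c_\comp(B)=B$, there is some $a'\comp^\neg_\mathrm{V} a$ such that for every $a''$ with $a'\comp^\neg_\mathrm{V} a''$, $a''\notin B$. By the ortholattice characterization from Proposition \ref{KeyDefApplies}(\ref{KeyDefApplies1}), $x\comp^\neg_\mathrm{V} y$ is equivalent to $y\not\leq\neg x$; contrapositively, every $a''\in B$ satisfies $a''\leq\neg a'$. Hence $\neg a'$ is an upper bound of $B$, so $b=\bigvee B\leq\neg a'$, and since $a\leq b$, also $a\leq\neg a'$. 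But $a'\comp^\neg_\mathrm{V} a$ says precisely that $a\not\leq\neg a'$, a contradiction.

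For part (ii), Proposition \ref{KeyDefApplies}(\ref{KeyDefApplies3}) reduces $\comp^\neg_\mathrm{V}$ on $\mathrm{V}=CJI(L)$ to the order $\leq$. Applying Proposition \ref{PWS}(\ref{PWS2}) to the compatibility frame $(CJI(L),\leq)$, the $c_\comp$-fixpoints are exactly the $\leq$-downsets of $CJI(L)$. Given such a downset $B$ with $b=\bigvee B$ and $a\leq b$, I would invoke the standard fact that in a complete Heyting algebra every completely join-irreducible element is completely join-prime: from the infinite distributive law $a=a\wedge\bigvee B=\bigvee_{b'\in B}(a\wedge b')$ and complete join-irreducibility, $a=a\wedge b'$ for some $b'\in B$, i.e.\ $a\leq b'$. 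Since $B$ is a downset, $a\in B$.

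Neither part poses a real obstacle once the characterizations in Proposition \ref{KeyDefApplies} are in hand; the substantive content is hidden there. The two ingredients one must not forget to invoke are the ortholattice-specific simplification of $\comp^\neg_\mathrm{V}$ in (i)—which crucially uses that $\neg$ is a full orthocomplementation and not merely a protocomplementation—and the equivalence of complete join-irreducibility and complete join-primeness in complete Heyting algebras in (ii). Both are genuinely needed: analogous statements for p-algebras in general, or for non-perfect complete Heyting algebras where $CJI(L)$ is not join-dense, would fail.
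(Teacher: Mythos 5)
Your proof is correct and follows essentially the same route as the paper: part (i) is the contrapositive form of the paper's direct argument (both hinge on Proposition \ref{KeyDefApplies}(\ref{KeyDefApplies1}) and the fact that $\bigvee B\leq\neg a'$ forces every, hence some, element of $B$ under $\neg a'$), and part (ii) rests on the same key fact that completely join-irreducibles in a complete Heyting algebra are completely join-prime, with your detour through Propositions \ref{KeyDefApplies}(\ref{KeyDefApplies3}) and \ref{PWS}(\ref{PWS2}) playing the role of the paper's appeal to Lemma \ref{3CompLem}(iii).
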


\begin{proof} Suppose $L$ is an ortholattice, $a\in \mathrm{V}$, $B$ is a $c_{\comp^\neg_\mathrm{V}}$-fixpoint, and $a\leq b=\bigvee B$. Suppose $a'\comp^\neg_\mathrm{V} a$, so $a\not\leq\neg a'$ and hence $b\not\leq\neg a'$. Then for some $b_0\in B$, we have $b_0\not\leq \neg a'$. Now let $a''=b_0$, so $a''\in B$. Since $a''\not\leq\neg a'$, by Proposition \ref{KeyDefApplies}(\ref{KeyDefApplies1}) we have $a'\comp ^\neg_\mathrm{V} a''$. Given that $B$ is a $c_{\comp^\neg_\mathrm{V}}$-fixpoint, this shows  $a\in B$.

Suppose $L$ is a complete Heyting algebra, $a\in \mathrm{V}$, $B$ is a $c_{\comp^\neg_\mathrm{V}}$-fixpoint, and $a\leq \bigvee B$. Since $L$ is a complete Heyting algebra and $\mathrm{V}=CJI(L)$, $a\in \mathrm{V}$ implies that $a$ is completely join-prime. Hence $a\leq b_0$ for some $b_0\in B$.  Now consider any $a'\comp^\neg_\mathrm{V} a$. Let $a''=b_0$, so $a''\in B$. Since $a'\comp^\neg_\mathrm{V}  a\leq a''$, by Lemma \ref{3CompLem}(iii) we have $a'\comp^\neg_\mathrm{V}a''$. Given that $B$ is a $c_{\comp^\neg_\mathrm{V}}$-fixpoint, this shows $a\in B$.\end{proof}

\subsubsection{Representation theorem}\label{FirstRepThmSec}

Combining Propositions \ref{SpecialRep}-\ref{SecondSuit}, we have the following representation theorem.

\begin{theorem}\label{RepThm1} Let $L$ be a complete lattice and $\mathrm{V}$ a join-dense set of nonzero elements of $L$.
\begin{enumerate}
\item If $L$ is an ortholattice, then $L$ is isomorphic to $\lat(\mathrm{V},\comp^\neg_\mathrm{V})$.\footnote{The orthocomplementation of $L$ is represented as $\neg_{\comp^\neg_\mathrm{V}}$ by Propositions \ref{EconRepNeg} and \ref{EconRepNegOrtho}.}
\item If $L$ is a  Heyting algebra and $\mathrm{V}=CJI(L)$, $L$ is isomorphic to $\lat(\mathrm{V},\comp^\neg_\mathrm{V})$.
\item\label{RepThm1-3} If $\neg$ is an anti-inflationary operation on $L$ such that $L$ has compatible escape in $\mathrm{V}$ with $\neg$, and $\comp^\neg_\mathrm{V}$ satisfies the second suitability condition, then $L$ is isomorphic to $\lat(\mathrm{V},\comp^\neg_\mathrm{V})$.  
\end{enumerate}
\end{theorem}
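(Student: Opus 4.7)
My plan is to reduce each of the three parts to Proposition \ref{SpecialRep}(\ref{SpecialRep2}), which already supplies the isomorphism $\varphi(b) = \{x \in \mathrm{V} \mid x \leq b\}$ from $L$ to $\lat(\mathrm{V},\comp)$ as soon as $L$ is complete, $\mathrm{V}$ is join-dense, and $\comp$ is a suitable compatibility relation. Reflexivity of $\comp^\neg_\mathrm{V}$ comes for free from Lemma \ref{3CompLem}(i), so in each case the task collapses to verifying the two suitability conditions of Definition preceding Proposition \ref{SpecialRep}.

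I would dispatch part (\ref{RepThm1-3}) first, since it is essentially immediate. By hypothesis, $L$ has compatible escape in $\mathrm{V}$ with $\neg$, so Proposition \ref{CompEscapeWorks} delivers the first suitability condition; the second is assumed outright. Proposition \ref{SpecialRep}(\ref{SpecialRep2}) then produces the desired isomorphism.

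Parts (i) and (ii) then follow by showing that their hypotheses entail those of (iii). For the ortholattice case, Proposition \ref{KeyDefApplies}(\ref{KeyDefApplies1}) establishes compatible escape (in fact $x \comp^\neg_\mathrm{V} y$ collapses to $y \not\leq \neg x$), and Proposition \ref{SecondSuit}(1) supplies the second suitability condition. For the Heyting case, reading $L$ as a p-algebra via $\neg a = a \to 0$ makes Proposition \ref{KeyDefApplies}(\ref{KeyDefApplies2}) applicable to yield compatible escape, while Proposition \ref{SecondSuit}(2), which leans crucially on completeness of $L$ together with $\mathrm{V} = CJI(L)$ and hence the complete join-primeness of elements of $\mathrm{V}$, supplies the second suitability condition.

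There is no substantive obstacle here; the statement is fundamentally a packaging of the results already established in \S~\ref{SpecialRepSec}. The only minor housekeeping points are to confirm that $\mathrm{V}$ consists of nonzero elements (explicit in (i) and (iii), and automatic in (ii) because $0 \notin CJI(L)$) and that each relevant $\neg$ is anti-inflationary, so that Definition \ref{KeyDef} is actually in force: orthocomplementations and pseudocomplementations are anti-inflationary on nonzero elements, and (iii) assumes this directly.
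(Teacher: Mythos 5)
Your proposal is correct and matches the paper's own proof, which is stated simply as ``Combining Propositions \ref{SpecialRep}--\ref{SecondSuit}'': part (iii) from Propositions \ref{CompEscapeWorks} and \ref{SpecialRep}, and parts (i)--(ii) by supplying compatible escape via Proposition \ref{KeyDefApplies} and the second suitability condition via Proposition \ref{SecondSuit}. Your housekeeping remarks (reflexivity from Lemma \ref{3CompLem}(i), nonzeroness of $\mathrm{V}$, anti-inflationarity of the relevant $\neg$) are accurate and fill in exactly what the paper leaves implicit.
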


Part (\ref{RepThm1-3}) applies to many non-ortholattice and non-Heyting examples, as in Fig.~\ref{FirstFig}, but its precise scope is an open question.

\begin{question}\label{CharQues} For which complete lattices $L$ is there a set $\mathrm{V}$ of nonzero elements and an anti-inflationary $\neg $ on $L$ such that $L$ is isomorphic to $\lat(\mathrm{V},\comp^\neg_\mathrm{V})$?
\end{question} 
\noindent The Jupyter notebook cited in \S~\ref{Intro} verifies that every lattice $L$  up to size 8 is such an $L$. It also verifies the following for every lattice up to size 16 (cf.~Footnote~\ref{Bipartite}).

\begin{conjecture}\label{Conj} For any nondegenerate finite lattice $L$, there is a compatibility frame $(X,\comp)$ with $|X|< |L|$ such that $L$ is isomorphic to $\lat(X,\comp)$.
\end{conjecture}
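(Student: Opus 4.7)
The plan is to apply Theorem \ref{RepThm1}(\ref{RepThm1-3}) with $\mathrm{V}=J(L)\setminus\{0\}$, the set of nonzero completely join-irreducible elements of $L$. Since $L$ is finite, $\mathrm{V}$ is join-dense in $L$; and since $0\notin\mathrm{V}$, we have $|\mathrm{V}|\le|L|-1<|L|$, giving the required size bound. The remaining task is to produce an anti-inflationary operation $\neg$ on $L$ such that $L$ has compatible escape in $\mathrm{V}$ with $\neg$ and $\comp^\neg_\mathrm{V}$ satisfies the second suitability condition; Theorem \ref{RepThm1}(\ref{RepThm1-3}) will then yield $L\cong\lat(\mathrm{V},\comp^\neg_\mathrm{V})$.

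The guiding idea is to tune $\neg$ so that the only $c_{\comp^\neg_\mathrm{V}}$-fixpoints are the principal downsets $\varphi(b)=\{x\in\mathrm{V}\mid x\le b\}$ for $b\in L$. For each $a\in\mathrm{V}$, one prescribes which elements $x\in\mathrm{V}$ should fail to be compatible with $a$; the cheapest way to block $x\comp^\neg_\mathrm{V} a$ via condition (1) of Definition \ref{KeyDef}(\ref{KeyDef1}) is to arrange $a\le\neg x$, so $\neg x$ must dominate exactly $x$'s desired non-compatibles. For ortholattices this is forced ($\neg$ is the orthocomplement), and for perfect Heyting algebras one may take $\neg a=\bigvee(\mathrm{V}\setminus\uparrow a)$, as covered by Theorem \ref{RepThm1}(i)-(ii). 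For general $L$, my first attempt would be the recipe: let $\neg a$ be a chosen maximal element of $L\setminus\uparrow a$ for $a\neq 0$, and set $\neg 0=1$. This is anti-inflationary by construction. One would then verify compatible escape (using the maximality of $\neg a$ to control condition (2) of Definition \ref{KeyDef}(\ref{KeyDef1})) and the second suitability condition (using that any $a\in\mathrm{V}$ with $a\leq\bigvee B$ is forced into the fixpoint $B$ by the way $\neg a'$ bounds the elements of $B$ for each $a'\comp^\neg_\mathrm{V}a$).

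The main obstacle is that the naive ``maximal'' recipe already fails on non-distributive lattices like $\mathbf{N}_5$: direct computation shows that if one picks $\neg b$ as the wrong maximal element of $L\setminus\uparrow b$, the resulting $\comp^\neg_\mathrm{V}$ becomes a $3$-cycle and $\lat(\mathrm{V},\comp^\neg_\mathrm{V})$ collapses to $\mathbf{M}_3$; the correct choice on $\mathbf{N}_5$ requires picking $\neg$ non-maximally at certain elements. One therefore needs a more refined recipe, perhaps prescribing $\neg$ inductively on the height of elements in $L$, or determining $\neg a$ by local data around $a$ such as the join-irreducibles in $[0,a]$ and their covers. Given that the conjecture is computationally verified up to $|L|=16$ but remains open, a complete proof is likely to require either a genuinely new construction of $\comp$ on $J(L)\setminus\{0\}$ that sidesteps Theorem \ref{RepThm1}, or a structural induction on $L$ (e.g., via subdirectly irreducible quotients) that forces the right choice of $\neg$ step by step.
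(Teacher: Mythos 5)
This statement is an open conjecture: the paper offers no proof of it, only computational verification for all lattices with $|L|\leq 16$ and a remark (Footnote \ref{Bipartite}) reducing it to an unproven combinatorial claim. So there is no ``paper proof'' to match, and your proposal, which you candidly present as a plan with an unresolved core step, is likewise not a proof. The genuine gap is exactly the one you name: you have no general construction of an anti-inflationary $\neg$ on an arbitrary finite lattice for which compatible escape and the second suitability condition both hold with $\mathrm{V}=J(L)\setminus\{0\}$, and you correctly observe that the naive ``maximal semicomplement'' recipe already fails on $\mathbf{N}_5$. Note also that your route is really an attack on the harder Question \ref{CharQues} restricted to $\mathrm{V}=J(L)\setminus\{0\}$; the paper only verifies that question up to size $8$, versus size $16$ for the conjecture, and nothing forces the witnessing $\mathrm{V}$ in Question \ref{CharQues} to be the join-irreducibles, so even a positive answer to the question in your specific form is not known.

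It is worth flagging that the paper's own intended line of attack is different from yours. Footnote \ref{Bipartite} observes that taking $\mathrm{V}$ and $\Lambda$ to be the join- and meet-irreducibles, the frames $P_1$ of Proposition \ref{GoodEx}(\ref{GoodEx1}) are exactly the edge covers of the bipartite graph $(\mathrm{V},\Lambda,P_0)$, and $\lat(P_1,\comp)$ is the MacNeille completion of $L$, which is $L$ itself for finite $L$. Hence the conjecture would follow from the purely combinatorial claim that every finite lattice's bipartite graph of irreducibles admits an edge cover of size $<|L|$. That reduction requires no choice of negation and no verification of suitability conditions, at the price of a frame built from pairs rather than from elements of $L$; your route, if it could be completed, would give the sharper bound $|X|\leq|L|-1$ witnessed by join-irreducibles alone. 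Both remain open, and neither subsumes the other: a lattice could conceivably defeat the $\comp^\neg_\mathrm{V}$ construction for every $\neg$ while still admitting a small edge cover.
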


Finally, Appendix \ref{NegAppendix} modifies (\ref{RepThm1-3}) to represent not only $L$ but also~$(L,\neg)$.

\subsection{Representation of arbitrary complete lattices}\label{ArbCompLat}

In this section, we turn to the representation of arbitrary complete lattices. Instead of representing a lattice $L$ using special elements of $L$ as in \S~\ref{SpecialRepSec}, here we use a potentially less economical representation in terms of pairs of elements, as in the birelational representations of complete lattices in \cite{Allwein2001,BH2019,Holliday2021,Massas2020}.

\begin{definition}\label{Good} Let $L$ be a lattice and $P$ a set of pairs of elements of $L$. Define a binary relation $\comp$ on $P$ by 
$(a,b)\comp (c,d)$ if $c\not\leq b$. Then we say $P$ is \textit{separating} if for all $a,b\in L$:
\begin{enumerate}
\item\label{Good2} if $a\not\leq b$, then there is a $(c,d)\in P$ with $c\leq a$ and $c\not\leq b$;
\item\label{Good3} for all $(c,d)\in P$, if $c\not\leq b$, then there is a $(c',d')\comp (c,d)$ such that for all $(c'',d'')\compflip (c',d')$, we have $c''\not\leq b$.
\end{enumerate}
\end{definition}

\begin{proposition}\label{CompRep} Let $L$ be a lattice and $P$ a separating set of pairs of elements of $L$. For $a\in L$, define $\varphi(a)= \{(x,y) \in P\mid x\leq a\}$. Then:
\begin{enumerate}
\item\label{CompRep1} $\varphi$ is a lattice embedding of $L$ into $\lat(P,\comp)$;
\item\label{CompRep2} if $L$ is complete, then $\varphi$ is an isomorphism from $L$ to $\lat(P,\comp)$.
\end{enumerate}
\end{proposition}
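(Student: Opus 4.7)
The plan is to unpack the two separating conditions together with the defining equivalence $(a,b)\comp(c,d) \iff c\not\leq b$, and verify the standard checks for a lattice embedding in part (\ref{CompRep1}), followed by surjectivity in part (\ref{CompRep2}).

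For part (\ref{CompRep1}), I first check that each $\varphi(a)$ is a $c_\comp$-fixpoint: if $(x,y)\in P$ with $x\not\leq a$, then separating condition (\ref{Good3}) applied with $b=a$ yields $(x',y')\comp(x,y)$ such that every $(x'',y'')\compflip(x',y')$ has $x''\not\leq a$, i.e., lies outside $\varphi(a)$. Order-preservation of $\varphi$ is immediate, and condition (\ref{Good2}) gives order-reflection (hence injectivity): if $a\not\leq b$, some $(c,d)\in P$ has $c\leq a$ and $c\not\leq b$, so $(c,d)\in\varphi(a)\setminus\varphi(b)$. Meet-preservation is trivial, since $\varphi(a\wedge b)=\varphi(a)\cap\varphi(b)$ and intersection is the meet in $\lat(P,\comp)$.

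The only technical step in (\ref{CompRep1}) is join-preservation. The inclusion $c_\comp(\varphi(a)\cup\varphi(b))\subseteq\varphi(a\vee b)$ follows from monotonicity, since $\varphi(a\vee b)$ is a $c_\comp$-fixpoint containing $\varphi(a)\cup\varphi(b)$. For the reverse inclusion, take $(x,y)\in\varphi(a\vee b)$ and any $(x',y')\comp(x,y)$; then $x\not\leq y'$, and since $x\leq a\vee b$ we must have $a\not\leq y'$ or $b\not\leq y'$. Condition (\ref{Good2}) then produces $(c,d)\in P$ with $c\leq a$ (resp.~$c\leq b$) and $c\not\leq y'$, so $(c,d)\in\varphi(a)\cup\varphi(b)$ and $(c,d)\compflip(x',y')$, placing $(x,y)$ in $c_\comp(\varphi(a)\cup\varphi(b))$.

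For part (\ref{CompRep2}), assuming $L$ complete, I would show $\varphi$ is surjective. Given a $c_\comp$-fixpoint $B$, let $a=\bigvee\{x\mid\exists y\,(x,y)\in B\}$; clearly $B\subseteq\varphi(a)$. Conversely, suppose $(x,y)\in\varphi(a)\setminus B$; since $B=c_\comp(B)$, there is $(x',y')\comp(x,y)$ (so $x\not\leq y'$) such that no $(x'',y'')\compflip(x',y')$ lies in $B$, i.e., every $(x'',y'')\in B$ has $x''\leq y'$. Hence $a\leq y'$, contradicting $x\leq a$ and $x\not\leq y'$. The main obstacle is the join-preservation step in (\ref{CompRep1}), where both separating conditions and the asymmetry of $\comp$ must be tracked simultaneously; the remaining checks are direct symbol-chases.
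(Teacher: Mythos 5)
Your proposal is correct and follows essentially the same route as the paper: condition (ii) of the definition of separating gives that each $\varphi(a)$ is a $c_\comp$-fixpoint, condition (i) gives injectivity and the nontrivial half of join-preservation, and surjectivity uses the join $a=\bigvee\{x\mid \exists y\,(x,y)\in B\}$. The only cosmetic difference is that you phrase the inclusion $\varphi(a)\subseteq B$ as a proof by contradiction, whereas the paper directly verifies the fixpoint membership condition for $(c,d)\in\varphi(a)$; the underlying step ($c\leq a$ and $c\not\leq d'$ force some pair in $B$ whose first coordinate is $\not\leq d'$) is identical.
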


\begin{proof} For part (\ref{CompRep1}), condition (\ref{Good3}) of Definition \ref{Good} implies that $\varphi(b)$ is a $c_\comp$-fixpoint for each $b\in L$. Clearly $\varphi$ preserves meet.  For join, if $(x,y)\in \varphi(a\vee b)$ and $(x',y')\comp (x,y)$, then $a\not\leq y'$ or $b\not\leq y'$, so (\ref{Good2}) of Definition \ref{Good} yields an $(x'',y'')\in \varphi(a)\cup\varphi(b)$ with $(x',y')\comp (x'',y'')$.  It also ensures that $\varphi$ is injective.

For part (\ref{CompRep2}), we claim $\varphi$ is surjective. Given a $c_{\comp}$-fixpoint $A$, define $a=\bigvee\{a_i\mid \exists b_i:(a_i,b_i)\in A\}$. We claim $A=\varphi(a)$. For $A\subseteq\varphi(a)$, suppose $(a_i,b_i)\in A$. Then by definition of $a$,  $a_i\leq a$, so $(a_i,b_i)\in  \varphi(a)$. For $A\supseteq\varphi(a)$, suppose $(c,d)\in \varphi(a)$, so $c\leq a$. Since $A$ is a $c_{\comp}$-fixpoint, to show $(c,d)\in A$, it suffices to show that for every $(c,'d')\comp (c,d)$ there is a $(c'',d'')\compflip (c',d')$ with $(c'',d'')\in A$. Suppose $(c,'d')\comp (c,d)$, so $c\not\leq d'$, which with $c\leq a$ implies $a\not\leq d'$. Then for some $(a_i,b_i)\in A$, we have $a_i\not\leq d'$.  Setting $(c'',d'')=(a_i,b_i)$, from $a_i\not\leq d'$ we have $(c',d')\comp (c'',d'')$, and $(c'',d'')\in A$, so we are done.
\end{proof}

Every lattice has a separating set of pairs, and for ortholattices and Heyting algebras we can cut down the sets of pairs, as in Proposition \ref{GoodEx}. Compare part (\ref{GoodEx1})  to the representation of a complete lattice by join-dense and meet-dense sets in \cite{Markowsky1973,Markowsky1975} or in the fundamental theorem of concept lattices \cite[Thm.~3.9]{Davey2002}.

\begin{proposition}\label{GoodEx} Let $L$ be a lattice, $\mathrm{V}$ a join-dense set of elements of $L$, and $\Lambda$ a meet-dense set of elements of $L$. Then:
\begin{enumerate}
\item\label{GoodEx1} if $P_1$ is a subset of $P_0=\{(a,b) \mid a\in \mathrm{V},b\in \Lambda,a\not\leq b\}$ such that for each $a\in \mathrm{V}$ there is a $b\in\Lambda$ with $(a,b)\in P_1$, and for each $b\in\Lambda$ there is an $a\in \mathrm{V}$ with $(a,b)\in P_1$, then $P_1$ is separating;\footnote{\label{Bipartite}Here $P_1$ is an edge cover for the bipartite graph $(\mathrm{V},\Lambda, P_0)$. If for every finite lattice $L$, the smallest edge cover for the bipartite graph of its join- and meet-irreducibles has cardinality less than $|L|$ (as we verified for all $L$ with $|L|\leq 16$ using \cite{Gebhardt2020}), then Conjecture \ref{Conj} is true.}
\item\label{GoodEx2} if $L$ is an ortholattice, then $P_2=\{(a,\neg a)\mid a\in \mathrm{V}, a\neq 0\}$ is separating;
\item\label{GoodEx3} if $L$ is Heyting, then $P_3=\{(a,a\to b)\mid a\in \mathrm{V},b\in \Lambda ,a\not\leq b\}$ is separating.
\end{enumerate}
\end{proposition}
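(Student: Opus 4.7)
The plan is to verify, for each of the three proposed sets of pairs, both clauses of Definition~\ref{Good}. Clause~(i)---supplying a witness $(c,d) \in P_i$ with $c \leq a$ and $c \not\leq b$ whenever $a \not\leq b$---will be essentially routine from the join-density of $\mathrm{V}$, so the heart of the proof lies in clause~(ii). The organizing observation is that, since $(c'',d'') \compflip (c',d')$ unfolds to $c'' \not\leq d'$, clause~(ii) is satisfied as soon as one can exhibit a pair $(c',d') \in P_i$ with $c \not\leq d'$ (ensuring $(c',d') \comp (c,d)$) \emph{and} $b \leq d'$: the contrapositive then forces any $(c'',d'')$ with $c'' \not\leq d'$ to satisfy $c'' \not\leq b$. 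So for each $P_i$ the task reduces to finding a pair in $P_i$ whose second coordinate dominates $b$ while not being dominated by $c$.

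For part~(\ref{GoodEx1}), meet-density of $\Lambda$ applied to $b$ produces a $b_0 \in \Lambda$ with $b \leq b_0$ and $c \not\leq b_0$, and the edge-cover hypothesis furnishes a matching first coordinate $a_0 \in \mathrm{V}$ with $(a_0, b_0) \in P_1$; that is the desired pair, and clause~(i) is handled by the symmetric half of the edge-cover hypothesis. For part~(\ref{GoodEx2}), second coordinates are forced to be of the form $\neg a_0$ with $a_0 \in \mathrm{V} \setminus \{0\}$, so antitonicity and involutivity of $\neg$ reduce the two requirements to $a_0 \leq \neg b$ and $a_0 \not\leq \neg c$; both are met by applying join-density of $\mathrm{V}$ to $\neg b$ and invoking $c \not\leq b \Rightarrow \neg b \not\leq \neg c$ to pick out a suitable summand (nonzero because $0 \leq \neg c$). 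For part~(\ref{GoodEx3}), where second coordinates must be implications $c' \to b_0$, Heyting residuation does the work: picking $b_0 \in \Lambda$ as in part~(\ref{GoodEx1}) and setting $(c',d') = (c, c \to b_0)$ (which lies in $P_3$ because $c \not\leq b_0$), compatibility with $(c,c\to b')$ reduces via residuation to $c \not\leq b_0$, and for any $(c'',c''\to b''')$ with $c'' \not\leq c \to b_0$, residuation gives $c \wedge c'' \not\leq b_0$, whence $c'' \leq b$ would force $c \wedge c'' \leq b \leq b_0$, a contradiction.

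The only real obstacle is parsing the asymmetric form of clause~(ii), where the universally quantified $(c'',d'')$ ranges over all of $P$: without the ``$b \leq d'$'' recipe above, one can be tempted to hunt for a $d'$ close to $b$ and get stuck on the universal quantifier. Once that template is identified, each part is a short application of density together with the algebraic identity native to the structure---nothing beyond the edge-cover hypothesis for part~(\ref{GoodEx1}), antitone involution for part~(\ref{GoodEx2}), and residuation for part~(\ref{GoodEx3}).
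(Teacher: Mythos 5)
Your proposal is correct and follows essentially the same route as the paper: the ``find $(c',d')$ with $c\not\leq d'$ and $b\leq d'$'' template is exactly what the paper's proof implements in each case, using meet-density of $\Lambda$ plus the edge-cover hypothesis for $P_1$, join-density applied to $\neg b$ with antitone involution for $P_2$, and residuation with the witness $(c, c\to b_i)$ for $P_3$. Aside from a couple of stray notational slips ($b'$, $b'''$), the argument matches the paper's.
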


\begin{proof} In each case, that condition (\ref{Good2}) of Definition \ref{Good} is satisfied is obvious, so we focus on condition (\ref{Good3}). 

For $P_1$, suppose $(c,d)\in P_1$ and $c\not\leq b$. Then where $b=\bigwedge \{b_i\in\Lambda\mid i\in I\}$, there is some $b_i\in\Lambda$ such that $c\not\leq b_i$, but $b\leq b_i$, and some $a\in\mathrm{V}$ with $(a,b_i)\in P_1$. Let $(c',d')=(a,b_i)$, so $(c',d')\comp (c,d)$. Now consider any $(c'',d'')\in P_1$ with $(c',d')\comp (c'',d'')$. Then $c''\not\leq d'=b_i$, so $c''\not\leq b$. 

For the ortholattice case with $P_2$, suppose $(c,d)=(c,\neg c)\in P_2$ and $c\not\leq b$. Hence $\neg b\not\leq \neg c$. Then where $\neg b=\bigvee\{x_i\in\mathrm{V}\mid i\in I\}$, for some $i\in I$ we have $x_i\not\leq \neg c$, but $x_i\leq \neg b$, so  $c\not\not\leq \neg x_i$ and $b\leq\neg x_i$. Let $(c',d')=(x_i,\neg x_i)$. Since $c\not\leq \neg x_i$, we have $(c',d')\comp (c,d)$. Now consider any $(c'',d'')\in P_2$ with $(c',d')\comp (c'',d'')$. Then $c''\not\leq d'=\neg x_i$, which with $b\leq\neg x_i$ implies $c''\not\leq b$. 

For the Heyting case with $P_3$, suppose $(c,d)=(c,c\to e)\in P_3$ and $c\not\leq b$. Then where $b=\bigwedge \{b_i\in\Lambda\mid i\in I\}$, there is some $b_i\in\Lambda$ such that $c\not\leq b_i$, but $b\leq b_i$. From $c\not\leq b_i$, we have $c\not\leq c\to b_i$.  Then where $(c',d')=(c,c\to b_i)$, we have $(c',d')\in P_3$ and $(c',d')\comp (c,d)$. Now consider any $(c'',d'')\in P_3$ such that $(c',d')\comp (c'',d'')$, so $c''\not\leq d'=c\to b_i$. Then $c''\wedge c\not\leq b_i$, which with $b\leq b_i$ implies $c''\wedge c\not\leq b$ and hence $c''\not\leq b$.\end{proof}

By Propositions \ref{CompRep} and \ref{GoodEx}, $L$ embeds into $\lat(P_1,\comp)$.  As the image of the embedding is join-dense and meet-dense in $\lat(P_1,\comp)$, it follows that $\lat(P_1,\comp)$ is (up to isomorphism) the MacNeille completion of $L$ (see  \cite[Thm.~2.2]{Gehrke2005}), and similarly for $\lat(P_2,\comp)$ and $\lat(P_3,\comp)$ in the ortholattice\footnote{It is also easy to check that $\varphi$ respects the orthocomplementation: $\varphi(\neg a)=\neg_{\comp}\varphi(a)$.} and Heyting cases.

Recall from Propositions \ref{IsOrtho} and \ref{HeytBool} how symmetric, compossible, and symmetric compossible compatibility frames give rise to ortholattices, Heyting algebras, and Boolean algebras, respectively. We now prove a converse.

\begin{proposition}\label{GoodFr} Let $L$ be a lattice, $\mathrm{V}$ a join-dense set of elements of $L$, and $\Lambda$ a meet-dense set of elements of $L$. Let $P_1$, $P_2$, and $P_3$ be defined from $\mathrm{V}$ and $\Lambda$ as in Proposition \ref{GoodEx}. Then:
\begin{enumerate}
\item\label{GoodFr1} $(P_1,\comp)$ is a compatibility frame;
\item\label{GoodFr2} if $L$ is an ortholattice, then $(P_2,\comp)$ is a symmetric frame;
\item\label{GoodFr3} if $L$ is a Heyting algebra, then $(P_3,\comp)$ is a compossible frame;
\item\label{GoodFr4} if $L$ is a Boolean algebra, then $(P_2,\comp)$ is a symmetric compossible frame.
\end{enumerate}
\end{proposition}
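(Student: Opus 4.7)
The four parts are handled in increasing order of subtlety, all by unpacking the definition $(a,b)\comp(c,d) \Leftrightarrow c\not\leq b$ and the four corresponding refinement relations. Throughout, I will exploit two structural facts: join-density of $\mathrm{V}$ lets us extract ``small'' first coordinates, while the Heyting/Boolean laws convert statements of the form $x\not\leq y\to z$ into $x\wedge y\not\leq z$ and $x\not\leq \neg y$ into $x\wedge y\neq 0$.

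\textbf{Parts (i) and (ii).} For (i), reflexivity of $\comp$ on $P_1$ is immediate: every $(a,b)\in P_1\subseteq P_0$ satisfies $a\not\leq b$, hence $(a,b)\comp(a,b)$. For (ii), reflexivity on $P_2$ uses that the orthocomplementation is anti-inflationary on nonzero elements (if $a\leq\neg a$ then $a = a\wedge\neg a = 0$), and $(a,\neg a)\in P_2$ excludes $a=0$. Symmetry on $P_2$ follows from the fact that in an ortholattice $c\leq \neg a$ iff $a\leq \neg c$ (apply $\neg$ to both sides and use involutivity), hence $(a,\neg a)\comp(c,\neg c)$ iff $(c,\neg c)\comp(a,\neg a)$.

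\textbf{Part (iii).} Suppose $(a, a\to b)\comp(c, c\to e)$ in $P_3$, i.e., $c\not\leq a\to b$, which in a Heyting algebra amounts to $c\wedge a\not\leq b$. By join-density of $\mathrm{V}$, pick $w_1\in\mathrm{V}$ with $w_1\leq c\wedge a$ and $w_1\not\leq b$. Since $b\in\Lambda$ and $w_1\not\leq b$, the pair $w:=(w_1, w_1\to b)$ lies in $P_3$. I claim $w$ refines $(a, a\to b)$ and pre-refines $(c, c\to e)$, which witnesses compossibility. The pre-refinement checks reduce to the monotonicity fact that $w_1\leq a$ implies $w_1\wedge a'\leq a\wedge a'$, so $(a',a'\to b')\comp w$ yields $(a',a'\to b')\comp(a,a\to b)$ and likewise for $(c,c\to e)$. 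For post-refinement of $w$ with respect to $(a, a\to b)$, I will use that $w_1\leq a$ gives $a\to b\leq w_1\to b$ (since $w_1\wedge(a\to b)\leq a\wedge(a\to b)\leq b$), and combine this with the definitions to show that $w\comp z$ forces $(a,a\to b)\comp z$.

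\textbf{Part (iv).} The symmetry of $(P_2,\comp)$ was already shown in (ii). For compossibility, suppose $(a,\neg a)\comp(c,\neg c)$, so $c\not\leq \neg a$; in a Boolean algebra this is equivalent to $c\wedge a\neq 0$. By join-density, choose nonzero $w_1\in\mathrm{V}$ with $w_1\leq c\wedge a$, and let $w := (w_1,\neg w_1)\in P_2$. Because $\comp$ is symmetric on $P_2$, pre- and post-refinement coincide, so it suffices to check that $w_1\leq a$ and $w_1\leq c$ each force a common refinement. Concretely, for any $(a',\neg a')\in P_2$, $(a',\neg a')\comp w$ says $w_1\wedge a'\neq 0$, which by $w_1\leq a$ gives $a\wedge a'\neq 0$, i.e., $(a',\neg a')\comp(a,\neg a)$; the same argument with $c$ in place of $a$ finishes the claim.

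\textbf{Main obstacle.} The only real work is in (iii), because the second coordinates in $P_3$ are not themselves in $\Lambda$ but rather relative pseudocomplements of the form $a\to b$. Consequently the characterization of $\sqsubseteq_{pr}$ and $\sqsubseteq_{po}$ on $P_3$ is slightly more involved than on $P_1$ or $P_2$. However, as outlined above, I only need \emph{sufficient} conditions on the first coordinates, and these follow from monotonicity of meet and of $a\mapsto a\to b$ together with the standard Heyting identity $a\wedge(a\to b)\leq b$. No delicate use of meet-density of $\Lambda$ is required in part (iii) beyond ensuring $b\in\Lambda$, which is given by $(a,a\to b)\in P_3$.
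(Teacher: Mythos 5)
Your proof is correct and follows essentially the same route as the paper's: reflexivity from $a\not\leq b$, symmetry of $P_2$ via involutivity, and compossibility witnesses obtained by pulling a join-dense element below $c\wedge a$ (resp.\ a nonzero element below $a\wedge b$). The only differences are cosmetic: you rightly observe that in part (iii) the second coordinate $b$ is already in $\Lambda$, so the paper's extra appeal to meet-density of $\Lambda$ is unnecessary, and your post-refinement step via $a\to b\leq w_1\to b$ is equivalent to the paper's computation with $w\wedge x_i\not\leq b$.
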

\begin{proof}  For each part,  $(a,b)\in P_i$ implies $a\not\leq b$, so $(a,b)\comp (a,b)$. Hence $\comp$ is reflexive. For part (\ref{GoodFr2}), if $(a,\neg a)\comp (b,\neg b)$, so $b\not\leq \neg a$, then  $a\not\leq \neg b$, so $(b,\neg b)\comp (a,\neg a)$. Hence $\comp$ is symmetric.

For (\ref{GoodFr3}), toward showing that $(P_3,\comp)$ is a compossible compatibility frame, suppose $(a,a\to b)\comp (c,c\to d)$. We must show there is an $(x,x\to y)\in P_3$ that refines $(a,a\to b)$ and pre-refines $(c,c\to d)$. Since $(a,a\to b)\comp (c,c\to d)$, we have $c\not\leq a\to b$, so $c\wedge a\not\leq b$. Then where $c\wedge a=\bigvee \{x_i\in \mathrm{V}\mid i\in I\}$ and $b=\bigwedge\{b_k\in \Lambda\mid k\in K\}$, there are $i\in I$ and $k\in K$ such that $x_i\not\leq b_k$ and hence $x_i\not\leq x_i\to b_k$. We claim that $(x_i,x_i\to b_k)$ refines $(a,a\to b)$ and pre-refines $(c,c\to d)$. To see that $(x_i,x_i\to b_k)$ pre-refines $(a,a\to b)$ and $(c,c\to d)$, suppose $(w,w\to v)\comp (x_i,x_i\to b_k)$, so $x_i\not\leq w\to v$. Then since $x_i\leq a$, we have $a\not\leq w\to v$, so $(w,w\to v)\comp (a,a\to b)$; and since $x_i\leq c$, we have $c\not\leq w\to v$, so  $(w,w\to v)\comp (c,c\to d)$. To see that $(x_i,x_i\to b_k)$ post-refines $(a,a\to b)$, suppose $(x_i,x_i\to b_k)\comp (w,w\to v)$, so $w\not\leq x_i\to b_k$. It follows that $w\wedge x_i\not\leq b_k$ and hence $w\wedge x_i\not\leq b$, which with $x_i\leq a$ implies $w\wedge a\not\leq b$, so $w\not\leq a\to b$ and hence $(a,a\to b)\comp (w,w\to v)$.

For (\ref{GoodFr4}), symmetry follows from part (\ref{GoodFr2}). As for compossibility, if $(a,\neg a)\comp (b,\neg b)$, so $b\not\leq \neg a$, then $a\wedge b\neq 0$. Hence there is some nonzero $c\in\mathrm{V}$ with $c\leq a\wedge b$, which implies that $(c,\neg c)\in P_2$ refines both $(a,\neg a)$ and $(b,\neg b)$.\end{proof}

Combining Propositions \ref{CompRep}-\ref{GoodFr}, we have the following.

\begin{theorem}\label{CombinedThm} If $L$ is a lattice \textnormal{(}resp.~ortholattice, Heyting algebra, Boolean algebra\textnormal{)}, then $L$ embeds into the lattice of $c_\comp$-fixpoints of a compatibility frame \textnormal{(}resp.~symmetric, compossible, symmetric compossible compatibility frame\textnormal{)}, and if $L$ is complete, the embedding is an isomorphism.
\end{theorem}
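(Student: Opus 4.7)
The plan is to assemble the theorem directly from Propositions \ref{CompRep}, \ref{GoodEx}, and \ref{GoodFr} by making the right choice of pair set in each case, leaving all substantive work to those already-proved results. In every case, I first fix a join-dense set $\mathrm{V}$ of nonzero elements of $L$ and a meet-dense set $\Lambda$ of non-unit elements (for instance, $\mathrm{V} = L \setminus \{0\}$ and $\Lambda = L \setminus \{1\}$ always work when $L$ is nondegenerate; smaller dense sets may be substituted for economy).

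For an arbitrary lattice $L$, take $P_1 = P_0$ as in Proposition \ref{GoodEx}(\ref{GoodEx1}). That proposition tells us $P_1$ is separating, Proposition \ref{GoodFr}(\ref{GoodFr1}) tells us $(P_1, \comp)$ is a compatibility frame, and Proposition \ref{CompRep}(\ref{CompRep1}) then gives that the map $\varphi(a) = \{(x,y) \in P_1 \mid x \leq a\}$ is a lattice embedding of $L$ into $\lat(P_1, \comp)$. If $L$ is complete, Proposition \ref{CompRep}(\ref{CompRep2}) promotes this to an isomorphism.

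For the ortholattice case, I use $P_2 = \{(a, \neg a) \mid a \in \mathrm{V},\, a \neq 0\}$: Proposition \ref{GoodEx}(\ref{GoodEx2}) supplies separation and Proposition \ref{GoodFr}(\ref{GoodFr2}) supplies symmetry, then Proposition \ref{CompRep} again delivers the embedding, and the isomorphism when $L$ is complete. For Heyting $L$ I use $P_3$ from Proposition \ref{GoodEx}(\ref{GoodEx3}), with compossibility coming from Proposition \ref{GoodFr}(\ref{GoodFr3}). For Boolean $L$, the key observation is that every Boolean algebra is an ortholattice under its complementation, so $P_2$ serves once more; Proposition \ref{GoodFr}(\ref{GoodFr4}) then simultaneously yields symmetry and compossibility, while the embedding/isomorphism clauses follow from Proposition \ref{CompRep} as before.

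There is no real obstacle: the entire content is packed into the three cited propositions, and the proof amounts to checking that for each class we have selected the correct one of $P_1$, $P_2$, $P_3$ so that the three results align. The only subtle point worth flagging is that in the Boolean case one must resist the temptation to use $P_3$, since Proposition \ref{GoodFr}(\ref{GoodFr4}) is stated only for the ortholattice-style pair set $P_2$; this matters because we need both symmetry and compossibility from a single frame, not two distinct frames.
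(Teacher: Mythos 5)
Your proposal is correct and matches the paper's own proof, which is stated simply as ``Combining Propositions \ref{CompRep}--\ref{GoodFr}'': the intended argument is exactly the case-by-case selection of $P_1$, $P_2$, $P_3$, and $P_2$ again for the lattice, ortholattice, Heyting, and Boolean cases, with separation from Proposition \ref{GoodEx}, the frame properties from Proposition \ref{GoodFr}, and the embedding/isomorphism from Proposition \ref{CompRep}. Your flagged point about using $P_2$ (not $P_3$) in the Boolean case is exactly how the paper sets it up.
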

\noindent Thus, compossible compatibility frames yield a semantics for intermediate logics as general as complete Heyting algebras in the ``semantic hierarchy'' of \cite{BH2019}.

Finally, we prove that we can also represent any complete lattice expanded with a protocomplementation using a compatibility frame.

\begin{theorem}\label{NegThm} For any bounded lattice $L$ equipped with a protocomplementation $\neg$, the expansion $(L,\neg)$ embeds into the lattice of $c_\comp$-fixpoints of a compatibility frame equipped with $\neg_{\comp}$, and if $L$ is complete, the embedding is an isomorphism.
\end{theorem}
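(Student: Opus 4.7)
The plan is to specialize the pair-based construction from Propositions \ref{CompRep} and \ref{GoodEx} to a separating set of pairs that encodes the protocomplementation $\neg$. Concretely, I would let
\[
P = \{(a, b) \in L \times L : a \neq 0,\; b \neq 1,\; a \not\leq b,\; \neg a \leq b\},
\]
with $\comp$ on $P$ defined by $(x, y) \comp (p, q)$ iff $p \not\leq y$. The relation is reflexive because $x \not\leq y$ for any $(x, y) \in P$, so $(P, \comp)$ is a compatibility frame.

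To see $P$ is separating, I would apply Proposition \ref{GoodEx}(\ref{GoodEx1}) with $\mathrm{V} = L \setminus \{0\}$ (join-dense) and $\Lambda = L \setminus \{1\}$ (meet-dense), verifying the edge-cover condition: for each $a \in \mathrm{V}$ the pair $(a, \neg a)$ is in $P$ because $\neg a \neq 1$ (else $a = a \wedge \neg a = 0$) and $a \not\leq \neg a$ (by the same reasoning), while for each $b \in \Lambda$ the pair $(1, b)$ is in $P$, using $\neg 1 = 0 \leq b$. Propositions \ref{CompRep} and \ref{GoodEx} then yield the lattice embedding $\varphi(a) = \{(x, y) \in P : x \leq a\}$, which is an isomorphism when $L$ is complete.

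The main step is to check $\varphi(\neg a) = \neg_\comp \varphi(a)$. Unfolding,
\[
\neg_\comp \varphi(a) = \{(p, q) \in P : \forall (x, y) \in P,\; x \leq a \Rightarrow p \leq y\}.
\]
For $a \neq 0$, the membership $(a, \neg a) \in P$ forces $p \leq \neg a$ for any such $(p, q)$; conversely, if $p \leq \neg a$ and $(x, y) \in P$ with $x \leq a$, then antitone $\neg$ gives $\neg a \leq \neg x$, and the defining constraint $\neg x \leq y$ yields $p \leq \neg a \leq \neg x \leq y$. Thus $\neg_\comp \varphi(a) = \varphi(\neg a)$. The boundary cases are immediate: $\neg_\comp \emptyset = P = \varphi(1) = \varphi(\neg 0)$, and $\neg_\comp P = \emptyset$ since $(1, 0) \in P$ forces $p \leq 0$, so $\neg_\comp P = \emptyset = \varphi(\neg 1)$.

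The main obstacle is that the frame-level operation $\neg_\comp$ always picks out a ``largest available'' semicomplement in $\lat(P, \comp)$, which generally differs from a prescribed $\neg$ on $L$ (as is visible already for simple cases like the four-element diamond with $\neg b = 0$, where a naive pair or one-element representation instead produces an involutive orthocomplementation). The constraint $\neg a \leq b$ built into $P$ is precisely what pins $\neg_\comp \varphi(a)$ down to $\varphi(\neg a)$: it ensures that the infimum of the second coordinates of all pairs in $\varphi(a)$ is exactly $\neg a$, so that the universal condition defining $\neg_\comp \varphi(a)$ collapses to $p \leq \neg a$.
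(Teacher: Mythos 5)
Your proposal is correct and follows essentially the same route as the paper: the same separating set $P=\{(a,b)\mid a\not\leq b,\ \neg a\leq b\}$ (your extra conditions $a\neq 0$, $b\neq 1$ are automatic from $a\not\leq b$), the same embedding $\varphi$ via Proposition \ref{CompRep}, and the same computation showing $\varphi(\neg a)=\neg_\comp\varphi(a)$ using the witness $(a,\neg a)$ in one direction and antitonicity plus the constraint $\neg x\leq y$ in the other. The only cosmetic difference is that you verify separation by invoking the edge-cover criterion of Proposition \ref{GoodEx}(\ref{GoodEx1}) with the pairs $(a,\neg a)$ and $(1,b)$, whereas the paper checks the two conditions of Definition \ref{Good} directly with exactly those same witnesses.
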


\begin{proof} First, we claim $P=\{(a,b)\mid a,b\in L,a\not\leq b,\neg a\leq b\}$ is separating.  For part (\ref{Good2}) of Definition \ref{Good}, take $(c,d)=(a,\neg a)$. For (\ref{Good3}), suppose $(c,d)\in P$ and $c\not\leq b$. Let $(c',d')=(1,b)$. Since $b\neq 1$ and $\neg 1=0\leq b$, $(1,b)\in P$, and since $c\not\leq b$, $(c',d')\comp (c,d)$. Now consider any $(c'',d'')\in P$ with $(c',d')\comp (c'',d'')$. Then $c''\not\leq d'=b$, so (\ref{Good3}) holds. Thus, by Proposition \ref{CompRep}, $L$ embeds into $\mathfrak{L}(P,\comp)$ via the map $\varphi$, which is an isomorphism if $L$ is complete. Also observe that $\comp$ is reflexive on $P$. It only remains to show $\varphi(\neg a)=\neg_\comp\varphi(a)$. Suppose $(x,y)\in \varphi(\neg a)$, so $x\leq\neg a$, and $(x',y')\comp (x,y)$. If $x'\leq a$, then $\neg a\leq\neg x'$, which with $x\leq\neg a$ implies $x\leq \neg x'$, which with $\neg x'\leq y'$ implies $x\leq y'$, contradicting $(x',y')\comp (x,y)$. Thus, $x'\not\leq a$, so $(x',y')\not\in \varphi(a)$. Hence $(x,y)\in \neg_\comp \varphi(a)$. Conversely, let $(x,y)\in P\setminus\varphi(\neg a)$, so $x\not\leq \neg a$. Since $\neg 0=1$, it follows that $a\neq 0$, so $(a,\neg a)\in P$, and $(a,\neg a)\comp (x,y)$, so $(x,y)\not\in \neg_\comp\varphi(a)$.\end{proof}

Note that any bounded lattice can be equipped with the protocomplementation such that $\neg 0=1$ and $\neg a=0$ for $a\neq 0$ (in which case the set $P$ in the proof of Theorem \ref{NegThm} coincides with $P_0$ from Proposition \ref{GoodEx} where $\mathrm{V}$ and $\Lambda$ are the non-minimum and non-maximum elements of $L$, respectively), so Theorem \ref{NegThm} generalizes the part of Theorem \ref{CombinedThm} concerning bounded lattices.

\subsection{Representation of arbitrary lattices}\label{SubRepSec}

There is another way of representing any lattice $L$ as a sublattice of the lattice of $c_\comp$-fixpoints of a compatibility frame, which is now the canonical extension of $L$ (see \cite{Gehrke2001,Craig2014}) rather than its MacNeille completion.\footnote{We do not have space to discuss $(L,\neg)$ under this approach, so we save this for the future.} The sublattice can then be characterized in a simple way topologically.  This approach uses disjoint filter-ideal pairs and appears already in \cite{Craig2013}, building on \cite{Urquhart1978,Allwein1993}, though we use a different topology in order to generalize the choice-free Stone duality of \cite{BH2020}. Given a  lattice $L$, define $\mathsf{FI}(L)=(X,\comp)$ as follows: $X$ is the set of all pairs $(F,I)$ such that $F$ is a  filter in $L$, $I$ is an ideal in $L$, and $F\cap I=\varnothing$; and $(F,I)\comp (F',I')$ iff $I\cap F'=\varnothing$. Given $a\in L$, let $\widehat{a}=\{(F,I)\in X\mid a\in F\}$. Let $\mathsf{S}(L)$ be $\mathsf{FI}(L)$ endowed with the topology generated by $\{\widehat{a}\mid a\in L\}$.

In Appendix \ref{SubAppendix}, we prove the following (without choice). 

\begin{theorem}\label{EmbedThm} For any lattice \textnormal{(}resp.~bounded lattice\textnormal{)} $L$, the map $a\mapsto\widehat{a}$ is \textnormal{(i)} a lattice \textnormal{(}resp.~bounded lattice\textnormal{)} embedding  of $L$ into $\lat(\mathsf{FI}(L))$ and \textnormal{(ii)} an isomorphism from $L$ to the sublattice of $\lat(\mathsf{FI}(L))$ consisting of $c_\comp$-fixpoints that are compact open in the space $\mathsf{S}(L)$.
\end{theorem}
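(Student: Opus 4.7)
My plan is to verify four properties of $a\mapsto\widehat{a}$ in order: (i) each $\widehat{a}$ is a $c_\comp$-fixpoint of $\mathsf{FI}(L)$; (ii) $\widehat{\cdot}$ is an injective (bounded) lattice homomorphism; (iii) each $\widehat{a}$ is compact open in $\mathsf{S}(L)$; and (iv) every compact open $c_\comp$-fixpoint has the form $\widehat{a}$. First I would note that $\mathsf{FI}(L)$ is a compatibility frame, since for $(F,I)\in X$ the defining condition $F\cap I=\varnothing$ is exactly $(F,I)\comp (F,I)$.

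For (i), given $(F,I)$ with $a\notin F$, I would use the witness $(F,\downarrow a)$: it lies in $X$ because upward closure of $F$ forces $F\cap\downarrow a=\varnothing$; it is $\comp$-compatible with $(F,I)$; and any $(F'',I'')$ with $\downarrow a\cap F''=\varnothing$ must have $a\notin F''$, establishing the $c_\comp$-condition. For (ii), meet preservation $\widehat{a\wedge b}=\widehat{a}\cap\widehat{b}$ is immediate from the filter axioms. For joins, $\widehat{a}\cup\widehat{b}\subseteq\widehat{a\vee b}$ by upward closure yields one inclusion via (i); for the converse, given $(F,I)\in\widehat{a\vee b}$ and $(F',I')\comp (F,I)$, I would deduce $a\vee b\notin I'$, so by closure of the ideal $I'$ under joins at least one of $a,b$ (say $a$) is missing from $I'$, and then any pair of the form $(\uparrow a,I_0)$ with $I_0$ a non-empty ideal disjoint from $\uparrow a$ supplies the required $(F'',I'')\compflip (F',I')$ lying in $\widehat{a}$. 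Injectivity follows by separating $a\not\leq b$ via $(\uparrow a,\downarrow b)$; in the bounded case $\widehat{1}=X$ and $\widehat{0}=\varnothing$ under the standard convention that filters and ideals are non-empty.

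For (iii), each $\widehat{a}$ is open by construction of the topology, and since $\widehat{a}\cap\widehat{b}=\widehat{a\wedge b}$ the subbasic opens already form a basis. To show compactness, I would suppose $\widehat{a}\subseteq\bigcup_{i}\widehat{b_i}$ and argue that some $b_i$ must satisfy $a\leq b_i$, yielding the singleton subcover $\{\widehat{b_i}\}$: otherwise no $b_i$ lies in $\uparrow a$, so $(\uparrow a,I_0)$ for an appropriate ideal $I_0$ disjoint from $\uparrow a$ (existing whenever $\uparrow a\neq L$) would belong to $\widehat{a}$ but to no $\widehat{b_i}$, contradicting the cover; the edge case $\uparrow a=L$ only arises when $a=0$, and then $\widehat{0}=\varnothing$ is trivially compact. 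For (iv), let $A$ be a compact open $c_\comp$-fixpoint. Writing $A=\bigcup_i\widehat{b_i}$ and applying compactness to $A$ itself yields $A=\widehat{b_1}\cup\cdots\cup\widehat{b_n}$; then $A=c_\comp(A)=c_\comp(\widehat{b_1}\cup\cdots\cup\widehat{b_n})$ is the join of the $\widehat{b_i}$ in $\lat(\mathsf{FI}(L))$ by Proposition \ref{ClosureLattice}, which by (ii) equals $\widehat{b_1\vee\cdots\vee b_n}$.

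The main obstacle is the join-preservation step of (ii) together with the compactness argument in (iii): both require constructing a witness filter--ideal pair from partial algebraic data about $a,b,I'$, and both must work without invoking the Boolean Prime Ideal Theorem since the theorem is stated as choice-free. The saving observation is that one never needs anything beyond the principal filters $\uparrow a$ and principal ideals $\downarrow a$, whose disjointness properties are controlled purely by the order on $L$.
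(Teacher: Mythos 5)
Your proposal is correct and follows essentially the same route as the paper's proof: the same witnesses $(F,\mathord{\downarrow}a)$ for the fixpoint property, $(\mathord{\uparrow}a,\mathord{\downarrow}b)$ for injectivity and compactness, and $(\mathord{\uparrow}a, I_0)$ with $I_0=\mathord{\downarrow}c$ for $c\in I'$ in the join-preservation step, followed by the same compactness-plus-join argument for surjectivity onto the compact open fixpoints. The only difference is cosmetic: you leave the choice of $I_0$ slightly unspecified where the paper takes $\mathord{\downarrow}c$ explicitly, but the disjointness $I'\cap\mathord{\uparrow}a=\varnothing$ you need follows just as you indicate from $a\notin I'$ and downward closure.
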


In Appendix \ref{SubAppendix} we also prove the following characterization of spaces equipped with a relation $\comp$ that are isomorphic to $\mathsf{S}(L)$ for some $L$ in a manner analogous to the characterization of UV-spaces in \cite{BH2020}. Let $X$ be a topological space and $\comp$ a reflexive relation on $X$. Let $\mathsf{COFix}(X,\comp)$ be the set of all compact open sets of $X$ that are also $c_\comp$-fixpoints. Given $U,V\in \mathsf{COFix}(X,\comp)$, $U\vee V =c_\comp(U\cup V)$. Given $x\in X$, let 
\begin{align*}
\mathsf{F}(x)&=\{U\in \mathsf{COFix}(X,\comp)\mid x\in U\} \\
\mathsf{I}(x)&=\{U\in \mathsf{COFix}(X,\comp)\mid \forall y\compflip x\;\, y\not\in U\}.\end{align*} 

\begin{proposition}\label{COFixProp} For any space $X$ and reflexive binary relation $\comp$ on $X$, there is a lattice $L$ such that $(X,\comp)$ and $\mathsf{S}(L)$ are homeomorphic as spaces and isomorphic as relational frames iff the following conditions hold for all $x,y\in X$: \textnormal{(i)} $x=y$ iff $(\mathsf{F}(x),\mathsf{I}(x))=(\mathsf{F}(y),\mathsf{I}(y))$\textnormal{;} \textnormal{(ii)} $\mathsf{COFix}(X,\comp)$ is closed under $\cap$ and $\vee$ and forms a basis for $X$\textnormal{;} \textnormal{(iii)} each disjoint filter-ideal pair from  $\mathsf{COFix}(X,\comp)$ is $(\mathsf{F}(x), \mathsf{I}(x))$ for some $x\in X$\textnormal{;} \textnormal{(iv)} $x\comp y$ iff $\mathsf{I}(x)\cap \mathsf{F}(y)= \varnothing$.
\end{proposition}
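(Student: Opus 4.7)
The plan is to prove the two directions separately. For the forward direction (necessity), given a lattice $L$, let $(X,\comp) = \mathsf{S}(L)$. By Theorem \ref{EmbedThm}, the map $a\mapsto\widehat{a}$ identifies $L$ with $\mathsf{COFix}(\mathsf{S}(L))$, so it suffices to translate (i)-(iv) through this identification. One first checks that a point $x=(F,I)$ of $\mathsf{FI}(L)$ satisfies $\mathsf{F}(x)=F$ (immediate) and $\mathsf{I}(x)=I$: if $a\in I$ and $(F,I)\comp(F',I')$ (i.e.\ $I\cap F'=\varnothing$), then $a\notin F'$, so $\widehat{a}\in\mathsf{I}(x)$; conversely, if $a\notin I$, then $(\uparrow a,I)$ is a point of $\mathsf{FI}(L)$ witnessing $\widehat{a}\notin\mathsf{I}(x)$. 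Conditions (i)-(iv) then follow directly from the definitions of $\mathsf{FI}(L)$ and $\mathsf{S}(L)$ and from Theorem \ref{EmbedThm}.

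For the reverse direction (sufficiency), I would set $L:=\mathsf{COFix}(X,\comp)$, which is a lattice by (ii) with meet $\cap$ and join $U\vee V=c_\comp(U\cup V)$, and define $\Phi:X\to\mathsf{FI}(L)$ by $\Phi(x)=(\mathsf{F}(x),\mathsf{I}(x))$. I would then verify in turn: (a) $\mathsf{F}(x)$ is a filter of $L$ (immediate from (ii)) and $\mathsf{I}(x)$ is an ideal; (b) $\mathsf{F}(x)\cap\mathsf{I}(x)=\varnothing$ (since $x\compflip x$ by reflexivity, $U\in\mathsf{I}(x)$ forces $x\notin U$); (c) $\Phi$ is injective by (i) and surjective by (iii); (d) $\Phi$ respects $\comp$ by (iv); (e) $\Phi$ is a homeomorphism. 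For (e) the key computation is $\Phi^{-1}(\widehat{U})=\{x : U\in\mathsf{F}(x)\}=U$, so that pulling back the basis $\{\widehat{U}: U\in L\}$ of $\mathsf{S}(L)$ yields the basis $\mathsf{COFix}(X,\comp)$ of $X$ given by (ii); combined with surjectivity, $\Phi$ also sends basis sets to basis sets, hence is a homeomorphism.

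The main obstacle is step (a), specifically showing $\mathsf{I}(x)$ is closed under $\vee$: a direct attempt fails because $c_\comp(U\cup V)$ can contain points outside $U\cup V$ and $\comp$ need not be transitive, so there is no straightforward way to transfer ``$y\notin U$ and $y\notin V$'' to ``$y\notin U\vee V$''. The idea is to instead pass to the ideal $I_0$ of $L$ generated by $\mathsf{I}(x)$ and show $\mathsf{F}(x)\cap I_0=\varnothing$: if $U\in\mathsf{F}(x)\cap I_0$, then $x\in U\subseteq W_1\vee\cdots\vee W_n=c_\comp(W_1\cup\cdots\cup W_n)$ for some $W_i\in\mathsf{I}(x)$, and applying the $c_\comp$-fixpoint condition at $x$ with $x'=x$ (by reflexivity) yields a $y\compflip x$ with $y\in W_j$ for some $j$, contradicting $W_j\in\mathsf{I}(x)$. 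Now $(\mathsf{F}(x),I_0)$ is a disjoint filter-ideal pair in $L$, so by (iii) it equals $(\mathsf{F}(y),\mathsf{I}(y))$ for some $y$, and (i) forces $y=x$; hence $\mathsf{I}(x)=I_0$ is an ideal, completing step (a) and with it the theorem.
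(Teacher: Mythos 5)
Your overall architecture is sound and matches the paper's: the forward direction reduces to checking that $\mathsf{S}(L)$ itself satisfies (i)--(iv), via the identification of $(\mathsf{F}(x),\mathsf{I}(x))$ for $x=(F,I)$ with $(\varphi[F],\varphi[I])$, and the reverse direction takes $L=\mathsf{COFix}(X,\comp)$ and the map $x\mapsto(\mathsf{F}(x),\mathsf{I}(x))$ (the paper defers the homeomorphism details to an analogous argument elsewhere, so your filling them in is welcome). However, the step you single out as the ``main obstacle'' contains a genuine gap. At the end you have the disjoint pair $(\mathsf{F}(x),I_0)$, where $I_0$ is the ideal generated by $\mathsf{I}(x)$; condition (iii) gives a $y$ with $(\mathsf{F}(y),\mathsf{I}(y))=(\mathsf{F}(x),I_0)$, and you invoke (i) to conclude $y=x$. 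But (i) yields $y=x$ only if $(\mathsf{F}(x),\mathsf{I}(x))=(\mathsf{F}(y),\mathsf{I}(y))$, and at that point you know only $\mathsf{F}(x)=\mathsf{F}(y)$ and $\mathsf{I}(x)\subseteq I_0=\mathsf{I}(y)$. If the inclusion were proper, (i) would give $x\neq y$ and no contradiction would follow; the argument assumes exactly what it is trying to prove.

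Fortunately, the direct approach you abandoned does go through, and no transitivity of $\comp$ is needed. Observe that $U\in\mathsf{I}(x)$ says precisely $U\cap\{z\mid x\comp z\}=\varnothing$. Suppose $U,V\in\mathsf{I}(x)$ and, for contradiction, $x\comp y$ for some $y\in U\vee V=c_\comp(U\cup V)$. The definition of $c_\comp$ quantifies universally over \emph{all} states compatible with $y$, and $x$ is one of them; instantiating at $x$ produces an $x''\compflip x$, i.e., with $x\comp x''$, such that $x''\in U\cup V$, contradicting $U\in\mathsf{I}(x)$ or $V\in\mathsf{I}(x)$. (The same one-line argument shows that $A\cap\{z\mid x\comp z\}=\varnothing$ implies $c_\comp(A)\cap\{z\mid x\comp z\}=\varnothing$ for any $A$.) With this repair your step (a) closes, and your verifications of (b)--(e) are correct.
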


\section{Compatibility and accessibility frames}\label{ModalCase}

In this section, we extend the three representations from \S\S~\ref{SpecialRepSec}, \ref{ArbCompLat}, and \ref{SubRepSec} to lattices equipped with a modal operation $\Box$ (we defer other modalities not definable from $\Box$ to future work). First, we add an accessibility relation $R$ to compatibility frames and require that the standard modal operation $\Box_R$ sends $c_\comp$-fixpoints to $c_\comp$-fixpoints.  These frames are similar to the \textit{graph-based frames} of \cite[Definition 2]{Conradie2019}, which have been applied in \cite{Conradie2019b,Conradie2020,Conradie2021}. Conradie et al.~\cite[Theorem~1]{Conradie2019} use the filter-ideal frame $\mathsf{FI}(L)$ equipped with accessibility relations to prove completeness of the minimal non-distributive modal logic with respect to graph-based frames (compare our Theorem \ref{ModalEmbedding}); in addition, they treat Sahlqvist correspondence theory for graph-based frames. 

There are many related approaches to representing lattices with modalities in the literature (see, e.g., \cite{Orlowska2005,Gehrke2006,Conradie2016,Hartonas2018,Hartonas2019,Goldblatt2019,Dmitrieva2021} and references therein). The approach below using just two binary relations on a single set is of special interest to us as a non-classical generalization of classical ``possibility semantics''  for modal logic \cite{Humberstone1981,HollidayForthA,Holliday2021b,Yamamoto2017,Zhao2016}. Our motivation for such a non-classical generalization comes from a recent application of the approach below to modal ortholattices for natural language semantics \cite{HM2021}. 

All proofs in this section are deferred to Appendix \ref{NecAppendix}.

\begin{definition} A \textit{necessity lattice} is a pair $(L,\Box)$ where $L$ is a lattice and $\Box$ is a unary operation on $L$ that is multiplicative, i.e., $\Box (a\wedge b)=\Box a\wedge\Box b$ for all $a,b\in L$, and $\Box 1=1$ if $L$ contains a maximum element $1$.  We say $\Box$ is \textit{completely multiplicative} if for any $A\subseteq L$, if $\bigwedge A$ exists in $L$, then $\Box \bigwedge A= \bigwedge \{\Box a\mid a\in A\}$.
\end{definition}

\begin{definition}\label{CAframes} A \textit{compatibility and accessibility} (\textit{CA}) \textit{frame} is a triple ${(X,\comp , R)}$ such that $(X,\comp)$ is a compatibility frame  and $R$ is a binary relation on $X$ such that for any $A\subseteq X$, if $A$ is a $c_\comp$-fixpoint, then so is
\[\Box_R A=\{x\in X\mid R(x)\subseteq A\},\] where $R(x)=\{y\in X\mid xRy\}$.
\end{definition}

\noindent Stronger conditions on the interplay of $\comp$ and $R$ could be imposed (see \cite[Def.~2]{Conradie2019} and Proposition \ref{FO} below) but Definition \ref{CAframes} suffices for the following.

\begin{proposition} For any CA frame $(X,\comp, R)$, the pair $(\lat(X,\comp),\Box_R)$ is a complete necessity lattice with $\Box_R$ completely multiplicative.
\end{proposition}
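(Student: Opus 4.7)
The plan is to assemble the statement from pieces that are already essentially in hand. Completeness of $\lat(X,\comp)$ is immediate from Corollary \ref{FrameToLate}, and the single nontrivial hypothesis in Definition \ref{CAframes} is precisely what is needed to see that $\Box_R$ restricts to an operation on the set of $c_\comp$-fixpoints. So the real content to verify is (a) that $\Box_R$ preserves the top of $\lat(X,\comp)$ and (b) that $\Box_R$ commutes with arbitrary meets in $\lat(X,\comp)$.

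First I would observe that the top of $\lat(X,\comp)$ is $X$ itself (since $c_\comp$ is inflationary, $c_\comp(X)=X$), and $\Box_R X = \{x\in X \mid R(x)\subseteq X\} = X$, giving $\Box_R 1 = 1$. Next, by Proposition \ref{ClosureLattice}, the meet of a family $\{A_i\}_{i\in I}$ of $c_\comp$-fixpoints is simply $\bigcap_{i\in I} A_i$. Thus to establish complete multiplicativity (which subsumes ordinary multiplicativity), it suffices to show $\Box_R\bigl(\bigcap_{i\in I} A_i\bigr) = \bigcap_{i\in I} \Box_R A_i$ as subsets of $X$. This is a purely set-theoretic identity about the powerset box operator induced by a binary relation: $x \in \Box_R\bigl(\bigcap_{i\in I} A_i\bigr)$ iff $R(x) \subseteq \bigcap_{i\in I} A_i$ iff $R(x)\subseteq A_i$ for every $i\in I$ iff $x\in \bigcap_{i\in I}\Box_R A_i$. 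One must note that on the right-hand side each $\Box_R A_i$ is a $c_\comp$-fixpoint by the defining hypothesis of a CA frame, so the intersection is computed in $\lat(X,\comp)$ and coincides with the set-theoretic intersection.

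There is no real obstacle: the CA frame condition (Definition \ref{CAframes}) has been chosen so as to reduce multiplicativity on $\lat(X,\comp)$ to the trivial fact that preimage-style operators on a powerset distribute over arbitrary intersections. In particular, no symmetry, reflexivity, or other interaction axiom between $\comp$ and $R$ is required for this proposition, which is exactly why Definition \ref{CAframes} is stated in its minimal form; stronger conditions will only come into play later when one wants to capture further modal principles.
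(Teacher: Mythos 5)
Your proof is correct and follows essentially the same route as the paper: completeness from Corollary \ref{FrameToLate}, meets computed as intersections via Proposition \ref{ClosureLattice}, and the observation that $\Box_R$ distributes over arbitrary set-theoretic intersections, with the CA frame condition guaranteeing that $\Box_R$ restricts to an operation on fixpoints. The paper compresses all of this into the remark that complete multiplicativity is then ``obvious''; you have simply written out the details.
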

\begin{proof} That $\lat(X,\comp)$ is a complete lattice is Corollary \ref{FrameToLate}. Recall that meet is intersection. Then the complete multiplicativity of $\Box_R$ is obvious.
\end{proof}

\begin{example} Fig.~\ref{CAFig} shows a CA frame (left) where a dotted line from $w$ to $v$ means $wRv$. Observe that $\Box_R(\{x\})=\{x\}$, $\Box_R(\{y,z\})=\{z\}$, and $\Box_R(\{z\})=\{z\}$. Thus, $\Box_R$ sends $c_\comp$-fixpoints to $c_\comp$-fixpoints. The $\Box_R$ operation on the lattice of $c_\comp$-fixpoints (right) is represented by the double-shafted arrows.
\end{example}

\begin{figure}[h]
\begin{center}
\begin{minipage}{1.75in}
\begin{center}
\begin{tikzpicture}[->,>=stealth,shorten >=1pt,shorten <=1pt, auto,node
distance=2cm,thick,every loop/.style={<-,shorten <=1pt}]
\tikzstyle{every state}=[fill=gray!20,draw=none,text=black]

\tikzset{every loop/.style={min distance=10mm,looseness=10}}

\node[label=center:$x$,inner sep=0pt,minimum size=.175cm] at (0,0) (D) {}; 
\node[label=center:$y$,inner sep=0pt,minimum size=.175cm] at (1,1) (F) {}; 
\node[label=center:$z$,inner sep=0pt,minimum size=.175cm] at (2,0) (H) {}; 

\path[-{Triangle[open]},draw,thick] (D) to node {{}}  (F);
\path[-{Triangle[open]},draw,thick] (F) to node {{}}  (H);

\path[loop left,->,dotted,draw,thick] (D) to node {{}}  (D);
\path[loop right,->,dotted,draw,thick] (H) to node {{}}  (H);
\path[bend left=40,->,dotted,draw,thick] (F) to node {{}}  (H);
\path[bend right=40,->,dotted,draw,thick] (F) to node {{}}  (D);
\path[loop above,->,dotted,draw,thick] (F) to node {{}}  (D);

\path[-, draw=red, opacity=0.5, thick, rounded corners]  (0, .4) -- (.4, .4) -- (.4, -.4) -- (-.4, -.4) -- (-.4, .4) -- (.4, .4) -- (0, .4); 

\path[-, draw=darkgreen, opacity=0.5, thick, rounded corners] (1.1, 1.5) -- (1.5, 1.5) -- (2.5, .5) -- (2.5, -.5) -- (1.5, -.5) -- (.5, .5)  -- (.5, 1.5) -- (1.5, 1.5) -- (1.1, 1.5); 

\path[-, draw=blue, opacity=0.5, thick, rounded corners] (2, .4) -- (2.4, .4) -- (2.4, -.4) -- (1.6, -.4) -- (1.6, .4) -- (2.4, .4) -- (2, .4); 

\end{tikzpicture}
\end{center}
\end{minipage}
\begin{minipage}{1.75in}
\begin{center}
\begin{tikzpicture}[->,>=stealth',shorten >=1pt,shorten <=1pt, auto,node
distance=2cm,thick,every loop/.style={<-,shorten <=1pt}]
\tikzstyle{every state}=[fill=gray!20,draw=none,text=black]

\node[circle,draw=black!100, label=right:$$,inner sep=0pt,minimum size=.175cm] (n1) at (0,0) {{}};
\node[circle,fill=darkgreen!50,draw=black!100, label=left:$$,inner sep=0pt,minimum size=.175cm] (nx) at (1,-.5) {{}};
\node[circle,fill=blue!50,draw=black!100, label=right:$$,inner sep=0pt,minimum size=.175cm] (ny) at (1,-1.5) {{}};
\node[circle,fill=red!50,draw=black!100, label=right:$$,inner sep=0pt,minimum size=.175cm] (nz) at (-1,-1) {{}};
\node[circle,draw=black!100, label=right:$$,inner sep=0pt,minimum size=.175cm] (n0) at (0,-2) {{}};
\path (nx) edge[-] node {{}} (n1);
\path (nx) edge[-] node {{}} (ny);
\path (n1) edge[-] node {{}} (nz);
\path (ny) edge[-] node {{}} (n0);
\path (nz) edge[-] node {{}} (n0);

\path (ny) edge[->,dashed,gray] node {{}} (nz);

\path (nx) edge[->,dashed,gray] node {{}} (n0);

\path (nz) edge[->,dashed,gray] node {{}} (nx);

\path (nx) edge[->,>=stealth,bend left,double,gray] node {{}} (ny);
\path (ny) edge[loop right,>=stealth,double,gray] node {{}} (ny);
\path (nz) edge[loop left,>=stealth,double,gray] node {{}} (nz);
\path (n0) edge[loop left,>=stealth,double,gray] node {{}} (n0);
\path (n1) edge[loop left,>=stealth,double,gray] node {{}} (n1);

\end{tikzpicture}
\end{center}
\end{minipage}
\end{center}
\caption{CA frame and associated necessity lattice.}\label{CAFig}
\end{figure}
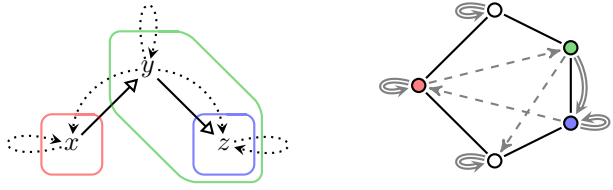

Under certain assumptions about $\comp$, the condition that $\Box_R$ sends $c_\comp$-fixpoints to $c_\comp$-fixpoints corresponds to a first-order condition on $R$ and $\comp$. For example, when $\comp$ is a preorder, in light of Proposition \ref{PWS}(\ref{PWS2}), the condition that $\Box_R$ sends  $c_\comp$-fixpoints to $c_\comp$-fixpoints corresponds to $\Box_R$ sending downsets to downsets and hence to the standard interaction condition for intuitonistic modal frames \cite{Bozic1984}: if $y\in R(x)$ and $x\comp x'$, then there is a $y'\in R(x')$ with $y\comp y'$. When $\comp$ is symmetric, we get the first-order condition in the following proposition. For useful notation, define $z\comp_R x \Leftrightarrow\exists y: z\comp y\in R(x)$.

\begin{proposition}\label{FO} If $(X,\comp)$ is a compatibility frame and $R$ a binary relation on $X$, then $(X,\comp,R)$ is a CA frame if the following condition holds: for all $x,z\in X$, if $z\comp_R x$, then $\exists x'\comp x$ $\forall x''\compflip x' $  \,$z\comp_R x''$. 

Moreover, if $(X,\comp)$ is a symmetric compatibility  frame, then $(X,\comp,R)$ is a CA frame \emph{if and only if} the stated condition holds.\end{proposition}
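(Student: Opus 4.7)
The plan is to unwind the defining property of a CA frame -- that $\Box_R A$ is a $c_\comp$-fixpoint for every $c_\comp$-fixpoint $A$ -- into a first-order statement phrased in terms of $\comp_R$, and then match it against the stated condition in both directions.

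For the sufficient direction I fix a $c_\comp$-fixpoint $A$; since $\Box_R A \subseteq c_\comp(\Box_R A)$ is automatic, I argue the reverse containment by contrapositive. Suppose $x \notin \Box_R A$, so some $y \in R(x)$ lies outside $A$; because $A = c_\comp(A)$ there is $y' \comp y$ with no $y'' \compflip y'$ in $A$. Setting $z := y'$ gives $z \comp_R x$ via the witness $y$, and the hypothesis yields $x' \comp x$ such that every $x'' \compflip x'$ admits some $y''' \in R(x'')$ with $y' \comp y'''$. Since $y''' \compflip y'$, the choice of $y'$ forces $y''' \notin A$, whence $R(x'') \not\subseteq A$ and $x'' \notin \Box_R A$. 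This exhibits the required $x' \comp x$ witnessing $x \notin c_\comp(\Box_R A)$.

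For the necessary direction the key move is to select a discriminating $c_\comp$-fixpoint whose $\Box_R$-image records the $\comp_R$-predecessors of $z$. I take $A := \neg_\comp\{z\}$, which is a $c_\comp$-fixpoint by Proposition \ref{NegProp}. Unfolding the definition (and using symmetry of $\comp$ to identify $\comp$ with $\compflip$ freely) gives $A = \{w \mid \neg(z \comp w)\}$, so $\Box_R A = \{x \mid \forall y \in R(x)\,\neg(z \comp y)\} = \{x \mid \neg(z \comp_R x)\}$. The CA-frame hypothesis forces $\Box_R A$ to be a $c_\comp$-fixpoint; hence whenever $z \comp_R x$ we have $x \notin c_\comp(\Box_R A)$, and unpacking this with the explicit form of $\Box_R A$ delivers exactly an $x' \comp x$ every $\compflip$-predecessor $x''$ of which satisfies $z \comp_R x''$.

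The main obstacle is the choice of test fixpoint in the necessary direction: one needs a $c_\comp$-fixpoint whose $\Box_R$-image cleanly encodes $\comp_R$-incompatibility with a fixed $z$, and $\neg_\comp\{z\}$ is the natural candidate because Proposition \ref{NegProp} guarantees it is a fixpoint without any additional structural hypothesis. Symmetry of $\comp$ is what makes the rewriting of $A$ into its convenient forward-only form transparent. The sufficient direction, in contrast, is essentially an exercise in carefully threading the nested $\exists/\forall$ layers in the definitions of $c_\comp$, $\Box_R$, and $\comp_R$.
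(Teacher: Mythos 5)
Your proof is correct. The sufficiency direction is essentially identical to the paper's argument (thread the quantifiers through $c_\comp$, $\Box_R$, and $\comp_R$), but your necessity direction takes a genuinely different and in fact cleaner route. The paper tests the CA condition against the fixpoint $\neg_\comp c_\comp(\{z\})$ and then needs the symmetry of $\comp$ to unwind the resulting double layer of quantifiers (specifically, to instantiate a universally quantified $w''\comp w'$ at $w''=w$ and extract $w\comp z$). You instead test against $\neg_\comp\{z\}$, which Proposition \ref{NegProp} already certifies as a fixpoint in any relational frame, and which computes directly to $\{w\mid \neg(z\comp w)\}$, so that $\Box_R\neg_\comp\{z\}$ is exactly the complement of $\{x\mid z\comp_R x\}$; the fixpoint property of this set is then literally the stated first-order condition. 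One remark: your parenthetical appeal to symmetry is not actually needed anywhere in this computation, since $w\in\neg_\comp\{z\}$ unfolds to ``no $v\comp w$ equals $z$,'' i.e., $\neg(z\comp w)$, with no flipping of the relation. Consequently your argument proves the biconditional for \emph{arbitrary} compatibility frames, strengthening the proposition as stated (the paper only claims the ``only if'' under symmetry). This is a nice simplification worth keeping.
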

\noindent In fact, the proof of Proposition \ref{ModalEmbedding} below shows that any necessity lattice can be represented using a CA frame satisfying the condition in Proposition \ref{FO}, so in that sense we can work with such CA frames without loss of generality.

Next we turn to extending the representation theorems of \S~\ref{LatticesToFrames}.

\begin{proposition}\label{SpecialProp2} Let $L$ be a complete lattice satisfying the hypotheses of Proposition \ref{SpecialRep}, so $L$ is isomorphic to $\lat(\mathrm{V},\comp)$ via  $b\mapsto\varphi(b)=\{x\in\mathrm{V}\mid x\leq b\}$. Given a completely multiplicative operation $\Box$ on $L$, define $R$ on $\mathrm{V}$ by $xRy$ iff  $y\leq \bigwedge\{a\in L\mid x\leq\Box a\}$. Then $(\mathrm{V},\comp,R)$ is a CA frame, and $\varphi$ is an isomorphism from $(L,\Box)$ to $(\lat(\mathrm{V},\comp),\Box_R)$.
\end{proposition}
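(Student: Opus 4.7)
The plan is to reduce both claims to the single identity $\varphi(\Box b) = \Box_R \varphi(b)$ for every $b \in L$. By Proposition \ref{SpecialRep}(\ref{SpecialRep2}), $\varphi$ is already a lattice isomorphism from $L$ onto $\lat(\mathrm{V},\comp)$, so the $c_\comp$-fixpoints are exactly the sets of the form $\varphi(b)$. Once the identity is established, it follows immediately that $\Box_R$ sends $c_\comp$-fixpoints to $c_\comp$-fixpoints, so $(\mathrm{V},\comp,R)$ is a CA frame, and that $\varphi$ is an isomorphism of necessity lattices.

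To prove the identity, let $m_x := \bigwedge\{a \in L \mid x \leq \Box a\}$, so that $xRy$ amounts to $y \leq m_x$. Complete multiplicativity of $\Box$ gives $\Box m_x = \bigwedge\{\Box a \mid x \leq \Box a\} \geq x$, hence $x \leq \Box m_x$. Moreover, $\Box$ is monotone, since $a \leq c$ implies $\Box a = \Box(a \wedge c) = \Box a \wedge \Box c \leq \Box c$. For the inclusion $\varphi(\Box b) \subseteq \Box_R \varphi(b)$, suppose $x \in \mathrm{V}$ with $x \leq \Box b$; then $b$ lies in the meet defining $m_x$, so $m_x \leq b$, and consequently any $y$ with $xRy$ satisfies $y \leq m_x \leq b$, i.e., $y \in \varphi(b)$. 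For the reverse inclusion, suppose $R(x) \subseteq \varphi(b)$, so every $y \in \mathrm{V}$ with $y \leq m_x$ satisfies $y \leq b$; by join-density of $\mathrm{V}$, $m_x = \bigvee\{y \in \mathrm{V} \mid y \leq m_x\} \leq b$, and then $x \leq \Box m_x \leq \Box b$ by monotonicity.

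The only genuinely substantive step is the inequality $x \leq \Box m_x$, which is exactly where complete multiplicativity is used; without it one would only have the inequality $\Box m_x \leq \bigwedge\{\Box a \mid x \leq \Box a\}$ in one direction, and the reverse inclusion above would collapse. Everything else is a straightforward unwinding of the definitions of $R$ and $\Box_R$ together with join-density and the monotonicity of $\Box$.
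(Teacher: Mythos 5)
Your proof is correct and follows essentially the same route as the paper's: both reduce everything to the identity $\varphi(\Box b)=\Box_R\varphi(b)$ (with the CA-frame claim then following from surjectivity of $\varphi$), and both use complete multiplicativity to get $x\leq\Box\bigl(\bigwedge\{a\mid x\leq\Box a\}\bigr)$ together with join-density of $\mathrm{V}$ for the reverse inclusion. The only difference is presentational: you argue the reverse inclusion directly while the paper argues the contrapositive, which are the same argument.
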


\begin{example} Fig.~\ref{FinalFig} shows a necessity ortholattice $(L,\Box)$ (right) along with the CA frame (left) that comes from the representation of $(L,\Box)$ via join-irreducible elements given by Theorem \ref{RepThm1} and Proposition \ref{SpecialProp2}. It is argued in \cite{HM2021} that this necessity ortholattice captures some important logical entailments involving the epistemic modals `must'  (formalized as $\Box$) and `might' (formalized as $\Diamond=\neg\Box\neg$) in natural language, including the phenomenon of ``epistemic contradiction'' whereby sentences of the form ``$p$, but it might be that $\neg p$'' are judged contradictory, even though ``it might be that $\neg p$'' does not entail $\neg p$.
\end{example}
\begin{figure}[h]

\begin{center}
\begin{minipage}{1.6in}
\begin{center}
\tikzset{every loop/.style={min distance=10mm,looseness=10}}
\begin{tikzpicture}[->,>=stealth,shorten >=1pt,shorten <=1pt, auto,node
distance=2.5cm,semithick]
\tikzstyle{every state}=[fill=gray!20,draw=none,text=black]

\tikzset{every loop/.style={min distance=12mm,looseness=10}}

\node[circle,draw=black!100, fill=black!100, label=below:$$,inner sep=0pt,minimum size=.175cm] (1) at (0,-1.5) {{}};
\node[circle,draw=black!100, fill=black!100, label=below:$$,inner sep=0pt,minimum size=.175cm] (2) at (0,0) {{}};
\node[circle,draw=black!100, fill=black!100, label=below:$$,inner sep=0pt,minimum size=.175cm] (3) at (1.5,0) {{}};
\node[circle,draw=black!100, fill=black!100, label=below:$$,inner sep=0pt,minimum size=.175cm] (4) at (3,0) {{}};
\node[circle,draw=black!100, fill=black!100, label=below:$$,inner sep=0pt,minimum size=.175cm] (5) at (3,-1.5) {{}};

\path (1) edge[{Triangle[open]}-{Triangle[open]}] node {{}} (2);
\path (2) edge[{Triangle[open]}-{Triangle[open]}] node {{}} (3);
\path (3) edge[{Triangle[open]}-{Triangle[open]}] node {{}} (4);
\path (4) edge[{Triangle[open]}-{Triangle[open]}] node {{}} (5);

\path (1) edge[loop left, dotted] node {{}} (1);
\path (2) edge[loop left, dotted] node {{}} (2);
\path (3) edge[loop above, dotted] node {{}} (3);
\path (4) edge[loop right=40, dotted] node {{}} (4);
\path (5) edge[loop right=40, dotted] node {{}} (5);

\path (2) edge[bend left=40, dotted] node {{}} (1);
\path (2) edge[bend right=40, dotted] node {{}} (3);

\path (4) edge[bend left=40, dotted] node {{}} (3);
\path (4) edge[bend right=40, dotted] node {{}} (5);

\path[-, draw=red, opacity=0.5, thick, rounded corners]  (0, -1.9) -- (.4, -1.9) -- (.4, -1.1) -- (-.4, -1.1) -- (-.4, -1.9) -- (.4, -1.9) -- (0, -1.9); 

\path[-, draw=teal, opacity=0.5, thick, rounded corners]  (-.1, -2) -- (.5, -2) -- (.5, .5) -- (-.5, .5) -- (-.5, -2) -- (.5, -2) -- (-.1, -2); 

\path[-, draw=orange, opacity=0.5, thick, rounded corners]  (3, -1.9) -- (3.4, -1.9) -- (3.4, -1.1) -- (2.6, -1.1) -- (2.6, -1.9) -- (3.4, -1.9) -- (3, -1.9); 

\path[-, draw=cyan, opacity=0.5, thick, rounded corners]  (2.9, -2) -- (3.5, -2) -- (3.5, .5) -- (2.5, .5) -- (2.5, -2) -- (3.5, -2) -- (2.9, -2); 

\path[-, draw=brown, opacity=0.5, thick, rounded corners]  (1.5, -.4) -- (1.9, -.4) -- (1.9, .4) -- (1.1, .4) -- (1.1, -.4) -- (1.9, -.4) -- (1.5, -.4); 

\path[-, draw=violet, opacity=0.5, thick, rounded corners]  (-.2, -2.1) -- (3.6, -2.1) -- (3.6, -.9) -- (-.6, -.9) -- (-.6, -2.1) -- (.6, -2.1) -- (-.2, -2.1); 

\path[-, draw=darkgreen, opacity=0.5, thick, rounded corners] (2, .5) -- (2, -.4) --  (.7, -2.2) -- (-.7, -2.2) -- (-.7, .6)  -- (2, .6) -- (2, .5) ; 

\path[-, draw=blue, opacity=0.5, thick, rounded corners]   (1, .5) -- (1, .6) --  (3.7, .6) -- (3.7, -2.2) --  (2.3, -2.2) -- (1, -.4) -- (1, .5)  ; 

\end{tikzpicture}
\end{center}
\end{minipage}\qquad\qquad\;\,\begin{minipage}{2.5in}\begin{center}\begin{tikzpicture}[->,>=stealth',shorten >=1pt,shorten <=1pt, auto,node
distance=2.5cm,semithick]
\tikzstyle{every state}=[fill=gray!20,draw=none,text=black]
\node[circle,draw=black!100, label=below:$$,inner sep=0pt,minimum size=.175cm] (0) at (0,.5) {{}};
\node[circle,draw=black!100, fill=brown!50, label=right:$\textcolor{black}{\Diamond a \wedge \Diamond \neg a}$,inner sep=0pt,minimum size=.175cm] (d) at (0,1.5) {{}};

\node[circle,draw=black!100, fill=red!50, label=below:$\textcolor{black}{\Box a}$,inner sep=0pt,minimum size=.175cm] (a) at (-2.25,1.5) {{}};
\node[circle,draw=black!100, fill=orange!50, label=below:$\;\;\;\;\textcolor{black}{\Box\neg a}$,inner sep=0pt,minimum size=.175cm] (Nc) at (2.25,1.5) {{}};

\node[circle,draw=black!100, fill=teal!50, label=left:$\textcolor{black}{a}$,inner sep=0pt,minimum size=.175cm] (b) at (-2.25,2.5) {{}};
\node[circle,draw=black!100, fill=cyan!50, label=right:$\textcolor{black}{\neg a}$,inner sep=0pt,minimum size=.175cm] (Nb) at (2.25,2.5) {{}};
\node[circle,draw=black!100, fill=darkgreen!50,label=above:$\textcolor{black}{\Diamond a}$,inner sep=0pt,minimum size=.175cm] (c) at (-2.25,3.5) {{}};

\node[circle,draw=black!100, fill=blue!50, label=above:$\;\;\;\textcolor{black}{\Diamond\neg a}$,inner sep=0pt,minimum size=.175cm] (Na) at (2.25,3.5) {{}};

\node[circle,draw=black!100, fill=violet!50, label=right:$\textcolor{black}{\Box a \vee\Box\neg a}\;\;$,inner sep=0pt,minimum size=.175cm] (Nd) at (0,3.5) {{}};
\node[circle,draw=black!100,label=above:$$,inner sep=0pt,minimum size=.175cm] (1) at (0,4.5) {{}};

\path (d) edge[-] node {{}} (c);
\path (d) edge[-] node {{}} (Na);
\path (d) edge[-] node {{}} (0);

\path (1) edge[-] node {{}} (Nd);
\path (Nd) edge[-] node {{}} (a);
\path (Nd) edge[-] node {{}} (Nc);
\path (1) edge[-] node {{}} (c);
\path (1) edge[-] node {{}} (Na);
\path (a) edge[-] node {{}} (b);
\path (b) edge[-] node {{}} (c);
\path (Nc) edge[-] node {{}} (Nb);
\path (Nb) edge[-] node {{}} (Na);
\path (a) edge[-] node {{}} (0);
\path (Nc) edge[-] node {{}} (0);

\path (a) edge[<->, dashed, gray] node {{}} (Na);

\path (b) edge[<->, dashed, gray] node {{}} (Nb);

\path (c) edge[<->, dashed, gray] node {{}} (Nc);

\path (d) edge[<->, dashed, gray] node {{}} (Nd);

\path (Na) edge[loop right,gray,double,>=stealth] node {{}} (Na);
\path (d) edge[loop left,gray,double,>=stealth] node {{}} (d);
\path (Nd) edge[loop left,gray,double,>=stealth] node {{}} (Nd);
\path (1) edge[loop left,gray,double,>=stealth] node {{}} (1);
\path (c) edge[loop left,gray,double,>=stealth] node {{}} (c);
\path (a) edge[loop left,gray,double,>=stealth] node {{}} (a);
\path (Nc) edge[loop right,gray,double,>=stealth] node {{}} (Nc);
\path (0) edge[loop left,gray,double,>=stealth] node {{}} (0);

\path (b) edge[->,gray,bend right,double,>=stealth] node {{}} (a);
\path (Nb) edge[->,gray,bend left,double,>=stealth] node {{}} (Nc);
\end{tikzpicture}
\end{center}
\end{minipage}
\end{center}

\caption{CA frame realizing a necessity ortholattice (with $\Diamond$ defined by $\neg\Box\neg$).}\label{FinalFig}
\end{figure}
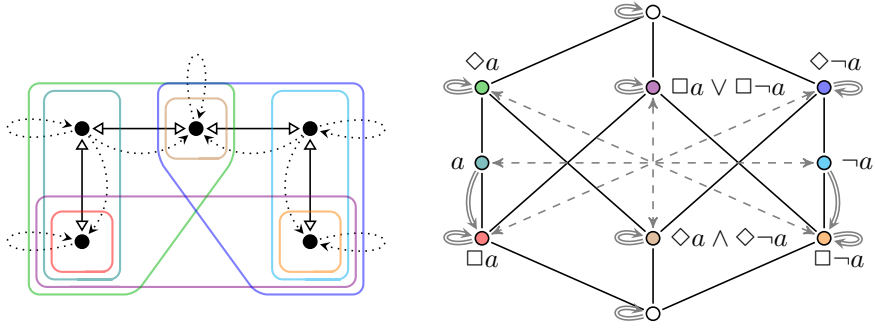

 We can extend our other representation results to the modal setting as well. Given $(L,\Box)$ and $P$ a separating set of pairs of elements of $L$ as in \S~\ref{ArbCompLat}, define a relation $R$ on $P$ by $(x,x')R(y,y')$ iff $xRy$ as defined in Proposition \ref{SpecialProp2}.

\begin{proposition}\label{ModalCompRep} If $(L,\Box)$ is a complete necessity lattice with $\Box$ completely multiplicative and $P$ a separating set of pairs of elements of $L$, then $(P,\comp,R)$ is a CA frame and  $(L,\Box)$ is isomorphic to $(\lat(P,\comp), \Box_R)$.
\end{proposition}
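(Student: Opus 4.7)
The plan is to show the isomorphism claim first and let the CA frame condition fall out as a corollary. By Proposition \ref{CompRep}(\ref{CompRep2}), the map $\varphi(a)=\{(x,y)\in P\mid x\leq a\}$ is already a lattice isomorphism from $L$ to $\lat(P,\comp)$, so it suffices to establish the single modal identity
\[\varphi(\Box a)=\Box_R\varphi(a)\qquad \text{for all }a\in L.\]
Once this is proved, every $c_\comp$-fixpoint has the form $\varphi(a)$ (by surjectivity of $\varphi$), and hence $\Box_R\varphi(a)=\varphi(\Box a)$ is again a $c_\comp$-fixpoint; this is exactly the condition for $(P,\comp,R)$ to be a CA frame, and $\varphi$ is then a necessity-lattice isomorphism.

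The forward inclusion of the identity is easy: if $(x,y)\in \varphi(\Box a)$, then $x\leq\Box a$, so $a$ lies in $S_x:=\{b\in L\mid x\leq\Box b\}$. For any $(z,z')$ with $(x,y)R(z,z')$, by definition of $R$ we have $z\leq\bigwedge S_x\leq a$, so $(z,z')\in\varphi(a)$. The reverse inclusion is where the two standing hypotheses---complete multiplicativity of $\Box$ and separation of $P$---really do the work, and this is the main obstacle. Given $(x,y)\in\Box_R\varphi(a)$, set $c=\bigwedge S_x$. I will first show $c\leq a$: if instead $c\not\leq a$, the first separating condition (Definition \ref{Good}(\ref{Good2})) supplies $(z,z')\in P$ with $z\leq c$ and $z\not\leq a$; but $z\leq c$ means $xRz$, hence by hypothesis $z\leq a$, a contradiction. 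Then complete multiplicativity gives $\Box c=\bigwedge\{\Box b\mid b\in S_x\}$, and since $x\leq\Box b$ for every $b\in S_x$, we conclude $x\leq\Box c\leq \Box a$ (monotonicity of $\Box$ follows from multiplicativity), i.e.\ $(x,y)\in\varphi(\Box a)$.

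Two small bookkeeping remarks close the argument. First, the definition of $R$ on $P$ depends only on first coordinates, so it is well-defined and makes sense on a set of pairs; nothing in the argument above actually refers to second coordinates except insofar as $(z,z')$ is required to lie in $P$. Second, the invocation of Definition \ref{Good}(\ref{Good2}) requires only that $P$ be separating as already assumed; no further closure properties of $P$ are needed. Thus the whole proof reduces to the two-line calculation above plus Proposition \ref{CompRep}, with the subtle point being the use of complete multiplicativity to turn the infimum $c=\bigwedge S_x$ into an element with $x\leq\Box c$.
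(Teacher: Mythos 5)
Your proposal is correct and follows essentially the same route as the paper: invoke Proposition \ref{CompRep} for the lattice isomorphism, verify $\varphi(\Box a)=\Box_R\varphi(a)$ using the separating condition of Definition \ref{Good}(\ref{Good2}) together with the complete multiplicativity of $\Box$ (which yields $x\leq\Box\bigwedge\{b\mid x\leq\Box b\}$), and then deduce the CA frame condition from surjectivity of $\varphi$. The only cosmetic difference is that you argue the reverse inclusion directly (first showing $\bigwedge S_x\leq a$ by contradiction, then applying complete multiplicativity), whereas the paper argues the contrapositive, applying complete multiplicativity first and then producing a separating witness.
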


Using Theorem \ref{NegThm} we can similarly represent complete necessity lattices equipped with a protocomplementation.

\begin{proposition}\label{BoxNeg} If $(L,\Box)$ is a complete necessity lattice with $\Box$ completely multiplicative and $\neg$ is a protocomplementation on $L$, then there is a CA frame $(P,\comp,R)$ such that $(L,\Box,\neg)$ is isomorphic to $(\lat(P,\comp), \Box_R, \neg_\comp)$.
\end{proposition}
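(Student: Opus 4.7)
The plan is to combine the constructions in Theorem \ref{NegThm} and Proposition \ref{ModalCompRep} by using one and the same separating set of pairs. Specifically, let $P=\{(a,b)\mid a,b\in L,\, a\not\leq b,\,\neg a\leq b\}$, the separating set already built in the proof of Theorem \ref{NegThm}, with $\comp$ defined on $P$ by $(a,b)\comp(c,d)$ iff $c\not\leq b$. On this same $P$, I would then place the accessibility relation $R$ prescribed by Proposition \ref{ModalCompRep}, namely $(x,x')R(y,y')$ iff $y\leq\bigwedge\{a\in L\mid x\leq \Box a\}$ (noting that the meet exists because $L$ is complete).

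First I would invoke the proof of Theorem \ref{NegThm} to recall that $P$ is separating, so that Proposition \ref{CompRep}(\ref{CompRep2}) yields a lattice isomorphism $\varphi\colon L\to \lat(P,\comp)$ given by $\varphi(a)=\{(x,y)\in P\mid x\leq a\}$. Next I would apply Proposition \ref{ModalCompRep} to this very $P$: it guarantees that $(P,\comp,R)$ is a CA frame and that the same $\varphi$ satisfies $\varphi(\Box a)=\Box_R\varphi(a)$ for every $a\in L$, so $\varphi$ is an isomorphism from $(L,\Box)$ to $(\lat(P,\comp),\Box_R)$. Finally, I would appeal to the last part of the proof of Theorem \ref{NegThm}, which establishes $\varphi(\neg a)=\neg_\comp\varphi(a)$; this computation depends only on $P$ and $\comp$, not on $R$, so it carries over unchanged.

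Putting these three preservation facts together, the single map $\varphi$ is simultaneously a lattice, modal, and protocomplementation isomorphism, witnessing that $(L,\Box,\neg)\cong(\lat(P,\comp),\Box_R,\neg_\comp)$. The only thing worth being careful about is checking that the two earlier theorems truly use a compatible set-up: both rely on $P$ being separating in the sense of Definition \ref{Good} and on $\comp$ being the relation $(a,b)\comp(c,d)\iff c\not\leq b$, which is exactly what we have. There is no real obstacle beyond observing this alignment; the construction of $R$ depends only on $\Box$ and the underlying lattice structure, and the verification of $\varphi(\neg a)=\neg_\comp\varphi(a)$ depends only on $\neg$ and $\comp$, so the two additions are orthogonal and combine without interaction.
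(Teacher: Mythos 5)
Your proposal is correct and follows essentially the same route as the paper: take the separating set $P$ from the proof of Theorem \ref{NegThm}, apply Proposition \ref{ModalCompRep} to get the CA frame and the isomorphism preserving $\Box$, and reuse the computation from Theorem \ref{NegThm} showing $\varphi(\neg a)=\neg_\comp\varphi(a)$. Your added remark that the $\neg$-preservation argument depends only on $P$ and $\comp$ (not on $R$) is exactly the observation that makes the combination work.
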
 

Finally, define $\mathsf{FI}(L,\Box)$ just like $\mathsf{FI}(L)$ in \S~\ref{SubRepSec} but with the addition of a relation $R$ with $(F,I)R(F',I')$ if for all $a\in L$,  $\Box a\in F$ implies $a\in F'$.

\begin{proposition}\label{ModalEmbedding} For any necessity lattice $(L,\Box)$, $\mathsf{FI}(L,\Box)$ is a CA frame, and the map $a\mapsto\widehat{a}$ is \textnormal{(i)} an embedding of $(L,\Box)$ into $(\lat(\mathsf{FI}(L)),\Box_R)$ and \textnormal{(ii)} an isomorphism from $(L,\Box)$ to the subalgebra of $(\lat(\mathsf{FI}(L)),\Box_R)$ consisting of $c_\comp$-fixpoints that are compact open in the space $\mathsf{S}(L)$ \textnormal{(recall \S~\ref{SubRepSec})}.\end{proposition}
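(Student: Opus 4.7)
Since Theorem \ref{EmbedThm} already delivers, for the map $a \mapsto \widehat{a}$, both the lattice embedding of $L$ into $\lat(\mathsf{FI}(L))$ and its restriction to an isomorphism onto the compact-open $c_\comp$-fixpoints, the substantive new content of Proposition \ref{ModalEmbedding} is twofold: (i) showing that $\mathsf{FI}(L,\Box)$ is a CA frame, and (ii) showing that $\widehat{\Box a} = \Box_R \widehat{a}$ for every $a \in L$. I would prove (ii) first and then leverage it to establish (i).

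For the identity in (ii), the inclusion $\widehat{\Box a} \subseteq \Box_R \widehat{a}$ is immediate from the definition of $R$: if $\Box a \in F$ and $(F,I) R (F',I')$, then $a \in F'$ by the defining clause of $R$, so $(F',I') \in \widehat{a}$. For the reverse inclusion, given $\Box a \notin F$, I would exhibit an $R$-successor of $(F,I)$ lying outside $\widehat{a}$ by setting
\[
F' := \{b \in L : \Box b \in F\}, \qquad I' := {\downarrow a}.
\]
Multiplicativity of $\Box$ makes $F'$ a filter, and monotonicity of $\Box$ (a consequence of multiplicativity) gives disjointness $F' \cap I' = \varnothing$: were some $b \leq a$ to satisfy $\Box b \in F$, then $\Box b \leq \Box a$, and upward closure of $F$ would force $\Box a \in F$, contradicting the assumption. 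By construction $(F,I) R (F',I')$ and $a \notin F'$, as required.

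For the CA frame property (i), my plan is to combine (ii) with the facts that $\Box_R$ commutes with arbitrary intersections and that $c_\comp$-fixpoints are closed under intersection (a general property of closure-operator fixpoints). If every $c_\comp$-fixpoint $A$ of $\mathsf{FI}(L)$ can be written as $\bigcap_i \widehat{a_i}$ (the meet-density half of the canonical-extension picture for $\mathsf{FI}(L)$), then $\Box_R A = \bigcap_i \widehat{\Box a_i}$ is a $c_\comp$-fixpoint. An alternative route bypasses meet-density and directly verifies the sufficient first-order condition of Proposition \ref{FO}: given $z \comp_R x$ witnessed by $y \in R(x)$ with $z \comp y$, construct $x' \comp x$ by taking $F_{x'}$ to be any filter and $I_{x'}$ an ideal containing $\Box[I_z]$ and disjoint from $F_x$; any $x'' \compflip x'$ then has $F_{x''}$ disjoint from $\Box[I_z]$, so $\{b : \Box b \in F_{x''}\}$ extends to a filter disjoint from $I_z$, yielding the required $y'' \in R(x'')$ with $z \comp y''$.

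The main obstacle is (i). Both routes converge on the delicate question of controlling ideals built from $\Box$-images: the meet-density approach requires, for each $(F,I) \notin A$, isolating some $a \notin F$ with $A \subseteq \widehat{a}$ from the witness $(F'',I'')$ supplied by $(F,I) \notin c_\comp(A) = A$, while the direct approach requires the ideal generated by $\Box[I_z]$ to remain disjoint from $F_x$. The latter can fail without a primeness-style property of $F_x$, so the proof must either exploit additional structure of the witnesses $y, y'$ it is given, or refine the choice of $(F,I)$-witness until the generated ideal is genuinely disjoint from $F_x$.
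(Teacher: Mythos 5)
Your treatment of the identity $\widehat{\Box a}=\Box_R\widehat{a}$ matches the paper's and is correct (the set $\{b\in L\mid \Box b\in F\}$ is indeed a filter by multiplicativity and monotonicity, and disjointness from $\mathord{\downarrow}a$ follows exactly as you say). The gap is in part (i), which you explicitly leave open. Your first route is a dead end: $\lat(\mathsf{FI}(L))$ is the \emph{canonical extension} of $L$, in which the image of $L$ is not meet-dense in general --- only the closed elements (meets of $\widehat{a}$'s) are meet-dense, and an arbitrary $c_\comp$-fixpoint is a join of such closed elements, not itself an intersection of $\widehat{a}$'s. So you cannot write every fixpoint as $\bigcap_i\widehat{a_i}$.

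Your second route is the paper's, and the ``delicate question'' you flag at the end --- whether the ideal generated by $\Box[I_z]$ is disjoint from $F_x$ --- in fact never fails and needs no primeness of $F_x$. Write $z=(G,H)$, $y=(G',H')$, $x=(F,I)$; the hypotheses $z\comp y\in R(x)$ give $H\cap\{a\in L\mid \Box a\in F\}=\varnothing$. If some $b\in F$ lay in the ideal generated by $\{\Box a\mid a\in H\}$, then $b\leq\Box a_1\vee\dots\vee\Box a_n$ for some $a_1,\dots,a_n\in H$; since $\Box$ is monotone, $\Box a_1\vee\dots\vee\Box a_n\leq\Box(a_1\vee\dots\vee a_n)$, so $\Box(a_1\vee\dots\vee a_n)\in F$, whence $a_1\vee\dots\vee a_n\notin H$ --- contradicting that $H$ is an ideal containing each $a_i$. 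The point you missed is that the ideal $H$ absorbs the finite join, and multiplicativity (via monotonicity) collapses $\bigvee\Box a_i$ under a single $\Box$. Taking $x'=(F,I')$ with $I'$ this generated ideal, the rest goes through as you sketch: for any $x''=(F'',I'')\compflip x'$, the filter $J$ generated by $\{b\mid \Box b\in F''\}$ is disjoint from $H$ (if $a\in J$ then $b_1\wedge\dots\wedge b_n\leq a$ with each $\Box b_i\in F''$, so by multiplicativity $\Box a\in F''$, hence $\Box a\notin I'$, hence $a\notin H$), and $(J,H)$ is the required $y''\in R(x'')$ with $z\comp y''$. So the missing step is a short computation with finite joins and meets inside the ideal $H$ and the filter $F''$, not a refinement of witnesses or an appeal to extra structure.
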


\section{Conclusion}\label{Conclusion}

We have investigated three representations of complete lattices by means of compatibility frames, as well as modal analogues thereof. For future work, we hope to make progress on Question \ref{CharQues} and Conjecture \ref{Conj}, as well as applications of the representations studied here to lattice-based logics. For modal logic in particular, for reasons in \cite{HM2021} we would like to understand the lattice of modal orthologics, for which we hope that CA frames will be useful.

\subsection*{Acknowledgements}

I thank Peter Jipsen, Guillaume Massas, and the referees for \textit{Advances in Modal Logic} for helpful feedback.

\appendix 

\section{Appendix}

\subsection{Representation of $(L,\neg)$}\label{NegAppendix}

In Theorem \ref{RepThm1}(\ref{RepThm1-3}), the conclusion is that  $L$ is isomorphic to $\lat(\mathrm{V},\comp^\neg_\mathrm{V})$, not that $(L,\neg)$ is isomorphic to  $(\lat(\mathrm{V},\comp^\neg_\mathrm{V}), \neg_{\comp^\neg_\mathrm{V}})$. To represent $(L,\neg)$, we ask for a \textit{third suitability condition} on $\comp^\neg_\mathrm{V}$, namely that for $x\in\mathrm{V}$ and $y\in L$:
\begin{itemize}
\item[] if $x\not\leq \neg y$, then there is a $y_0\in\mathrm{V}$ such that $y_0\leq y$ and $y_0\comp^\neg_\mathrm{V}x$.
\end{itemize}

\begin{proposition}\label{EconRepNeg}  If $L$ is a complete lattice, $\mathrm{V}$ is a join-dense set of nonzero elements, $\neg$ is a protocomplementation on $L$ such that $L$ has compatible escape in $\mathrm{V}$ with $\neg$, and $\comp^\neg_\mathrm{V}$ satisfies the second and third suitability conditions, then $(L,\neg)$ is isomorphic to $(\lat(\mathrm{V},\comp^\neg_\mathrm{V}),\neg_{\comp^\neg_\mathrm{V}})$.
\end{proposition}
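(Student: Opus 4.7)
The plan is to leverage Theorem \ref{RepThm1}(\ref{RepThm1-3}) to get the underlying lattice isomorphism $\varphi: L \to \lat(\mathrm{V},\comp^\neg_\mathrm{V})$, $\varphi(b) = \{x \in \mathrm{V} \mid x \leq b\}$, for free: compatible escape yields the first suitability condition via Proposition \ref{CompEscapeWorks}, the second suitability condition is assumed, and hence Proposition \ref{SpecialRep}(\ref{SpecialRep2}) applies. So all that remains is to verify the single extra equation $\varphi(\neg b) = \neg_{\comp^\neg_\mathrm{V}} \varphi(b)$ for every $b \in L$, i.e., that the bijection $\varphi$ intertwines the two negations.

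Unpacking definitions, the target equation reads
\[
\{x \in \mathrm{V} \mid x \leq \neg b\} \;=\; \{x \in \mathrm{V} \mid \forall y \comp^\neg_\mathrm{V} x,\; y \not\leq b\}.
\]
For the $\subseteq$ direction, I would take $x \leq \neg b$ and any $y \comp^\neg_\mathrm{V} x$. By the very definition of $\comp^\neg_\mathrm{V}$, we have $x \not\leq \neg y$. If we had $y \leq b$, then antitonicity of the protocomplementation $\neg$ would give $\neg b \leq \neg y$, and with $x \leq \neg b$ this yields $x \leq \neg y$, a contradiction. Hence $y \not\leq b$, as required.

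For the $\supseteq$ direction, suppose $x \in \mathrm{V}$ satisfies $y \not\leq b$ for every $y \comp^\neg_\mathrm{V} x$, but toward contradiction assume $x \not\leq \neg b$. This is precisely the hypothesis of the third suitability condition (with the role of $y$ played by $b$), which hands us a $y_0 \in \mathrm{V}$ with $y_0 \leq b$ and $y_0 \comp^\neg_\mathrm{V} x$, contradicting the assumption. Therefore $x \leq \neg b$, completing the equality.

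I expect no serious obstacle here: the third suitability condition was tailor-made for the harder inclusion, and antitonicity of $\neg$ (built into the notion of protocomplementation) supplies the easier one. The only subtlety worth double-checking is that $\varphi(\neg b)$ is indeed a $c_{\comp^\neg_\mathrm{V}}$-fixpoint, but this is part of what Proposition \ref{SpecialRep}(\ref{SpecialRep1}) delivers once the first suitability condition is in force. Thus the proof is essentially a matter of packaging the existing results plus the two short direct arguments above.
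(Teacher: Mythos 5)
Your proposal is correct and follows essentially the same route as the paper: invoke Theorem \ref{RepThm1}(\ref{RepThm1-3}) for the underlying lattice isomorphism, then check $\varphi(\neg b)=\neg_{\comp^\neg_\mathrm{V}}\varphi(b)$ using antitonicity of the protocomplementation for one inclusion and the third suitability condition for the other. The paper's proof is word-for-word the same two-direction argument, so there is nothing to add.
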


\begin{proof} We need only add to the proof of Theorem \ref{RepThm1}(\ref{RepThm1-3}) that $\varphi(\neg a)=\neg_{\comp^\neg_\mathrm{V}} \varphi(a)$.  Suppose $x\in\varphi(\neg a)$, so $x\leq \neg a$, and $x'\comp^\neg_\mathrm{V} x$, so $x\not\leq \neg x'$. If $x'\leq a$, then $\neg a\leq\neg x'$, which with $x\leq \neg a$ implies $x\leq \neg x'$, contradicting the previous sentence. Thus, $x'\not\leq a$, so $x'\not\in\varphi(a)$. Hence $x\in \neg_{\comp^\neg_\mathrm{V}} \varphi(a)$. Conversely, suppose $x\in\mathrm{V}\setminus\varphi(\neg a)$, so $x\not\leq \neg a$. Then by the third suitability condition, there is an $a_0\in\mathrm{V}$ such that $a_0\leq a$, so $a_0\in\varphi(a)$, and $a_0\comp^\neg_\mathrm{V} x$, so $x\not\in \neg_{\comp^\neg_\mathrm{V}} \varphi(a)$.\end{proof}

\noindent It remains to be seen how broadly Proposition \ref{EconRepNeg} applies to lattices expanded with a protocomplementation. In the Jupyter notebook cited in \S~\ref{Intro}, we show there are such expansions $(L,\neg)$ that cannot be represented by any compatibility frame with $|X|\leq |L|$. Of course, Proposition \ref{EconRepNeg} applies to all ortholattices.

\begin{proposition}\label{EconRepNegOrtho} If $(L,\neg)$ is a complete ortholattice, and $\mathrm{V}$ is a join-dense set of nonzero elements, then $\comp^\neg_\mathrm{V}$ satisfies the third suitability condition.
\end{proposition}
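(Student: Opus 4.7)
The plan is to reduce the third suitability condition to join-density via the involutive and antitone properties of the orthocomplementation, using Proposition \ref{KeyDefApplies}(\ref{KeyDefApplies1}) to translate between $\comp^\neg_\mathrm{V}$ and the $\neg$-order.

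First, I would fix $x\in\mathrm{V}$ and $y\in L$ with $x\not\leq\neg y$. Applying the orthocomplementation to both sides and using antitonicity together with $\neg\neg y = y$, I would conclude that $y\not\leq \neg x$. Since $\mathrm{V}$ is join-dense in $L$, I may write $y = \bigvee C$ for some $C\subseteq \mathrm{V}$, and from $\bigvee C \not\leq \neg x$ I pick some $y_0 \in C$ with $y_0\not\leq \neg x$. By construction $y_0\leq y$, so $y_0$ is the required witness provided I can check $y_0\comp^\neg_\mathrm{V} x$.

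For the latter, I would invoke Proposition \ref{KeyDefApplies}(\ref{KeyDefApplies1}), which characterizes $\comp^\neg_\mathrm{V}$ on an ortholattice as $u\comp^\neg_\mathrm{V} v \Leftrightarrow v\not\leq \neg u$. Applying antitonicity and involutivity once more to $y_0\not\leq \neg x$ yields $x\not\leq \neg y_0$, hence $y_0\comp^\neg_\mathrm{V} x$, completing the verification of the third suitability condition.

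I do not expect any real obstacle here: the entire argument is a direct calculation using only that $\neg$ is an involutive antitone complementation and that $\mathrm{V}$ is join-dense, together with the ortholattice reduction of $\comp^\neg_\mathrm{V}$ already established. The one small point to be careful about is to cite Proposition \ref{KeyDefApplies}(\ref{KeyDefApplies1}) in the right direction (it gives the biconditional on $\mathrm{V}$), so that the witness $y_0\in\mathrm{V}$ genuinely satisfies $y_0\comp^\neg_\mathrm{V} x$ rather than merely $x\not\leq \neg y_0$.
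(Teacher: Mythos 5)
Your proof is correct and follows essentially the same route as the paper's: both reduce the claim to join-density plus Proposition \ref{KeyDefApplies}(\ref{KeyDefApplies1}). The only cosmetic difference is that you apply the contraposition $x\not\leq\neg y\Leftrightarrow y\not\leq\neg x$ twice, whereas the paper invokes the complete De Morgan law $\neg\bigvee\{z\in\mathrm{V}\mid z\leq y\}=\bigwedge\{\neg z\mid z\in\mathrm{V},\,z\leq y\}$ once; these are the same underlying computation.
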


\begin{proof} If $x\not\leq \neg y$, then $x\not\leq \neg \bigvee \{z\in \mathrm{V}\mid z\leq y\} =\bigwedge \{\neg z\mid z\in \mathrm{V},\, z\leq y\}$, so there is $y_0\in \mathrm{V}$ with $y_0\leq y$ and $x\not\leq \neg y_0$, so $y_0\comp^\neg_\mathrm{V}x$ by Proposition \ref{KeyDefApplies}(\ref{KeyDefApplies1}).\end{proof}

Recall that using the less economical representation of \S~\ref{ArbCompLat},  any complete lattice expanded with a protocomplementation is representable (Theorem \ref{NegThm}).

\subsection{Proofs for \S~\ref{SubRepSec}}\label{SubAppendix}

\begin{manualtheorem}{\ref{EmbedThm}} For any lattice \textnormal{(}resp.~bounded lattice\textnormal{)} $L$, the map $a\mapsto\widehat{a}$ is \textnormal{(i)} a lattice \textnormal{(}resp.~bounded lattice\textnormal{)} embedding  of $L$ into $\lat(\mathsf{FI}(L))$ and \textnormal{(ii)} an isomorphism from $L$ to the sublattice of $\lat(\mathsf{FI}(L))$ consisting of $c_\comp$-fixpoints that are compact open in the space $\mathsf{S}(L)$.\end{manualtheorem}

\begin{proof} First observe that for any $a\in L$,  $\widehat{a}$ is a $c_\comp$-fixpoint. It suffices to show that if $(F,I)\not\in \widehat{a}$, then there is an $(F',I')\comp (F,I)$ such that for all $(F'',I'')\compflip (F',F')$, we have $(F'',I'')\not\in \widehat{a}$. Suppose $(F,I)\not\in\widehat{a}$, so $a\not\in F$. Let $F'=F$ and $I'=\mathord{\downarrow}a$. Then $a\not\in F$ implies $F'\cap I'=\varnothing$. Thus, $(F',I')\in \mathsf{FI}(L)$. Now consider any $(F'',I'')$ such that $(F',I')\comp (F'',I'')$, so $I'\cap F''=\varnothing$. Then since $a\in I'$, we have $a\not\in F''$, so $(F'',I'')\not\in \widehat{a}$, as desired.

Next, the map $a\mapsto\widehat{a}$ is clearly injective:  if $a\not\leq b$, then $\mathord{\uparrow}a\cap \mathord{\downarrow}b=\varnothing$, so $(\mathord{\uparrow}a, \mathord{\downarrow}b)\in \mathsf{FI}(L)$, $(\mathord{\uparrow}a, \mathord{\downarrow}b)\in \widehat{a}$, and $(\mathord{\uparrow}a, \mathord{\downarrow}b)\not\in\widehat{b}$. If $L$ has bounds, then  $\widehat{1}=X$ and $\widehat{0}=\varnothing$. The map also preserves $\wedge$:  $\widehat{a\wedge b}=\{(F,I)\in X\mid a\wedge b\in F\}=\{(F,I)\in X\mid a,b\in F\}=\{(F,I)\in X\mid a\in F\}\cap \{(F,I)\in X\mid b\in F\}=\widehat{a}\cap \widehat{b}=\widehat{a}\wedge\widehat{b}$. 

To complete part (i), we show $\widehat{a\vee b}\subseteq\widehat{a}\vee\widehat{b}$, as the converse inclusion follows from meet preservation. Recall from Proposition \ref{ClosureLattice} that ${\widehat{a}\vee\widehat{b}}=c_\comp(\widehat{a}\cup\widehat{b})$. Suppose $(F,I)\in \widehat{a\vee b}$, so $a\vee b\in F$. Consider any ${(F',I')\comp (F,I)}$, so $I'\cap F=\varnothing$ and hence $a\vee b\not\in I'$. Then since $I'$ is an ideal,  $a\not\in I'$ or ${b\not\in I'}$. Without loss of generality, suppose $a\not \in I'$. Then setting $F''=\mathord{\uparrow}a$ and $I''=\mathord{\downarrow}c$ for any $c\in I'$, we have $(F'',I'')\in \mathsf{FI}(L)$ and $I'\cap F''=\varnothing$, so ${(F',I')\comp (F'',I'')}$, and $(F'',I'')\in\widehat{a}$. Thus, we have shown that for any $(F',I')\comp (F,I)$ there is an $(F'',I'')\compflip (F',I')$ with $(F'',I'')\in\widehat{a}\cup \widehat{b}$. Hence $(F,I)\in\widehat{a}\vee\widehat{b}$. 

For part (ii), we first show that $\widehat{a}$ is compact open. Since $\widehat{b}$'s form a basis, we need only show that if $\widehat{a}\subseteq \bigcup \{\widehat{b}_i\mid i\in I\}$, then there is a finite subcover. Suppose $a\not\leq b_i$ for some $i\in I$.  Then since $(\mathord{\uparrow}a,\mathord{\downarrow}b_i)\in \widehat{a}$, we have $(\mathord{\uparrow}a,\mathord{\downarrow}b_i)\in \widehat{b_j}$ for some $j\in I$, which implies $a\leq b_j$. Thus, $a\leq b_k$ for some $k\in I$, so $\widehat{a}\subseteq\widehat{b_k}$.

Finally, we show that $a\mapsto\widehat{a}$  is onto the set of compact open $c_\comp$-fixpoints. Suppose $U$ is compact open, so $U=\widehat{a_1}\cup\dots\cup\widehat{a_n}$ for some $a_1,\dots,a_n\in L$. Further suppose $U$ is a $c_\comp$-fixpoint, so $c_\comp(U)=U$. Where $d=a_1\vee\dots\vee a_n$, an obvious induction using part (i) and the fact that $c_\comp(c_\comp(A)\cup B)=c_\comp(A\cup B)$ for any $A,B\subseteq X$ yields $\widehat{d}= c_\comp(\widehat{a_1}\cup\dots\cup\widehat{a_n})$, so $\widehat{d}=c_\comp (U)= U$.\end{proof}

\begin{manualprop}{\ref{COFixProp}} For any space $X$ and reflexive binary relation $\comp$ on $X$, there is a lattice $L$ such that $(X,\comp)$ and $\mathsf{S}(L)$ are homeomorphic as spaces and isomorphic as relational frames iff the following conditions hold for all $x,y\in X$: \textnormal{(i)} $x=y$ iff $(\mathsf{F}(x),\mathsf{I}(x))=(\mathsf{F}(y),\mathsf{I}(y))$\textnormal{;} \textnormal{(ii)} $\mathsf{COFix}(X,\comp)$ is closed under $\cap$ and $\vee$ and forms a basis for $X$\textnormal{;} \textnormal{(iii)} each disjoint filter-ideal pair from  $\mathsf{COFix}(X,\comp)$ is $(\mathsf{F}(x), \mathsf{I}(x))$ for some $x\in X$\textnormal{;} \textnormal{(iv)} $x\comp y$ iff $\mathsf{I}(x)\cap \mathsf{F}(y)= \varnothing$.
\end{manualprop}
\begin{proof} Suppose there is such an $L$. It suffices to show $\mathsf{S}(L)$ satisfies (i)--(iv) in place of $(X,\comp)$. That (ii) holds for  $\mathsf{COFix}(\mathsf{S}(L))$ and $\mathsf{S}(L)$ follows from the proof of Theorem \ref{EmbedThm}. Let $\varphi$ be the isomorphism $a\mapsto\widehat{a}$ from $L$ to $\mathsf{COFix}(\mathsf{S}(L))$ in Theorem \ref{EmbedThm}, which induces a bijection $(F,I)\mapsto (\varphi[F],\varphi[I])$ between disjoint filter-ideal pairs of $L$ and of $\mathsf{COFix}(\mathsf{S}(L))$. Parts (i), (iii), and (iv) follow from the fact that for any $x=(F,I)\in \mathsf{S}(L)$, $(\varphi[F],\varphi[I])=(\mathsf{F}(x),\mathsf{I}(x))$. First, $\widehat{a}\in \varphi[F]$ iff $a\in F$ iff $x\in \widehat{a}$ iff $\widehat{a}\in \mathsf{F}(x)$. Second, $\widehat{a}\in \varphi[I]$ iff $a\in I$, and we claim that $a\in I$ iff $\widehat{a}\in \mathsf{I}(x)$, i.e., for all $(F',I')\compflip (F,I)$, $(F',I')\not\in \widehat{a}$, i.e., $a\not\in F'$. If $a\in I$ and $(F,I)\comp (F',I')$, then $a\not\in F'$ by definition of $\comp$. Conversely, if $a\not\in I$, let $F'=\mathord{\uparrow}a$ and $I'=I$; then $(F,I)\comp (F',I')$ and $a\in F'$. Now for (i), given $x,y\in \mathsf{S}(L)$ with $x=(F,I)$ and $y=(F',I')$, we have $(F,I)=(F',I')$ iff $(\varphi[F], \varphi[I]) = (\varphi[F'], \varphi[I'])$ iff  $(\mathsf{F}(x),\mathsf{I}(x))=(\mathsf{F}(y),\mathsf{I}(y))$; similarly, for (iv), $(F,I)\comp (F',I')$ iff $I\cap F'=\varnothing$ iff $\varphi[I]\cap \varphi[F']=\varnothing$ iff $\mathsf{I}(x)\cap \mathsf{F}(y)=\varnothing$. Finally, for (iii), if $(\mathscr{F},\mathscr{I})$ if a disjoint filter-ideal pair from $\mathsf{COFix}(\mathsf{S}(L))$, then setting $x=(\varphi^{-1}[\mathscr{F}],\varphi^{-1}[\mathscr{I}])$, we have $x\in \mathsf{S}(L)$ and $(\mathscr{F},\mathscr{I}) =(\mathsf{F}(x), \mathsf{I}(x))$.

Assuming $X$ satisfies the conditions, $\mathsf{COFix}(X,\comp)$ is a lattice, and we define a map $\epsilon$ from $(X,\comp)$ to $\mathsf{S}(\mathsf{COFix}(X,\comp))$  by $\epsilon(x)= (\mathsf{F}(x),\mathsf{I}(x))$. The proof that $\epsilon$ is a homeomorphism using (i)--(iii) is analogous to the proof of  Thm.~5.4(2) in \cite{BH2020}. That $\epsilon$ preserves and reflects $\comp$ follows from (iv).
\end{proof}

\subsection{Proofs for \S~\ref{ModalCase}}\label{NecAppendix}

\begin{manualprop}{\ref{FO}} If $(X,\comp)$ is a compatibility frame and $R$ a binary relation on $X$, then $(X,\comp,R)$ is a CA frame if the following condition holds: for all $x,z\in X$, if $z\comp_R x$, then $\exists x'\comp x$ $\forall x''\compflip x' $  \,$z\comp_R x''$. 

Moreover, if $(X,\comp)$ is a symmetric compatibility  frame, then $(X,\comp,R)$ is a CA frame \emph{if and only if} the stated condition holds.\end{manualprop}

\begin{proof}For the first part, we must show that $\Box_R A$ is a $c_\comp$-fixpoint for any $c_\comp$-fixpoint $A$. That is, we must show that
$ x\in X\setminus \Box_R A \Rightarrow  \exists x'\comp x\,\forall x''\compflip x' \; x''\not\in \Box_R A$.  Suppose $x\not\in \Box_R A$, so there is some $y\in R(x)$ with $y\not\in A$. Then since $A$ is a {$c_\comp$-fixpoint}, there is a ${z\comp y}$ such that ($\star$) for all $z'\compflip z$, we have $z'\not\in A$. Since $z\comp y\in R(x)$, by the condition we have $\exists x'\comp x\,\forall x''\compflip x' \,\exists y':\, z\comp y' \in R(x'')$. Now $z\comp y'$ implies $y'\not\in A$ by ($\star$), which with $y'\in R(x'')$ implies $x''\not\in\Box_R A$.

For the second part, assume $\comp$ is symmetric and $\Box_R$ sends $c_\comp$-fixpoints to $c_\comp$-fixpoints. Toward proving the condition, suppose $z\comp y\in R(x)$. Hence ${y\not\in \neg_\comp c_\comp(\{z\})}$, so  $x\not\in \Box_R \neg_\comp c_\comp(\{z\})$. Since by assumption $\Box_R \neg_\comp c_\comp(\{z\})$ is a $c_\comp$-fixpoint, it follows that there is an $x'\comp x$ such that for all $x''\compflip x'$, we have that $x''\not\in\Box_R\neg_\comp c_\comp(\{z\})$. Thus, there is a $w\in R(x'')$ such that $w\not\in \neg_\comp c_\comp(\{z\})$, so for some $w'\comp w$, we have $w'\in c_\comp(\{z\})$, which means that for all $w''\comp w'$, there is a $w'''\compflip w''$ such that $w'''\in\{z\}$, i.e., for all $w''\comp w'$, $w''\comp z$. Since $\comp$ is symmetric, from $w'\comp w$, we have $w\comp w'$, so setting $w''=w$, we conclude $w\comp z$, so $z\comp w$. Thus, $z\comp w\in R(x'')$, i.e., $z\comp_R x''$. 
\end{proof}

\begin{manualprop}{\ref{SpecialProp2}} Let $L$ be a complete lattice satisfying the hypotheses of Proposition \ref{SpecialRep}, so $L$ is isomorphic to $\lat(\mathrm{V},\comp)$ via  $b\mapsto\varphi(b)=\{x\in\mathrm{V}\mid x\leq b\}$. Given a completely multiplicative operation $\Box$ on $L$, define $R$ on $\mathrm{V}$ by $xRy$ iff  $y\leq \bigwedge\{a\in L\mid x\leq\Box a\}$. Then $(\mathrm{V},\comp,R)$ is a CA frame, and $\varphi$ is an isomorphism from $(L,\Box)$ to $(\lat(\mathrm{V},\comp),\Box_R)$.\end{manualprop}

\begin{proof} First, recall the key fact provided by complete multiplicativity of $\Box$: if $x\not\leq \Box b$, then $\bigwedge\{a\in L\mid x\leq\Box a\}\not\leq b$. For if $\bigwedge\{a\in L\mid x\leq\Box a\}\leq b$, then $\Box\bigwedge\{a\in L\mid x\leq\Box a\}\leq \Box b$ and hence $\bigwedge\{\Box a\in L\mid x\leq\Box a\}\leq \Box b$, so $x\leq\Box b$.

Now we show that for all $b\in L$, $\varphi(\Box b)=\Box_R\varphi(b)$. Suppose $x\in \varphi(\Box b)$, so $x\leq\Box b$. Then for all $y\in R(x)$, we have $y\leq b$ and hence $y\in \varphi(b)$. Thus, $x\in \Box_R\varphi(b)$. Now suppose $x\not\in\varphi(\Box b)$, so $x\not\leq\Box b$. Hence $\bigwedge\{a\in L\mid x\leq\Box a\}\not\leq b$ as above. Then since $\mathrm{V}$ is join-dense, there is a $y\in\mathrm{V}$ such that $y\leq \bigwedge\{a\in L\mid x\leq\Box a\}$ but $y\not\leq b$. Hence $xRy$ and $y\not\in\varphi(b)$, so $x\not\in \Box_R\varphi(b)$. 

Finally, we prove that $(\mathrm{V},\comp,R)$ is indeed a CA frame: if $B$ is a $c_{\comp}$-fixpoint of $(\mathrm{V},\comp)$, so is $\Box_R B$. By the surjectivity of $\varphi$, $B=\varphi(b)$ for some $b\in B$. Then $\Box_RB= \Box_R\varphi(b)=\varphi(\Box b)$, and $\varphi(\Box b)$ is a $c_{\comp}$-fixpoint, so we are done.\end{proof}

\begin{manualprop}{\ref{ModalCompRep}} If $(L,\Box)$ is a complete necessity lattice with $\Box$ completely multiplicative and $P$ a separating set of pairs of elements of $L$, then $(P,\comp,R)$ is a CA frame and  $(L,\Box)$ is isomorphic to $(\lat(P,\comp), \Box_R)$.
\end{manualprop}

\begin{proof}We showed in the proof of Proposition \ref{CompRep} that $a\mapsto\varphi(a)=\{(x,y)\in P\mid x\leq a\}$ is an isomorphism from $L$ to $\lat(P,\comp)$. It only remains to show that $\varphi(\Box b)=\Box_R\varphi(b)$. Suppose $(x,y)\in \varphi(\Box b)$, so $x\leq \Box b$. Then $(x,y)R(x',y')$ implies $x'\leq b$ and hence $(x',y')\in \varphi(b)$. Thus, $(x,y)\in\Box_R\varphi(b)$. Conversely, suppose $(x,y)\not\in \varphi(\Box b)$, so $x\not\leq \Box b$. Since $\Box$ is completely multiplicative, it follows that $\bigwedge \{a\in L\mid x\leq \Box a\}\not\leq b$. Then since $P$ is separating, there is some $(c,d)\in P$ with $c\leq \bigwedge \{a\in L\mid x\leq \Box a\}$ but $c\not\leq b$. Hence $(x,y)R(c,d)$ but $c\not\in \varphi(b)$, so $(x,y)\not\in \Box_R\varphi(b)$. Now the proof that $(P,\comp,R)$ is a CA frame is analogous to the last paragraph of the previous proof.\end{proof}

\begin{manualprop}{\ref{BoxNeg}} If $(L,\Box)$ is a complete necessity lattice with $\Box$ completely multiplicative and $\neg$ is a protocomplementation on $L$, then there is a CA frame $(P,\comp,R)$ such that $(L,\Box,\neg)$ is isomorphic to $(\lat(P,\comp), \Box_R, \neg_\comp)$.
\end{manualprop}
\begin{proof} Where $P=\{(a,b)\mid a,b\in L, a\neq 0,a\not\leq b,\neg a\leq b\}$, we showed in the proof of Theorem \ref{NegThm} that $P$ is separating. Hence by the proof of Proposition \ref{ModalCompRep}, $(L,\Box)$ is isomorphic to $(\lat(P,\comp), \Box_R)$ via the map $a\mapsto\varphi(a)=\{(x,y)\in P\mid x\leq a\}$. We also showed in the proof of Theorem \ref{NegThm} that $\varphi$ preserves the protocomplementation. Hence $(L,\Box,\neg)$ is isomorphic to $(\lat(P,\comp), \Box_R, \neg_\comp)$.\end{proof}

\begin{manualprop}{\ref{ModalEmbedding}} For any necessity lattice $(L,\Box)$, $\mathsf{FI}(L,\Box)$ is a CA frame, and the map $a\mapsto\widehat{a}$ is \textnormal{(i)} an embedding of $(L,\Box)$ into $(\lat(\mathsf{FI}(L)),\Box_R)$ and \textnormal{(ii)} an isomorphism from $(L,\Box)$ to the subalgebra of $(\lat(\mathsf{FI}(L)),\Box_R)$ consisting of $c_\comp$-fixpoints that are compact open in the space $\mathsf{S}(L)$.
\end{manualprop}

\begin{proof}In the proof of Proposition \ref{FO}, we showed that for any $(X,\comp)$ and binary relation $R$ on $X$, if the first-order condition in Proposition \ref{FO} holds, then $(X,\comp,R)$ is a CA frame. We claim $\mathsf{FI}(L,\Box)$ satisfies the condition. Suppose $(G,H)\comp (G',H')\in R((F,I))$, which implies $H\cap \{a\in L\mid \Box a\in F\}=\varnothing$. Then where $F'=F$ and $I'$ is the ideal generated by $\{\Box a\mid a\in H\}$, we claim $F'\cap I'=\varnothing$, so $(F',I')\comp (F,I)$.  For if $b\in F'\cap I'$, then for some  $a_1,\dots,a_n\in H$, $b\leq \Box a_1\vee\dots\vee \Box a_n$, which implies $b\leq \Box(a_1\vee\dots\vee a_n)$, so $\Box(a_1\vee\dots\vee a_n)\in F$, whence $a_1\vee\dots\vee a_n\not\in H$, contradicting $a_1,\dots,a_n\in H$. Now suppose $(F',I')\comp (F'',I'')$, so $I'\cap F''=\varnothing$. Let $J=\{b\in L\mid \Box b\in F''\}$, which is a filter. We claim $J\cap H=\varnothing$. For if $a\in J$, then  $\Box a\in F''$, so $\Box a\not\in I'$, whence $a\not\in H$. Thus, $(G,H)\comp (J,H)\in R((F'',I''))$, as desired.

Now for (i)--(ii), we need only add to Theorem \ref{EmbedThm} that $\widehat{\Box a}=\Box_R\widehat{a}$. Suppose $(F,I)\in \widehat{\Box a}$, so $\Box a\in F$. Then if $(F,I)R(F',I')$, we have $a\in F'$ and hence $(F',I')\in\widehat{a}$. Thus, $(F,I)\in\Box_R\widehat{a}$. Conversely, suppose $(F,I)\not\in \widehat{\Box a}$, so $\Box a\not\in F$. Let $F'$ be the filter generated by $\{b\in L\mid \Box b\in F\}$ and $I'=\mathord{\downarrow}a$. Since $\Box a\not\in F$ and $\Box$ is multiplicative, it follows that $F'\cap I'=\varnothing$. Hence $(F',I')\in \mathsf{FI}(L,\Box)$, ${(F',I')\not\in\widehat{a}}$, and  by construction of $F'$, $(F,I)R(F',I')$. Thus $(F,I)\not\in\Box_R\widehat{a}$.\end{proof}

\newpage

%% Bibliography
%% Make sure to use the bibliographystyle aiml22.
\bibliographystyle{aiml22}
\bibliography{aiml22}

\end{document}